          \newtheorem{theorem}{Theorem}[section]
      \newtheorem{definition}[theorem]{Definition}
      \newtheorem{proposition}[theorem]{Proposition}
      \newtheorem{corollary}[theorem]{Corollary}
      \newtheorem{lemma}[theorem]{Lemma}
      \newcommand{\BB}{{\mathbb B}}
      \newcommand{\CC}{{\mathbb C}}
      \newcommand{\NN}{{\mathbb N}}
      \newcommand{\ZZ}{{\mathbb Z}}
      \newcommand{\DD}{{\mathbb D}}
      \newcommand{\RR}{{\mathbb R}}
      \newcommand{\FF}{{\mathbb F}}
      \newcommand{\TT}{{\mathbb T}}
      \newcommand{\cA}{{\mathcal A}}
      \newcommand{\cC}{{\mathcal C}}
      \newcommand{\cD}{{\mathcal D}}
      \newcommand{\cE}{{\mathcal E}}
      \newcommand{\cF}{{\mathcal F}}
      \newcommand{\cG}{{\mathcal G}}
      \newcommand{\cH}{{\mathcal H}}
      \newcommand{\cK}{{\mathcal K}}
       \newcommand{\cJ}{{\mathcal J}}
      \newcommand{\cM}{{\mathcal M}}
      \newcommand{\cP}{{\mathcal P}}
      \newcommand{\cT}{{\mathcal T}}
      \newcommand{\cX}{{\mathcal X}}
      \newdimen\expt
      \def\boxit#1{\setbox0\hbox{$\displaystyle{#1}$}
            \hbox{\lower.4\expt
       \hbox{\lower3\expt\hbox{\lower\dp0
            \hbox{\vbox{\hrule height.4\expt
       \hbox{\vrule width.4\expt\hskip3\expt
            \vbox{\vskip3\expt\box0\vskip2\expt}%
       \hskip3\expt\vrule width.4\expt}\hrule height.4\expt}}}}}}
\begin{document}
       \pagestyle{myheadings}
      \markboth{ Gelu Popescu}{Multi-Toeplitz operators on noncommutative hyperballs}

      \title [Brown-Halmos characterization of multi-Toeplitz operators]
      { Brown-Halmos characterization of multi-Toeplitz operators associated with noncommutative poly-hyperballs
      }
        \author{Gelu Popescu}
\date{January 21, 2019}
     \thanks{Research supported in part by  NSF grant DMS 1500922}
       \subjclass[2010]{Primary: 47B35; 47A62,   Secondary: 47A56; 47B37.
   }
      \keywords{Multivariable operator theory, Multi-Toeplitz operator, Noncommutative domain, Fock space,  Bergman space, Pluriharmonic function.
 }
      \address{Department of Mathematics, The University of Texas
      at San Antonio \\ San Antonio, TX 78249, USA}
      \email{\tt gelu.popescu@utsa.edu}

\begin{abstract}
The {\it noncommutative $m$-hyperball}, $m\in \NN$,   is defined by
$$
\cD_n^m(\cH):=\left\{ X:=(X_1,\ldots, X_n)\in B(\cH)^n: \ (id-\Phi_{X})^k(I)\geq 0 \  \text{\rm  for } 1\leq k\leq m\right\},
$$
where $\Phi_{X}: B(\cH)\to B(\cH)$ is the completely positive map given by $\Phi_{X}(Y):=\sum_{i=1}^n X_i Y X_i^*$ for  $Y\in B(\cH)$.
   Its right universal model is  an $n$-tuple $\Lambda=(\Lambda_1,\ldots, \Lambda_n)$ of  {\it weighted right creation  operators} acting on the full Fock space  $F^2(H_n)$ with $n$ generators. We prove that an operator $T\in B(F^2(H_n))$ is a multi-Toeplitz operator with free pluriharmonic  symbol on $\cD_n^m(\cH)$  if and only if it satisfies the Brown-Halmos type equation
   $$
   \Lambda^{\prime*} T\Lambda^\prime=\bigoplus_{i=1}^n \left(\sum_{j=0}^{m-1} (-1)^j   \left(\begin{matrix} m\\j+1
\end{matrix}\right) \sum_{\alpha\in \FF_n^+, |\alpha|=j} \Lambda_\alpha T \Lambda_\alpha^*\right),
$$
 where $\Lambda'$ is the Cauchy dual of $\Lambda$ and $\FF_n^+$ is the free unital semigroup with $n$ generators. This is a noncommutative multivariable analogue of Louhichi and Olofsson  characterization of Toeplitz operators with harmonic symbols on the weighted Bergman space $A_m(\DD)$,  as well as Eschmeier and Langend\" orfer extension to the unit ball of $\CC^n$.

 All our results are proved in the more general setting of  {\it noncommutative poly-hyperballs} ${\bf D_n^m}(\cH)$,
  ${\bf n,m}\in \NN^k$,
and are used to characterize the bounded free $k$-pluriharmonic functions with operator coefficients on poly-hyperballs  and to solve the associated  Dirichlet
extension problem. In particular, the results hold for  the  reproducing kernel Hilbert space with kernel
 $$
 \kappa_{\bf m}(z,w):=\prod_{i=1}^k \frac{1}{(1-\bar z_i w_i)^{m_i}},\qquad z,w\in \DD^k,
  $$
  where $m_i\geq 1$. This includes the  Hardy space, the Bergman space, and the weighted Bergman space  over the polydisk.
\end{abstract}

      \maketitle

\bigskip

\section*{Introduction}

Let $H^2(\DD)$ be the Hardy space of all analytic functions on the open unit disc
$\DD:=\{z\in \CC: \ |z|<1\}$ with square-sumable coefficients. An operator $T\in B(H^2(\DD))$ is called a Toeplitz  operator if
$$Tf=P_+(\varphi f),\qquad f\in H^2(\TT),
$$
 for some $\varphi \in L^\infty(\TT)$, where $P_+$ is the orthogonal projection of the Lebesgue space $L^2(\TT)$ onto the Hardy space $H^2(\TT)$, which is identified with $H^2(\DD)$.
 Brown and Halmos \cite{BH} proved that a necessary and sufficient condition that an operator on the Hardy space $H^2(\DD)$ be a Toeplitz operator is that
  $$
  S^*TS=T,
  $$
  where $S$ is the unilateral shift on $H^2(\DD)$.
 The study of Toeplitz operators  originates  with O.Toeplitz \cite{T}, and was extended to Hilbert spaces of holomorphic functions  on the unit disc (see \cite{HKZ}) such as the Bergman space and weighted Bergman space,
 and also to higher dimensional setting  involving holomorphic functions in several complex variables on various classes of domains in $\CC^n$ (see Upmeier's book \cite{U}). We refer the reader to \cite{BS}, \cite{Dou}, \cite{RR}, and \cite{HKZ}  for a   comprehensive account on Toeplitz operators.

In \cite{LO}, Louhichi and Olofsson obtained a Brown-Halmos type characterization of Toeplitz operators with harmonic symbols on the weighted Bergman space $A_m(\DD)$,
 the Hilbert space of all analytic functions on the unit disc $\DD$ with
$$
\|f\|^2:=\frac{m-1}{\pi}\int_\DD|f(z)|^2 (1-|z|^2)^{m-2}dz<\infty.
$$
    They proved that an operator $T\in B(A_m(\DD))$ is a Toeplitz operator   with bounded harmonic symbol on $\DD$ if and only if $T$ satisfies the identity
 $$
 M^{\prime  *}_z TM_z'=\sum_{k=0}^{m-1}(-1)^k \left(\begin{matrix} m\\k+1
\end{matrix}\right) M_z^kTM_z^{*k},
$$
 where $M_z':=M_z(M_z^* M_z)^{-1}$ is the Cauchy dual of the multiplication operator $M_z$ on $A_m(\DD)$.
 Their result was recently extended by Eschmeier and Langend\" orfer  \cite{EL} to the analytic functional  Hilbert space $H_m(\BB)$ on the unit ball $\BB\subset \CC^n$ given by the reproducing kernel $\kappa_m(z,w):=\left(1-\left<z,w\right>\right)^{-m}$ for  $z,w\in \BB$, where  $m\geq 1$.

  A study of  unweighted multi-Toeplitz operators on the full Fock  space  $F^2(H_n)$ with $n$ generators   was initiated in \cite{Po-multi}, \cite{Po-analytic} and has had an important impact in multivariable operator theory  and the structure of free semigroups algebras (see \cite{DP2}, \cite{DKP}, \cite{DLP}, \cite{Po-entropy}, \cite{Po-pluriharmonic}, \cite{Ken1}, \cite{Ken2}).

Recently \cite{Po-Toeplitz}, we initiated the study of weighted  multi-Toeplitz operators associated with noncommutative regular domains    ${\cD}_f^m(\cH)$  generated by an arbitrary positive regular free holomorphic functions  $f$ in a neighborhood of the origin.
  This was  accompanied by the study of their symbols which are  free pluriharmonic functions on the radial part of
  ${\cD}_f^m(\cH)$.

  The goal of the present paper is to  provide a Brown-Halmos  type characterizations of  the weighted multi-Toeplitz operators associated with noncommutative poly-hyperballs and to use the results to  characterize the bounded free $k$-pluriharmonic functions with operator coefficients on poly-hyperballs  and to solve the associated  Dirichlet
extension problem.

In Section 1, we recall from \cite{Po-domains-models},  \cite{Po-domains}, \cite{Po-Berezin2}, and  \cite{Po-Berezin1} some basic facts concerning the noncommutative poly-hyperballs, their universal models, and the associated noncommutative Berezin transforms. These preliminaries are needed throughout the paper.

In Section 2, we introduce the multivariable Brown-Halmos type equations
$$
{\bf \Lambda}_i^{\prime *} X{\bf \Lambda}_i'=
{\bf diag}_{n_i}\left( \sum_{j=0}^{m_i-1} (-1)^j \left(\begin{matrix}  m_i\\j+1
\end{matrix}\right)\sum_{\beta\in \FF_{n_i}^+, |\beta|=j} {\bf \Lambda}_{i,\beta}X {\bf \Lambda}_{i,\beta}^*\right), \qquad i\in \{1,\ldots, k\},
$$
over the algebra of all  bounded linear operator on the tensor product  $\otimes_{i=1}^k F^2(H_{n_i})$ of full Fock spaces and ${\bf \Lambda}=({\bf \Lambda}_1,\ldots, {\bf \Lambda}_k)$ is the right universal model for the poly-hyperball
${\bf D_n^m}$.  Any solution of this equation is said to have the Brown-Halmos property (see Definition \ref{BH} for details). The main result of this section (see Theorem \ref{main}) provides a complete description of all solutions of the equations above. We prove that $T\in B(\otimes_{i=1}^k F^2(H_{n_i}))$ satisfies  the Brown-Halmos property
if and only if there is a bounded free $k$-pluriharmonic  function $F$ on the radial  part of the poly-hyperball $
{\bf D_{n}^m}$ such that
 $$
 T=\text{\rm SOT-}\lim_{r\to 1} F(r{\bf W}),
$$
where ${\bf W}=({\bf W}_1,\ldots, {\bf W}_k)$ is the left universal model of the poly-hyperball.

In Section 3, we introduce the weighted multi-Toeplitz operators which are associated with the poly-hyperball
${\bf D_{n}^m}$ and are acting on the tensor product $\otimes_{i=1}^k F^2(H_{n_i})$. The main result of this section
(see Theorem \ref{main2})  shows that  the weighted multi-Toeplitz operators are precisely those satisfying the Brown-Halmos equations. We also prove that each  weighted multi-Toeplitz operator $T$ has a unique  formal Fourier representation
$$
\varphi({\bf W}, {\bf W}^*):=\sum_{(\boldsymbol \alpha, \boldsymbol \beta)\in \cJ}a_{   (\boldsymbol \alpha, \boldsymbol \beta)} {\bf W}_{\boldsymbol\alpha}{\bf W}_{\boldsymbol\beta}^*, \qquad  a_{   (\boldsymbol \alpha, \boldsymbol \beta)}\in \CC,
$$
which can be viewed as a noncommutative symbol and can be used to recover the operator $T$. Conversely, given a formal series $\varphi({\bf W}, {\bf W}^*)$ of the form above, we provide necessary and sufficient conditions on $\varphi({\bf W}, {\bf W}^*)$ to be  the formal Fourier representation  of a weighted multi-Toeplitz operator (see Theorem \ref{Fourier}).

In Section 4,  we prove that the   bounded free $k$-pluriharmonic  functions on the radial  part of the poly-hyperball
${\bf D_{n}^m}$ are precisely those  that are noncommutative Berezin transforms of the weighted multi-Toeplitz operators. In this setting, we solve the Dirichlet extension problem (see Theorem \ref{Dirichlet}).

We should mention that our results are presented in the more general setting of weighted multi-Toeplitz matrices with operator-valued entries and  free $k$-pluriharmonic functions  with operator-valued coefficients.

\bigskip

\section{Noncommutative poly-hyperballs and  universal models }

This section of preliminaries contains basic  facts concerning the noncommutative poly-hyperballs, their universal models, and the associated noncommutative Berezin transforms.

Given two $k$-tuples ${\bf m}:=(m_1,\ldots, m_k)$ and ${\bf n}:=(n_1,\ldots, n_k)$  with $m_i,n_i\in  \NN:=\{1,2,\ldots\}$,  we associate with each   ${\bf X}:=(X_1,\ldots, X_k)\in B(\cH)^{n_1}\times\cdots \times B(\cH)^{n_k}$ with $X_i:=(X_{i,1},\ldots, X_{i, n_i})$  the {\it defect mapping} ${\bf \Delta_{X}^m}:B(\cH)\to  B(\cH)$ defined by
$$
{\bf \Delta_{X}^m}:=\left(id -\Phi_{X_1}\right)^{m_1}\circ \cdots \circ\left(id -\Phi_{X_k}\right)^{m_k},
$$
where $\Phi_{X_i}: B(\cH) \to B(\cH)$ is  the completely positive map given by $\Phi_{X_i}(X):=\sum_{j=1}^{n_i} X_{i,j}Y X_{i,j}^*$.
We denote by $B(\cH)^{n_1}\times_c\cdots \times_c B(\cH)^{n_k}$
   the set of all tuples  ${\bf X}=({ X}_1,\ldots, { X}_k)\in B(\cH)^{n_1}\times\cdots \times B(\cH)^{n_k}$      with the property that, for every $p,q\in \{1,\ldots, k\}$, $p\neq q$, the entries of ${ X}_p$ are commuting with the entries of ${ X}_q$. Note that the operators $X_{i,1},\ldots, X_{i,n_i}$ are not necessarily commuting.

The {\it noncommutative poly-hyperball}  ${\bf D_n^m}$ is defined by its representations on  Hilbert spaces $\cH$, i.e.
$$
{\bf D_n^m}(\cH):=\left\{ {\bf X}=(X_1,\ldots, X_k)\in B(\cH)^{n_1}\times_c\cdots \times_c B(\cH)^{n_k}: \ {\bf \Delta_{X}^p}(I)\geq 0 \ \text{ for }\ {\bf 0}\leq {\bf p}\leq {\bf m}\right\}.
$$
For each $i\in \{1,\ldots, k\}$,
let $\FF_{n_i}^+$ be the unital free semigroup on $n_i$ generators
$g_{1}^i,\ldots, g_{n_i}^i$ and the identity $g_{0}^i$.  The length of $\alpha\in
\FF_{n_i}^+$ is defined by $|\alpha|:=0$ if $\alpha=g_0^i$  and
$|\alpha|:=p$ if
 $\alpha=g_{j_1}^i\cdots g_{j_p}^i$, where $j_1,\ldots, j_p\in \{1,\ldots, n_i\}$.
  Let $H_{n_i}$ be
an $n_i$-dimensional complex  Hilbert space with orthonormal basis
$e_1^i,\dots,e_{n_i}^i$.
  We consider the full Fock space  of $H_{n_i}$ defined by
$$F^2(H_{n_i}):=\bigoplus_{p\geq 0} H_{n_i}^{\otimes p},$$
where $H_{n_i}^{\otimes 0}:=\CC 1$ and $H_{n_i}^{\otimes p}$ is the
(Hilbert) tensor product of $p$ copies of $H_{n_i}$. Set $e_\alpha^i :=
e^i_{j_1}\otimes \cdots \otimes e^i_{j_p}$ if
$\alpha=g^i_{j_1}\cdots g^i_{j_p}\in \FF_{n_i}^+$
 and $e^i_{g^i_0}:= 1\in \CC$.
It is  clear that $\{e^i_\alpha:\alpha\in\FF_{n_i}^+\}$ is an orthonormal
basis of $F^2(H_{n_i})$.

For each $i\in \{1,\ldots, k\}$, let
   $b_{i,g_0^i}^{(m_i)} :=1$, and
\begin{equation}
\label{b-al2}
 b_{i,\beta_i}^{(m_i)}: =
\left(\begin{matrix} |\beta_i|+m_i-1\\m_i-1
\end{matrix}\right)  \qquad
\text{ if } \ \beta_i\in \FF_{n_i}^+,  |\beta_i|\geq 1.
\end{equation}
The diagonal operators  $D_{i,j}:F^2(H_{n_i})\to F^2(H_{n_i})$   are defined  by setting
$$
D_{i,j}e^i_\alpha:=\sqrt{\frac{b_{i,\alpha}^{(m_i)}}{b_{i, g_j\alpha }^{(m_i)}}}
e^i_\alpha=\sqrt{\frac{|\alpha|+1}{|\alpha|+m_i}} e^i_\alpha\qquad
 \alpha\in \FF_{n_i}^+,
$$
where
$\{e^i_\alpha\}_{\alpha\in \FF_{n_i}^+}$ is the orthonormal basis of the full Fock space $F^2(H_{n_i})$.
    As in \cite{Po-domains}, we associate with the noncommutative $m_i$-hyperball
$$
\cD_{n_i}^{m_i}(\cH):=\{ X_i\in B(\cH)^{n_i}: \  (id-\Phi_{X_i})^{m_i}(I)\geq 0 \text{ for }  0\leq p\leq m_i\}
$$
the {\it weighted left creation  operators} $W_{i,j}:F^2(H_{n_i})\to
F^2(H_{n_i})$ defined
by $W_{i,j}:=S_{i,j}D_{ij}$, where
 $S_{i,1},\ldots, S_{i,n_i}$ are the left creation operators on the full
 Fock space $F^2(H_{n_i})$, i.e.
      $$
       S_{i,j} f:=e_j^i \otimes f, \qquad  f\in F^2(H_{n_i}),\  j\in \{1,\ldots,n_i\}.
      $$
      If $\alpha=g_{j_1}^i\cdots g_{j_p}^i\in\FF_{n_i}^+$, we set $W_{i,\alpha}:=W_{i,j_1}\cdots W_{i,j_p}$, and  $W_{i,g_0^i}:=I$.
       A simple calculation reveals that
\begin{equation*}
W_{i,\beta} e^i_\gamma= \frac {\sqrt{b_{i,\gamma}^{(m_i)}}}{\sqrt{b_{i,\beta
\gamma}^{(m_i)}}} e^i_{\beta \gamma} \quad \text{ and }\quad W_{i,\beta}^*
e^i_\alpha =\begin{cases} \frac
{\sqrt{b_{i,\gamma}^{(m_i)}}}{\sqrt{b_{i,\alpha}^{(m_i)}}}e^i_\gamma& \text{ if
}
\alpha=\beta\gamma \\
0& \text{ otherwise }
\end{cases}
\end{equation*}
 for every $\alpha, \beta \in \FF_{n_i}^+$.
       For each $i\in \{1,\ldots, k\}$ and $j\in \{1,\ldots, n_i\}$, we
define the operator ${\bf W}_{i,j}$ acting on the tensor Hilbert space
$F^2(H_{n_1})\otimes\cdots\otimes F^2(H_{n_k})$ by setting
$${\bf W}_{i,j}:=\underbrace{I\otimes\cdots\otimes I}_{\text{${i-1}$
times}}\otimes W_{i,j}\otimes \underbrace{I\otimes\cdots\otimes
I}_{\text{${k-i}$ times}}.
$$
 According to \cite{Po-Berezin1}, if  ${\bf W}_i:=({\bf W}_{i,1},\ldots,{\bf W}_{i,n_i})$, then
 $$
 (id-\Phi_{{\bf W}_1})^{m_1}\circ\cdots \circ (id-\Phi_{{\bf W}_k})^{m_k}(I)={\bf P}_\CC,
 $$
  where ${\bf P}_\CC$ is the
 orthogonal projection from $\otimes_{i=1}^k F^2(H_{n_i})$ onto $\CC 1\subset \otimes_{i=1}^k F^2(H_{n_i})$, where $\CC 1$ is identified with $\CC 1\otimes\cdots \otimes \CC 1$.
     Moreover,  ${\bf W}:=({\bf W}_1,\ldots, {\bf W}_k)$ is  a pure $k$-tuple, i.e. $\Phi_{W_i}^p(I)\to 0$ strongly as $p\to \infty$,  in the
noncommutative poly-hyperball $ {\bf D_n^m}(\otimes_{i=1}^kF^2(H_{n_i}))$.

The {\it noncommutative Berezin kernel} associated with any element
   ${\bf X}=\{X_{i,j}\}$ in the noncommutative poly-hyperball ${\bf D_n^m}(\cH)$ is the operator
   $${\bf K_{X}}: \cH \to F^2(H_{n_1})\otimes \cdots \otimes  F^2(H_{n_k}) \otimes  \overline{{\bf \Delta_{X}^m}(I) (\cH)}$$
   defined by
   $$
   {\bf K_{X}}h:=\sum_{\beta_i\in \FF_{n_i}^+, i=1,\ldots,k}
   \sqrt{b_{1,\beta_1}^{(m_1)}}\cdots \sqrt{b_{k,\beta_k}^{(m_k)}}
   e^1_{\beta_1}\otimes \cdots \otimes  e^k_{\beta_k}\otimes {\bf \Delta_{X}^m}(I)^{1/2} X_{1,\beta_1}^*\cdots X_{k,\beta_k}^*h,
   $$
where   the defect operator is given  by
$$
{\bf \Delta_{X}^m}(I)  :=(id-\Phi_{X_1})^{m_1}\circ\cdots \circ (id-\Phi_{X_k})^{m_k}(I).
$$
The noncommutative Berezin kernel associated with a $k$-tuple
${\bf X}=({ X}_1,\ldots, { X}_k)$ in the noncommutative poly-hyperball ${\bf D_n^m}(\cH)$ has the following properties.
\begin{enumerate}
\item[(i)] ${\bf K_{X}}$ is a contraction  and
$$
{\bf K_{X}^*}{\bf K_{X}}=
\lim_{q_k\to\infty}\ldots \lim_{q_1\to\infty}  (id-\Phi_{X_k}^{q_k})\circ\cdots \circ (id-\Phi_{X_1}^{q_1})(I).
$$
where the limits are in the weak  operator topology.
\item[(ii)]  If ${\bf X}$ is {\it pure},   i.e. $\Phi_{X_i}^p(I)\to 0$ strongly as $p\to \infty$, then
$${\bf K_{X}^*}{\bf K_{X}}=I_\cH. $$
\item[(iii)]  For everyfor every $i\in \{1,\ldots, k\}$ and $j\in \{1,\ldots, n_i\}$,  $${\bf K_{X}} { X}^*_{i,j}= ({\bf W}_{i,j}^*\otimes I)  {\bf K_{X}}.
    $$
\end{enumerate}
The $k$-tuple   ${\bf W}:=({\bf W}_1,\ldots, {\bf W}_k)$
 plays the role of the {\it left universal model for the  noncommutative poly-hyperball}
${\bf D_n^m}$.

For each $i\in \{1,\ldots, k\}$ and $j\in \{1,\ldots, n_i\}$, we define the {\it weighted right creation operators}
$\Lambda_{i,j}:F^2(H_{n_i})\to F^2(H_{n_i})$ by setting $\Lambda_{i,j}:= R_{i ,j}D_{i,j}$,
  where $R_{i,1},\ldots, R_{i,n_i}$ are
 the right creation operators on the full Fock space $F^2(H_{n_i})$.
 In this case, we have
\begin{equation*}
\Lambda_{i,\beta} e^i_\gamma= \frac {\sqrt{b_{i,\gamma}^{(m_i)}}}{\sqrt{b_{i,
\gamma \tilde\beta}^{(m_i)}}} e^i_{ \gamma \tilde \beta} \quad \text{
and }\quad \Lambda_{i,\beta}^* e^i_\alpha =\begin{cases} \frac
{\sqrt{b_{i,\gamma}^{(m_i)}}}{\sqrt{b_{i,\alpha}^{(m_i)}}}e_\gamma& \text{ if
}
\alpha=\gamma \tilde \beta \\
0& \text{ otherwise }
\end{cases}
\end{equation*}
 for every $\alpha, \beta \in \FF_{n_i}^+$, where $\tilde \beta$ denotes
 the reverse of $\beta=g^i_{j_1}\cdots g^i_{j_p}$, i.e.,
 $\tilde \beta=g_{j_p}^i\cdots g^i_{j_1}$. Note that $\Lambda_{i,j} W_{i,p}=W_{i,p}\Lambda_{i,j}$ fpr any $i\in \{1,\ldots,k\}$ and $j,p\in \{1,\ldots, n_i\}$.
  We introduce  the operator ${\bf \Lambda}_{i,j}$ acting on
$F^2(H_{n_1})\otimes\cdots\otimes F^2(H_{n_k})$ and given by
$${\bf \Lambda}_{i,j}:=\underbrace{I\otimes\cdots\otimes I}_{\text{${i-1}$
times}}\otimes \Lambda_{i,j}\otimes \underbrace{I\otimes\cdots\otimes
I}_{\text{${k-i}$ times}}.
$$
We set   ${\bf \Lambda}_i:=({\bf \Lambda}_{i,1},\ldots,{\bf \Lambda}_{i,n_i})$  for each $i\in \{1,\ldots, k\}$.
The $k$-tuple   ${\bf \Lambda}:=({\bf \Lambda}_1,\ldots, {\bf \Lambda}_k)$
 plays the role of the {\it right universal model for the  noncommutative poly-hyperball}
${\bf D_n^m}$. When necessary, we also denote by ${\bf \Lambda}_i$ the row operator $[{\bf \Lambda}_{i,1}\cdots {\bf \Lambda}_{i,n_i}]$ acting on the direct sum $(\otimes_{s=1}^k F^2(H_{n_s}))^{(n_i)}$.
More on  noncommutative polydomains, universal models, noncommutative Berezin transforms  and their applications can be found in \cite{Po-poisson}, \cite{Po-domains-models},  \cite{Po-domains}, \cite{Po-Berezin2}, and  \cite{Po-Berezin1}.

\bigskip

\bigskip

\section{A multivariable Brown-Halmos type equation: solutions, free $k$-pluriharmonic functions }

In this section, we introduce the multivariable Brown-Halmos type equations associated with the poly-hyperballs and
provide a complete description of all solutions   in terms of bounded free $k$-pluriharmonic functions.

  For each $i\in \{1,\ldots, k\}$, we
define  the bounded linear operator  ${\Omega}_i:F^2(H_{n_i})\to F^2(H_{n_i})$ by setting
$$
{\Omega_i}\left(\sum_{\alpha\in \FF_{n_i}^+}   a_{(\alpha)}e^i_\alpha\right):=a_{(g_0^i)}+ \sum_{j=1}^\infty\frac{m_i+j-1}{j}\sum_{\alpha\in \FF_{n_i}^+, |\alpha|=j} a_{(\alpha)}\otimes e^i_\alpha,
$$
and the operator ${\bf \Omega}_i$  is  acting on the tensor Hilbert space
$\otimes_{s=1}^k F^2(H_{n_s})$ by setting
$${\bf \Omega}_i:=\underbrace{I\otimes\cdots\otimes I}_{\text{${i-1}$
times}}\otimes \Omega_{i}\otimes \underbrace{I\otimes\cdots\otimes
I}_{\text{${k-i}$ times}}.
$$

If $A\in B(\cH)$ and $n\in \NN$, we use the notation ${\bf diag}_n(A)$ for the direct sum of $n$ copies of $A$, acting on the Hilbert space $\cH^{(n)}$.

\begin{proposition} \label{WWW} For each  $i\in \{1,\ldots, k\}$, the operator ${\bf\Lambda}_i$ satisfies the following properties.
\begin{enumerate}
\item[(i)]
  ${\bf \Lambda}_i^*{\bf \Lambda}_i$ is an invertible operator   acting on $\text{\rm range}\,{\bf \Lambda}_i^*=(\otimes_{s=1}^k F^2(H_{n_s}))^{(n_i)}$ and
$$
({\bf \Lambda}_i^*{\bf \Lambda}_i)^{-1}{\bf \Lambda}_i^*={\bf \Lambda}_i^*{\bf \Omega}_i.
$$
\item[(ii)] The operator
  ${\bf \Lambda}_i({\bf \Lambda}_i^*{\bf \Lambda}_i)^{-1}{\bf \Lambda}_i^*$ is the orthogonal projection of
$\otimes_{s=1}^k F^2(H_{n_s})$ onto
$$\text{\rm range}\,{\bf\Lambda}_i
=\left(\otimes_{s=1}^{i-1} F^2(H_{n_s})\right)\otimes(F^2(H_{n_i})\ominus \CC)\otimes \left(\otimes_{s=i+1}^{k} F^2(H_{n_s})\right).
$$
\item[(iii)]  The following identity holds:
$${\bf \Lambda}_i({\bf \Lambda}_i^*{\bf \Lambda}_i)^{-1}={\bf \Lambda}_i {\bf diag}_{n_i}\left( \sum_{j=0}^{m_i-1} (-1)^j \left(\begin{matrix}  m_i\\j+1
\end{matrix}\right)\sum_{\beta\in \FF_{n_i}^+, |\beta|=j} {\bf \Lambda}_{i,\beta} {\bf \Lambda}_{i,\beta}^*\right).
$$
\end{enumerate}
 \end{proposition}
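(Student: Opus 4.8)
The plan is to prove the three assertions in order, since each feeds into the next, and to reduce everything to a computation on the single factor $F^2(H_{n_i})$ via the tensor decomposition ${\bf \Lambda}_{i,j}=I\otimes\cdots\otimes\Lambda_{i,j}\otimes\cdots\otimes I$; all operators in sight act as the identity on the factors $s\neq i$, so it suffices to establish the corresponding identities for $\Lambda_i=(\Lambda_{i,1},\ldots,\Lambda_{i,n_i})$ and ${\Omega}_i$ on $F^2(H_{n_i})$, and then tensor with identities. First I would fix $i$ and suppress it from the notation. For (i): the row operator $\Lambda_i^*$ sends $F^2(H_{n_i})$ into the direct sum $F^2(H_{n_i})^{(n_i)}$, and from the action formula $\Lambda_{i,j}^* e_\alpha = \sqrt{b_\gamma/b_\alpha}\,e_\gamma$ when $\alpha=\gamma\tilde g_j$ (and $0$ otherwise) one computes that $\Lambda_i^*\Lambda_i = \sum_{j=1}^{n_i}\Lambda_{i,j}^*\Lambda_{i,j}$ is a diagonal operator whose eigenvalue on $e_\gamma$ is $\sum_{j}b_\gamma/b_{\gamma g_j} = \sum_j \tfrac{|\gamma|+1}{|\gamma|+m_i}=\tfrac{n_i(|\gamma|+1)}{|\gamma|+m_i}$, hence strictly positive and bounded below by $1$ on $\mathrm{range}\,\Lambda_i^*$, which is all of $F^2(H_{n_i})^{(n_i)}$ since the $\Lambda_{i,j}^*$ are jointly onto (every $e_\gamma$ is hit). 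Inverting this diagonal operator and comparing with the definition of $\Omega_i$ — whose eigenvalue on the degree-$j$ part is $\tfrac{m_i+j-1}{j}$ — I would check the identity $(\Lambda_i^*\Lambda_i)^{-1}\Lambda_i^* = \Lambda_i^*\Omega_i$ entry by entry on basis vectors $e_\alpha$; here one must be slightly careful about the degree shift caused by applying $\Lambda_{i,j}^*$ (which lowers degree by one), so $\Omega_i$ is evaluated on $e_\gamma$ with $|\gamma|=|\alpha|-1$, and the factor $\tfrac{m_i+|\gamma|}{|\gamma|+1}\cdot\tfrac{b_\gamma^{1/2}}{b_\alpha^{1/2}}$ must match $\tfrac{|\alpha|+m_i-1}{n_i |\alpha|}\cdot$ (appropriate weight) — a routine but not entirely trivial bookkeeping with the explicit $b$'s.

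For (ii), the operator $\Lambda_i(\Lambda_i^*\Lambda_i)^{-1}\Lambda_i^*$ is manifestly idempotent and self-adjoint (this is the general fact that $V(V^*V)^{-1}V^*$ is the orthogonal projection onto $\mathrm{range}\,V$ whenever $V^*V$ is invertible on $\overline{\mathrm{range}\,V^*}$), so the only content is identifying its range: $\mathrm{range}\,\Lambda_i=\sum_j \mathrm{range}\,\Lambda_{i,j}$, and from $\Lambda_{i,j}e_\gamma = \tfrac{b_\gamma^{1/2}}{b_{\gamma\tilde g_j}^{1/2}} e_{\gamma g_j}$ one sees the closure of this span is exactly $F^2(H_{n_i})\ominus\CC 1$, i.e. the orthogonal complement of the constants; tensoring back in gives the stated form. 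The one point to note is that $\Lambda_i$ has closed range (being bounded below times a partial isometry-type factor), so no closure issue arises.

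Assertion (iii) is the heart of the matter: I would combine (i) with the ``inverse defect'' identity for the weighted right creation operators. The key observation is that $\Omega_i$ has a series expansion in the ${\Lambda}_{i,\beta}{\Lambda}_{i,\beta}^*$. Indeed, on $F^2(H_{n_i})$ the operator $\sum_{\beta,|\beta|=j}\Lambda_{i,\beta}\Lambda_{i,\beta}^*$ is the diagonal projection-weighted operator acting on $e_\alpha$ by $\tfrac{b_{\gamma}}{b_\alpha}$ summed over all ways $\alpha=\gamma\tilde\beta$ with $|\beta|=j$; using the binomial identity that defines the $b$'s one shows $\sum_{j=0}^{\infty} c_j^{(m_i)} \sum_{|\beta|=j}\Lambda_{i,\beta}\Lambda_{i,\beta}^* $ telescopes, and crucially that the \emph{finite} alternating sum $\sum_{j=0}^{m_i-1}(-1)^j\binom{m_i}{j+1}\sum_{|\beta|=j}\Lambda_{i,\beta}\Lambda_{i,\beta}^*$, when composed with $\Lambda_i$ on the right, reproduces $\Lambda_i\Omega_i$ — the truncation at $m_i-1$ being legitimate precisely because the higher-order terms are killed after multiplication by $\Lambda_i$ (equivalently, on $\mathrm{range}\,\Lambda_i^*$ only the first $m_i$ terms survive). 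Concretely, one checks the scalar identity
\[
\frac{m_i+j-1}{j}\Big|_{\text{deg }j} \;=\; \sum_{p=0}^{m_i-1}(-1)^p\binom{m_i}{p+1}(\text{eigenvalue of }\textstyle\sum_{|\beta|=p}\Lambda_{i,\beta}\Lambda_{i,\beta}^*\text{ on deg }j),
\]
which follows from a generating-function / Vandermonde-type argument on $\sum_p \binom{m_i}{p+1}(-1)^p x^p$. Then (iii) reads $\Lambda_i(\Lambda_i^*\Lambda_i)^{-1} = \Lambda_i\Lambda_i^*\Omega_i\cdot(\text{something})$ — more precisely, from (i), $(\Lambda_i^*\Lambda_i)^{-1} = (\Lambda_i^*\Lambda_i)^{-1}\Lambda_i^*\Lambda_i(\Lambda_i^*\Lambda_i)^{-1} = \Lambda_i^*\Omega_i\Lambda_i(\Lambda_i^*\Lambda_i)^{-1}$, and I would instead directly verify $\Lambda_i(\Lambda_i^*\Lambda_i)^{-1}=\Lambda_i\cdot{\bf diag}_{n_i}\big(\sum_{j}(-1)^j\binom{m_i}{j+1}\sum_{|\beta|=j}\Lambda_{i,\beta}\Lambda_{i,\beta}^*\big)$ by evaluating both sides on basis vectors and invoking the scalar identity above. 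The main obstacle, and the place where care is genuinely needed, is this combinatorial identity linking the coefficients $\tfrac{m_i+j-1}{j}$ of $\Omega_i$ to the alternating binomial sum with the weights $b_{i,\beta}^{(m_i)}$; everything else is bookkeeping with the tensor structure and the explicit action formulas already recorded in Section 1.
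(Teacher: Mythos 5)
Your proposal has genuine gaps, one structural (affecting (i) and everything downstream) and one at the combinatorial heart of (iii).

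First, you misidentify the Gram operator. The paper's ${\bf \Lambda}_i$ is the \emph{row} operator $[{\bf \Lambda}_{i,1}\cdots {\bf \Lambda}_{i,n_i}]$ from $(\otimes_{s=1}^k F^2(H_{n_s}))^{(n_i)}$ into $\otimes_{s=1}^k F^2(H_{n_s})$, so ${\bf \Lambda}_i^*{\bf \Lambda}_i$ is the $n_i\times n_i$ operator matrix $[{\bf \Lambda}_{i,j}^*{\bf \Lambda}_{i,s}]_{j,s}$ acting on the $n_i$-fold direct sum; since $\Lambda_{i,j}=R_{i,j}D_{i,j}$ and $R_{i,j}^*R_{i,s}=\delta_{js}I$, this matrix is block diagonal, equal to $\oplus_{j=1}^{n_i}D_{i,j}^2$, with eigenvalue $\frac{|\gamma|+1}{|\gamma|+m_i}$ on $e^i_\gamma$ in each block (bounded below by $1/m_i$, not by $1$). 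Your formula $\Lambda_i^*\Lambda_i=\sum_j\Lambda_{i,j}^*\Lambda_{i,j}$ with eigenvalue $n_i(|\gamma|+1)/(|\gamma|+m_i)$ is the Gram operator of the \emph{column} tuple: it acts on the wrong space and carries a spurious factor $n_i$. With it the identity $({\bf \Lambda}_i^*{\bf \Lambda}_i)^{-1}{\bf \Lambda}_i^*={\bf \Lambda}_i^*{\bf \Omega}_i$ fails by the factor $n_i$; your own bookkeeping already betrays this (the target $\frac{|\alpha|+m_i-1}{n_i|\alpha|}$ has the extra $1/n_i$ and cannot match), and you never carry the check to completion. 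The correct route is componentwise: $D_{i,j}^{-2}\Lambda_{i,j}^*=\Lambda_{i,j}^*\Omega_i$ for each $j$, then assemble, using $\Lambda_i^*\Lambda_i=\oplus_j D_{i,j}^2$. Also, surjectivity of ${\bf \Lambda}_i^*$ onto the full direct sum does not follow from ``every $e_\gamma$ is hit'': one must solve $\Lambda_{i,j}^*g=f_j$ for all $j$ \emph{simultaneously}, which the paper does by taking $g=\sum_j R_{i,j}D_{i,j}^{-1}f_j$ and using the orthogonality of the ranges of the right creation operators; this surjectivity is then indispensable in (iii) to cancel ${\bf \Lambda}_i^*$.

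Second, the key step of (iii) is only gestured at. The paper needs no generating-function argument: by (ii), ${\bf \Lambda}_i({\bf \Lambda}_i^*{\bf \Lambda}_i)^{-1}{\bf \Lambda}_i^*=I-(id-\Phi_{{\bf \Lambda}_i})^{m_i}(I)$, where $(id-\Phi_{{\bf \Lambda}_i})^{m_i}(I)=I\otimes\cdots\otimes {\bf P}_\CC\otimes\cdots\otimes I$ is the known defect identity for the universal model recalled from the references; expanding the binomial gives $\sum_{j=0}^{m_i-1}(-1)^j\binom{m_i}{j+1}\sum_{|\beta|=j+1}{\bf \Lambda}_{i,\beta}{\bf \Lambda}_{i,\beta}^*={\bf \Lambda}_i\,{\bf diag}_{n_i}\bigl(\sum_{j=0}^{m_i-1}(-1)^j\binom{m_i}{j+1}\sum_{|\beta|=j}{\bf \Lambda}_{i,\beta}{\bf \Lambda}_{i,\beta}^*\bigr){\bf \Lambda}_i^*$, and one cancels ${\bf \Lambda}_i^*$ on the right by the surjectivity from (i). If you prefer a direct verification, you must actually prove the degree-$p$ scalar identity $\frac{p+m_i}{p+1}=\sum_{j=0}^{m_i-1}(-1)^j\binom{m_i}{j+1}\binom{p-j+m_i-1}{m_i-1}\big/\binom{p+m_i-1}{m_i-1}$, which is precisely what you defer to ``a Vandermonde-type argument''; moreover your intermediate claim that the alternating sum composed with $\Lambda_i$ ``reproduces $\Lambda_i\Omega_i$'' does not type-check, since ${\bf \Omega}_i$ acts on the tensor product while ${\bf \Lambda}_i$ is defined on the direct sum (the correct relation, used later in the paper, is ${\bf \Lambda}_i({\bf \Lambda}_i^*{\bf \Lambda}_i)^{-1}={\bf \Omega}_i{\bf \Lambda}_i$). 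Your part (ii) is fine as a general fact about $V(V^*V)^{-1}V^*$ plus the identification of $\text{\rm range}\,{\bf \Lambda}_i$, but as written the proposal does not yet constitute a proof of (i) or (iii).
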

\begin{proof} To prove part (i), let    $f_1,\ldots, f_{n_i}\in F^2(H_{n_i})$ and note that
$$g:=R_{i,1}D_{i,1}^{-1}f_1+\cdots +R_{i,n_i}D_{i,n_i}^{-1}f_{n_i}\in F^2(H_{n_i}),
$$
where $R_{i,j}$ and $ D_{i,j}$ are defined in Section 1.
 Since $R_{i,j}^*R_{i,s}=\delta_{js}I_{F^2(H_{n_i})}$, we have
$$
\Lambda_{i,j}^*g=D_{i,j}R_{i,j}^*R_{i,j}D_{i,j}^{-1}f_j=f_j
$$
and, consequently, $\Lambda_i^*g=\oplus_{j=1}^{n_i} f_j \in F^2(H_{n_i})^{(n_i)}$.
This proves that $\text{\rm range}\,\Lambda_i^*=F^2(H_{n_i})^{(n_i)}$ and, therefore, $\text{\rm range}\,{\bf \Lambda}_i^*=(\otimes_{s=1}^k F^2(H_{n_s}))^{(n_i)}$.

Now, let $f:=\sum_{\alpha\in \FF_{n_i}^+} a_{(\alpha)} e^i_\alpha\in F^2(H_{n_i})$ and note that
$$
\Lambda_{i,j}^*f=\sum_{\gamma\in \FF_{n_i}^+} \sqrt{\frac{|\gamma|+1}{|\gamma|+m_i}} a_{(\gamma g_j^i)} e_\gamma^i,\qquad j\in \{1,\ldots, n_i\},
$$
and
$$
D_{i,j}^{-2}\Lambda_{i,j}^*f=\sum_{\gamma\in \FF_{n_i}^+} \sqrt{\frac{|\gamma|+m_i}{|\gamma|+1}}a_{(\gamma g_j^i)} e_\gamma^i .
$$
On the other hand, we have
\begin{equation*}
\begin{split}
\Lambda_{i,j}^*{\Omega_{i}} f&=\Lambda_{i,j}^*\left(a_{(g_0^i)}+\sum_{j=1}^{n_i} \sum_{\gamma\in \FF_{n_i}^+}  \frac{|\gamma|+m_i}{|\gamma|+1}a_{(\gamma g_j^i)} e^i_{\gamma g_j}\right)\\
&=\sum_{\gamma\in \FF_{n_i}^+}  \frac{|\gamma|+m_i}{|\gamma|+1}
\sqrt{\frac{|\gamma|+1}{|\gamma|+m_i}} a_{(\gamma g_j)} e^i_\gamma
= \sum_{\gamma\in \FF_{n_i}^+}  \sqrt{\frac{|\gamma|+m_i}{|\gamma|+1}}
  a_{(\gamma g_j^i)} e^i_\gamma.
\end{split}
\end{equation*}
Consequently, $D_{i,j}^{-2}\Lambda_{i,j}^*=\Lambda_{i,j}^*{\Omega_{i}}$ for every $j\in \{1,\ldots, n_i\}$, which shows that
$$
\left(\oplus_{j=1}^{n_i} D_{i,j}^{-2}\right)\Lambda_i^*=\Lambda_i^*\Omega_i.
$$
Since $\Lambda_{i,j} :=R_{i,j}D_{i,j}$, the operators  $\Lambda_{i,1},\ldots, \Lambda_{i,n_i}$   have orthogonal ranges and
$\Lambda_i^*\Lambda_i=\oplus_{j=1}^{n_i} D_{i,j}^2$. Consequently,
$(\Lambda_i^*\Lambda_i)^{-1} \Lambda_i^*=\Lambda_i^* \Omega_i$, which implies item (i).

A straightforward computation reveals that
$$ \Lambda_i \Lambda_i^*=\sum_{j=1}^\infty \frac{1}{m_i+j-1} {\bf P}_{\{\text{\rm span}\,  e^i_\alpha: \ \alpha\in \FF_{n_i}^+, |\alpha|=j\}},
$$
where ${\bf P}_\cM$ is the orthogonal projection onto $\cM$.
Consequently, if $f:=\sum_{\alpha\in \FF_{n_i}^+} a_{(\alpha)} e^i_\alpha\in F^2(H_{n_i})$, part (i) implies
$$
\Lambda_i(\Lambda_i^*\Lambda_i)^{-1} \Lambda_i^*f=\Lambda_i\Lambda_i^* \Omega_i f =\sum_{\alpha\in \FF_{n_i}^+, |\alpha|\geq 1} a_{(\alpha)} e^i_\alpha,
$$
which shows that $\Lambda_i(\Lambda_i^*\Lambda_i)^{-1} \Lambda_i^*$ is the orthogonal projection of $F^2(H_{n_i})$ onto $F^2(H_{n_i})\ominus \CC$. Now, item (ii) follows.

To prove item (iii), we recall   that
$$
 (id-\Phi_{{\bf \Lambda}_i})^{m_i}(I)=\underbrace{I\otimes\cdots\otimes I}_{\text{${i-1}$
times}}\otimes {\bf P}_\CC\otimes \underbrace{I\otimes\cdots\otimes
I}_{\text{${k-i}$ times}},\qquad i\in \{1,\ldots, k\},
 $$
  where ${\bf P}_\CC$ is the
 orthogonal projection of  $  F^2(H_{n_i})$ onto $\CC 1\subset F^2(H_{n_i})$.  Consequently, using  item (ii), we deduce that

 \begin{equation*}
 \begin{split}
 {\bf \Lambda}_i({\bf \Lambda}_i^*{\bf \Lambda}_i)^{-1}{\bf \Lambda}_i^*  &= I_{\otimes_{s=1}^k F^2(H_{n_s})}-(id-\Phi_{{\bf \Lambda}_i})^{m_i}(I)\\
 &=
 \sum_{j=0}^{m_i-1} (-1)^j \left(\begin{matrix}  m_i\\j+1
\end{matrix}\right)\sum_{\beta\in \FF_{n_i}^+, |\beta|=j+1} {\bf \Lambda}_{i,\beta} {\bf \Lambda}_{i,\beta}^*\\
&={\bf \Lambda}_i{\bf diag}_{n_i}\left( \sum_{j=0}^{m_i-1} (-1)^j \left(\begin{matrix}  m_i\\j+1
\end{matrix}\right)\sum_{\beta\in \FF_{n_i}^+, |\beta|=j} {\bf \Lambda}_{i,\beta} {\bf \Lambda}_{i,\beta}^*\right){\bf \Lambda}_i^*.
\end{split}
 \end{equation*}
Since $\text{\rm range}\,{\bf \Lambda}_i^*=(\otimes_{s=1}^k F^2(H_{n_s}))^{(n_i)}$,  item (iii) follows.
The proof is complete.
\end{proof}

The operator ${\bf \Lambda}_i': (\otimes_{s=1}^k F^2(H_{n_s}))^{(n_i)}\to \otimes_{s=1}^k F^2(H_{n_s})$, $i\in \{1,\ldots, k\}$,
defined by ${\bf \Lambda}_i':={\bf \Lambda}_i({\bf \Lambda}_i^*{\bf \Lambda}_i)^{-1}$ is called the {\it Cauchy dual }of ${\bf \Lambda}_i$.
\begin{definition} An operator $T\in B(\otimes_{s=1}^k F^2(H_{n_s}))$ is said to have the Brown-Halmos property if
$$
{\bf \Lambda}_i^{\prime *} T{\bf \Lambda}_i'=
{\bf diag}_{n_i}\left( \sum_{j=0}^{m_i-1} (-1)^j \left(\begin{matrix}  m_i\\j+1
\end{matrix}\right)\sum_{\beta\in \FF_{n_i}^+, |\beta|=j} {\bf \Lambda}_{i,\beta}T {\bf \Lambda}_{i,\beta}^*\right)
$$
for every $i\in \{1,\ldots, k\}$.
\end{definition}
We mention  that in the particular case in which $k=1, m_1=1, n_1\in \NN$, the condition in the definition above becomes $R_j^*TR_s=\delta_{js} T$ for $j,s\in \{1,\ldots, n_1\}$. The class of the operators satisfying these equations coincides with the class of   multi-Toeplitz operators on full Fock spaces which  has been studied in  several papers (see \cite{DP2}, \cite{DKP}, \cite{DLP}, \cite{Po-entropy}, \cite{Po-pluriharmonic}, \cite{Ken1}, \cite{Ken2}).

 Note also that if  $n_1=\cdots =n_k=1$ and $m_1=\cdots =m_k=1$, then the  equations become $M_{z_i}^* TM_{z_i}=T$ for every $i\in \{1,\ldots, k\}$, where $M_{z_i}$ is the multiplication by the coordinate function $z_i$ on  $H^2(\DD^k)$, the Hardy space of the  polydisc.  The class of operators  satisfying this  condition coincides with the class of Toeplitz operators on $H^2(\DD^k)$ (see \cite{MSS}).
 Taking here $k=1$,   we obtain the Brown-Halmos condition $S^* TS=T$, where $S$ is the unilateral shift on $H^2(\DD)$ (see \cite{BH}).

   If $\cK$ is a separable Hilbert space, we say that an operator  $T\in B\left(\cK\otimes \bigotimes_{s=1}^k F^2(H_{n_s})\right)$ satisfies  the Brown-Halmos condition if
\begin{equation} \label{BH}
\widetilde{\bf \Lambda}_i^{\prime *} T\widetilde{\bf \Lambda}_i'=
{\bf diag}_{n_i}\left( \sum_{j=0}^{m_i-1} (-1)^j \left(\begin{matrix}  m_i\\j+1
\end{matrix}\right)\sum_{\beta\in \FF_{n_i}^+, |\beta|=j} \widetilde{\bf \Lambda}_{i,\beta}T \widetilde{\bf \Lambda}_{i,\beta}^*\right),\qquad i\in \{1,\ldots, k\},
\end{equation}
where $\widetilde{\bf \Lambda}_{i,j}:=I_\cK\otimes {\bf \Lambda}_{i,j}$, $\widetilde{\bf \Lambda}_i:=[\widetilde{\bf \Lambda}_{i,1}\cdots \widetilde{\bf \Lambda}_{i,n_i}]$ , and
the Cauchy dual operator
$$\widetilde{\bf \Lambda}_i': \left(\cK\otimes\bigotimes_{s=1}^k F^2(H_{n_s})\right)^{(n_i)}\to \cK\otimes\bigotimes_{s=1}^k F^2(H_{n_s})$$
is defined by $\widetilde{\bf \Lambda}_i':=\widetilde{\bf \Lambda}_i(\widetilde{\bf \Lambda}_i^*\widetilde{\bf \Lambda}_i)^{-1}$.

Let $\Gamma:\TT^k\to B( \otimes_{s=1}^k F^2(H_{n_s}))$ be the strongly continuous unitary representation of the $k$-dimensional torus, defined by
$$
\Gamma(e^{i\theta_1},\ldots, e^{i\theta_k})f
:=\sum_{{\alpha_s\in \FF_{n_s}^+ }\atop{s\in \{1,\ldots,k\}}} e^{i\theta_1|\alpha_1|}\cdots e^{i\theta_k|\alpha_k|} a_{\alpha_1,\ldots, \alpha_k} e^1_{\alpha_1}\otimes \cdots e^k_{\alpha_k}.
$$
for every $f=\sum_{{\alpha_s\in \FF_{n_s}^+ }\atop{s\in \{1,\ldots,k\}}} a_{\alpha_1,\ldots, \alpha_k} e^1_{\alpha_1}\otimes \cdots e^k_{\alpha_k}\in \otimes_{s=1}^k F^2(H_{n_s})$.
We have the orthogonal decomposition
$$
 \otimes_{s=1}^k F^2(H_{n_s})=\bigoplus_{(p_1,\ldots, p_k)\in \ZZ^k} \cE_{p_1,\ldots, p_k},
 $$
where the spectral subspace $\cE_{p_1,\ldots, p_k}$ is the image of the orthogonal projection ${\bf P}_{p_1,\ldots, p_k}\in B(\otimes_{s=1}^k F^2(H_{n_s}))$ defined by
$$
{\bf P}_{p_1,\ldots, p_k}:= \left(\frac{1}{2\pi}\right)^k
\int_0^{2\pi}\cdots \int_0^{2\pi} e^{-ip_1\theta_1}\cdots e^{-ip_k\theta_k}\Gamma(e^{i\theta_1},\ldots, e^{i\theta_k})d\theta_1\ldots d\theta_k,
$$
where the integral is defined as a weak integral  and the integrant is a continuous function in  the strong operator topology.
We remark that if $p_s<0$ for some $s\in \{1,\ldots, k\}$, then ${\bf P}_{p_1,\ldots, p_k}=0$ and, therefore, $\cE_{p_1,\ldots,p_k}=\{0\}$. Note that the spectral subspaces of $\Gamma$ are
$$
\cE_{p_1,\ldots, p_k}:=\left\{ f\in \otimes_{s=1}^k F^2(H_{n_s}): \
\Gamma(e^{i\theta_1},\ldots, e^{i\theta_k})f=e^{i\theta_1 p_1}\cdots e^{i\theta_k p_k}f\right\}
$$
for $(p_1,\ldots, p_k)\in \ZZ^k$.  From now on, we  use the notation
$\Gamma (e^{i {\boldsymbol\theta}}):= \Gamma(e^{i\theta_1},\ldots, e^{i\theta_k})$.

\begin{definition}  \label{mhp} If $T\in B(\cK\otimes \bigotimes_{s=1}^k  F^2(H_{n_s}))$ and $(s_1,\ldots, s_k)\in \ZZ^k$ we define the $(s_1,\ldots, s_k)$-multi-homogeneous part of $T$  to be the operator $T_{s_1,\ldots, s_k}\in B(\cK\otimes \bigotimes_{s=1}^k  F^2(H_{n_s}))$ defined by
$$
T_{s_1,\ldots, s_k}:=
\left(\frac{1}{2\pi}\right)^k
\int_0^{2\pi}\cdots \int_0^{2\pi} e^{-is_1\theta_1}\cdots e^{-is_k\theta_k}\left(I_\cK\otimes \Gamma (e^{i {\boldsymbol\theta}})\right) T
\left(I_\cK\otimes \Gamma (e^{i {\boldsymbol\theta}})\right)^* d\theta_1\ldots d\theta_k.
$$

\end{definition}

It is easy to see that $(T^*)_{s_1,\ldots, s_k}=(T_{-s_1,\ldots, -s_k})^*$ and
$$\Gamma (e^{i {\boldsymbol\theta}})^*|_{\cE_{p_1,\ldots, p_k}}
=e^{-ip_1\theta_1}\cdots e^{-ip_k\theta_k}I_{\cE_{p_1,\ldots, p_k}}
$$
for every $(p_1,\ldots, p_k)\in \ZZ^k$.
Fix $(s_1,\ldots, s_k)\in \ZZ^k$ and note that, for every $f\in \cK\otimes \cE_{p_1,\ldots, p_k}$,
\begin{equation*}
\begin{split}
T_{s_1,\ldots, s_k}f&=\left(\frac{1}{2\pi}\right)^k
\int_0^{2\pi}\cdots \int_0^{2\pi} e^{-i(s_1+p_1)\theta_1}\cdots e^{-i(s_k+p_k)\theta_k}\left(I_\cK\otimes \Gamma (e^{i {\boldsymbol\theta}})\right) Tf d\theta_1\ldots d\theta_k\\
 &=(I_\cK\otimes{\bf P}_{s_1+p_1,\ldots, s_k+p_k})Tf.
\end{split}
\end{equation*}
Consequently,
$$T_{s_1,\ldots, s_k}\left(\cK\otimes \cE_{p_1,\ldots, p_k}\right)\subset  \cK\otimes \cE_{s_1+p_1,\ldots, s_k+p_k}$$
 for every $(p_1,\ldots, p_k)\in \ZZ^k$
and  $(s_1,\ldots, s_k)\in \ZZ^k$.

\begin{definition} An operator  $A\in B(\cK\otimes \bigotimes_{s=1}^k  F^2(H_{n_s}))$ is said to be multi-homogeneous of degree $(s_1,\ldots, s_k)\in \ZZ^k$ if
$$
A\left(\cK\otimes \cE_{p_1,\ldots, p_k}\right)\subset  \cK\otimes \cE_{s_1+p_1,\ldots, s_k+p_k}
$$
 for every $(p_1,\ldots, p_k)\in \ZZ^k$
\end{definition}

\begin{lemma}  \label{lem}  If $\{x_s\}_{s\in \ZZ}$ is a sequence of orthogonal vectors in $\cH$ such that $\sum_{s\in \ZZ} \|x_s\|^2<\infty$, then
$$
\sum_{s\in \ZZ}x_s=\lim_{N\to \infty }\sum_{| s |\leq N}\left(1-\frac{| s |}{N+1}\right)x_s,
$$
where the convergence is in norm.
\end{lemma}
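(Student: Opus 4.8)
The plan is to reduce the statement to the classical fact that Cesàro means of a norm–convergent sequence converge to the same limit; orthogonality together with square-summability is needed only to guarantee that the left-hand side exists at all.

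First I would set $S_N:=\sum_{|s|\le N}x_s$ for $N\ge 0$. By orthogonality of the $x_s$, for $p>q\ge 0$ one has $\|S_p-S_q\|^2=\sum_{q<|s|\le p}\|x_s\|^2$, and this tends to $0$ as $p,q\to\infty$ because $\sum_{s\in\ZZ}\|x_s\|^2<\infty$. Hence $(S_N)_{N\ge 0}$ is a Cauchy sequence in $\cH$ and converges in norm to some vector $x$, which by definition is $\sum_{s\in\ZZ}x_s$.

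Next I would rewrite the weighted sum $\sigma_N:=\sum_{|s|\le N}\bigl(1-\tfrac{|s|}{N+1}\bigr)x_s$ as an average of the partial sums $S_0,\dots,S_N$. For $|s|\le N$ the coefficient equals $\tfrac{1}{N+1}(N+1-|s|)=\tfrac{1}{N+1}\,\#\{j:\ |s|\le j\le N\}$, so interchanging the two finite sums gives
$$\sigma_N=\frac{1}{N+1}\sum_{|s|\le N}\Bigl(\sum_{j=|s|}^{N}1\Bigr)x_s=\frac{1}{N+1}\sum_{j=0}^{N}\ \sum_{|s|\le j}x_s=\frac{1}{N+1}\sum_{j=0}^{N}S_j.$$

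Finally I would invoke Cesàro regularity: since $S_j\to x$ in norm, given $\varepsilon>0$ choose $J$ with $\|S_j-x\|<\varepsilon$ for all $j\ge J$; then for $N\ge J$,
$$\bigl\|\sigma_N-x\bigr\|=\Bigl\|\frac{1}{N+1}\sum_{j=0}^{N}(S_j-x)\Bigr\|\le\frac{1}{N+1}\sum_{j=0}^{J-1}\|S_j-x\|+\frac{N+1-J}{N+1}\,\varepsilon,$$
and the right-hand side is $<2\varepsilon$ once $N$ is large enough, since the first term has a fixed numerator. Thus $\sigma_N\to x=\sum_{s\in\ZZ}x_s$ in norm, which is the assertion. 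The argument is entirely routine; the only point requiring (minor) care is the index bookkeeping in the combinatorial identity that turns the triangular weights into a genuine Cesàro mean, and the observation that orthogonality plus square-summability is exactly what makes $\sum_{s\in\ZZ}x_s$ norm-convergent in the first place.
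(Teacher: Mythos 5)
Your proof is correct, and it is exactly the "straightforward" argument the paper alludes to when it omits the proof of this lemma: orthogonality plus square-summability gives norm convergence of the symmetric partial sums $S_N$, the triangular weights $1-\tfrac{|s|}{N+1}$ rewrite the weighted sum as the Cesàro mean $\tfrac{1}{N+1}\sum_{j=0}^{N}S_j$, and Cesàro regularity of a norm-convergent sequence finishes the job. Since the paper supplies no proof of its own, there is nothing further to compare; your index bookkeeping and the final $\varepsilon$-estimate are both sound.
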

We omit the proof of the lemma which is straightforward.
We recall from \cite{K} that if $\cX$ is a Banach space, $\varphi$ is a continuous $\cX$-valued function on $\TT$ and $\kappa_n$ is a summability kernel, then
$$
\varphi(0)=\lim_{n\to\infty}\frac{1}{2\pi}\int_0^{2\pi} \kappa_n(e^{i\theta})\varphi(e^{i\theta})d\theta.
$$
We use this result to prove the following

\begin{proposition}\label{homo-decomp}
If $T\in B(\cK\bigotimes \otimes_{s=1}^k  F^2(H_{n_s}))$ and
   $\{T_{s_1,\ldots, s_k}\}_{(s_1,\ldots, s_k)\in \ZZ^k}$ are the multi-homogeneous parts of $T$, then
   $$
   Tf=\lim_{N_1\to \infty}\ldots \lim_{N_k\to \infty} \sum_{(s_1,\ldots, s_k)\in \ZZ^k, |s_j|\leq N_j}
   \left(1-\frac{|s_1|}{N_1+1}\right)\cdots \left(1-\frac{|s_k|}{N_k+1}\right) T_{s_1,\ldots, s_k}f
   $$
for every $f\in \cK\otimes \bigotimes_{s=1}^k  F^2(H_{n_s})$, where the convergence is in norm. Moreover,
$$
Tf=\lim_{N_1\to \infty}\ldots \lim_{N_k\to \infty} \sum_{(s_1,\ldots, s_k)\in \ZZ^k, |s_j|\leq N_j} T_{s_1,\ldots, s_k}f
$$
for every $f\in \cK\otimes\cE_{p_1,\ldots, p_k}$  and every $(p_1,\ldots, p_k)\in \ZZ^k$, where the convergence is in norm.
\end{proposition}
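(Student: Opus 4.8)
The plan is to deduce both assertions from the multi-Fejér-type summability result quoted from \cite{K} together with the spectral decomposition $\otimes_{s=1}^k F^2(H_{n_s})=\bigoplus_{(p_1,\ldots,p_k)\in\ZZ^k}\cE_{p_1,\ldots,p_k}$ already established above. First I would fix $f$ and, for each $s\in\{1,\ldots,k\}$ separately, regard the map $\theta_s\mapsto \bigl(I_\cK\otimes\Gamma(e^{i\theta_1},\ldots,e^{i\theta_k})\bigr)T\bigl(I_\cK\otimes\Gamma(e^{i\theta_1},\ldots,e^{i\theta_k})\bigr)^*f$ as a continuous function on $\TT$ with values in the Banach space $\cK\otimes\bigotimes_{s=1}^k F^2(H_{n_s})$, the remaining variables being held fixed. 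Continuity in the strong operator topology of the unitary representation $\Gamma$ (stated when $\Gamma$ was introduced) gives continuity of this $\cX$-valued function. Applying the Fejér kernel $\kappa_{N_s}(e^{i\theta_s})=\sum_{|s_s|\le N_s}\bigl(1-\tfrac{|s_s|}{N_s+1}\bigr)e^{is_s\theta_s}$ in the variable $\theta_s$ and using the cited result that $\varphi(0)=\lim_n \tfrac{1}{2\pi}\int_0^{2\pi}\kappa_n(e^{i\theta})\varphi(e^{i\theta})\,d\theta$, one variable at a time, collapses the integral against the product kernel into exactly the Cesàro sum
\[
\lim_{N_1\to\infty}\cdots\lim_{N_k\to\infty}\sum_{|s_j|\le N_j}\Bigl(1-\tfrac{|s_1|}{N_1+1}\Bigr)\cdots\Bigl(1-\tfrac{|s_k|}{N_k+1}\Bigr)T_{s_1,\ldots,s_k}f,
\]
since the coefficient of $e^{-is_1\theta_1}\cdots e^{-is_k\theta_k}$ in the kernel-weighted integral is, by the integral formula in Definition \ref{mhp}, precisely $T_{s_1,\ldots,s_k}f$. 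This gives the first displayed identity, with norm convergence inherited from the Banach-space-valued summability theorem.

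For the second assertion I would use the localization established just before the proposition: if $f\in\cK\otimes\cE_{p_1,\ldots,p_k}$, then $T_{s_1,\ldots,s_k}f=(I_\cK\otimes{\bf P}_{s_1+p_1,\ldots,s_k+p_k})Tf$, so the family $\{T_{s_1,\ldots,s_k}f\}_{(s_1,\ldots,s_k)\in\ZZ^k}$ consists of mutually orthogonal vectors (they lie in distinct spectral subspaces $\cK\otimes\cE_{s_1+p_1,\ldots,s_k+p_k}$) whose sum telescopes to $Tf=\sum_{(q_1,\ldots,q_k)}(I_\cK\otimes{\bf P}_{q_1,\ldots,q_k})Tf$, hence $\sum_{(s_1,\ldots,s_k)}\|T_{s_1,\ldots,s_k}f\|^2=\|Tf\|^2<\infty$. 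Now Lemma \ref{lem}, applied iteratively in each coordinate, lets me pass from the Cesàro sums of the first part to the ordinary partial sums: replacing $\bigl(1-\tfrac{|s_j|}{N_j+1}\bigr)$ by $1$ one index at a time is harmless in norm because, for an orthogonal square-summable family, $\sum_{|s|\le N}\bigl(1-\tfrac{|s|}{N+1}\bigr)x_s\to\sum_s x_s$ and $\sum_{|s|\le N}x_s\to\sum_s x_s$ have the same limit. Iterating over $j=1,\ldots,k$ (note that the finitely many "inner" sums at each stage are genuinely finite sums of continuous-in-$N$ expressions, so the nested limits may be taken in the stated order) yields
\[
Tf=\lim_{N_1\to\infty}\cdots\lim_{N_k\to\infty}\sum_{(s_1,\ldots,s_k)\in\ZZ^k,\ |s_j|\le N_j}T_{s_1,\ldots,s_k}f.
\]

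The main obstacle I anticipate is bookkeeping with the iterated (rather than simultaneous) limits: the cited scalar/Banach-valued summability theorem is one-dimensional, so I must justify peeling off one torus variable at a time while the partial Cesàro means in the remaining variables stay uniformly bounded in operator norm — this follows because each partial Cesàro mean of $\Gamma$ is a convex combination of the spectral projections and hence a contraction-valued (indeed norm-$\le 1$) average, so the intermediate $\cX$-valued functions remain continuous and uniformly bounded, letting dominated-convergence-type arguments go through at each stage. Once that uniform control is in place, everything reduces to the two cited black boxes (\cite{K} and Lemma \ref{lem}) plus the orthogonality of spectral subspaces, and the proof is complete.
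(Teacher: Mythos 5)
Your proposal is correct and follows essentially the same route as the paper: the first identity via the Fej\'er kernels applied one torus variable at a time using the cited Banach-space-valued summability theorem, and the second via the localization $T_{s_1,\ldots,s_k}f=(I_\cK\otimes{\bf P}_{s_1+p_1,\ldots,s_k+p_k})Tf$, orthogonality and square-summability of the homogeneous pieces, and an iterated application of Lemma \ref{lem} coordinate by coordinate (the paper makes this explicit through the intermediate vectors $A_{s_1}f, A_{s_1,s_2}f,\ldots$, which is exactly the bookkeeping you describe).
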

\begin{proof}
Let $f\in \cK\otimes\cE_{p_1,\ldots, p_k}$  and $\psi:\RR^k\to \cK\otimes \bigotimes_{s=1}^k  F^2(H_{n_s})$ be the continuous function defined by
$$\psi(\theta_1,\ldots, \theta_k):=(I_\cK\otimes \Gamma(e^{i\theta_1},\ldots, e^{i\theta_k}) )T(I_\cK\otimes \Gamma(e^{i\theta_1},\ldots, e^{i\theta_k})^*)f.
$$
For each $j \in \{1,\ldots, k\}$,  we consider the Fej\' er kernel $K_{N_j}(e^{i\theta_j}):= \sum_{| s_j |\leq N_j}
\left(1-\frac{| s_j |}{N_j+1}\right) e^{is_j\theta_j}.
$
According to the remark preceding the proposition, we have
$$
\psi(0,\theta_2,\ldots, \theta_k)=\lim_{N_1\to \infty} \frac{1}{2\pi} \int_0^{2\pi}K_{N_1}(e^{i\theta_1}) \psi(\theta_1,\ldots, \theta_k)d\theta_1.
$$
Similarly, we obtain
$$
\psi(0,0,\theta_3,\ldots, \theta_k)=\lim_{N_2\to \infty} \frac{1}{2\pi} \int_0^{2\pi}K_{N_2}(e^{i\theta_2}) \psi(0,\theta_2,\ldots, \theta_k)d\theta_2.
$$
Continuing this process and combining the resulting relations, we deduce that
\begin{equation*}
\begin{split}
Tf&=\psi(0,\ldots, 0)\\
&=\lim_{N_1\to \infty}\ldots \lim_{N_k\to \infty} \left( \frac{1}{2\pi}\right)^k \int_0^{2\pi}\cdots \int_0^{2\pi}K_{N_1}(e^{i\theta_1})\cdots K_{N_k}(e^{i\theta_k})\psi(\theta_1,\ldots, \theta_k)d\theta_1\ldots d\theta_k\\
&=\lim_{N_1\to \infty}\ldots \lim_{N_k\to \infty} \left(\prod_{j=1}^k \sum_{|s_j|\leq N_j}\left(1-\frac{| s_j |}{N_j+1}\right) \right) \\
&\qquad \times \left( \frac{1}{2\pi}\right)^k \int_0^{2\pi}\cdots \int_0^{2\pi} e^{-i(s_1+p_1)\theta_1}\cdots e^{-i(s_k+p_k)\theta_k}\left(I_\cK\otimes \Gamma (e^{i {\boldsymbol\theta}})\right) T\left(I_\cK\otimes \Gamma (e^{i {\boldsymbol\theta}})\right)^* f d\theta_1\ldots d\theta_k\\
&=\lim_{N_1\to \infty}\ldots \lim_{N_k\to \infty} \sum_{(s_1,\ldots, s_k)\in \ZZ^k, |s_j|\leq N_j}
   \left(1-\frac{|s_1|}{N_1+1}\right)\cdots \left(1-\frac{|s_k|}{N_k+1}\right)T_{s_1,\ldots, s_k}f
   \end{split}
   \end{equation*}
for every $f\in \cK\otimes \bigotimes_{s=1}^k  F^2(H_{n_s})$.

To prove the second part of the proposition, assume that $f\in \cE_{p_1,\ldots, p_k}$  and  $(p_1,\ldots, p_k)\in \ZZ^k$. In this case, we have  $T_{s_1,\ldots, s_k}f\in \cE_{s_1+p_1,\ldots, s_k+p_k}$ and, consequently,
the vectors $\{T_{s_1,\ldots, s_k}f\}_{(s_1,\ldots, s_k)\in \ZZ^k}$ are pairwise orthogonal. Moreover,
$T_{s_1,\ldots, s_k}f=(I_\cK\otimes{\bf P}_{s_1+p_1,\ldots, s_k+p_k})Tf$ and
$$
\sum_{(p_1,\ldots, p_k)\in \ZZ^k}T_{s_1,\ldots, s_k}f=\left(\sum_{(p_1,\ldots, p_k)\in \ZZ^k}(I_\cK\otimes{\bf P}_{s_1+p_1,\ldots, s_k+p_k})\right)Tf.
$$
Hence, we deduce that $\sum_{(p_1,\ldots, p_k)\in \ZZ^k}T_{s_1,\ldots, s_k}f$ is convergent  in $\cK\otimes \bigotimes_{s=1}^k  F^2(H_{n_s})$ and
$$\sum_{(p_1,\ldots, p_k)\in \ZZ^k}\|T_{s_1,\ldots, s_k}f\|^2\leq \|Tf\|^2.
$$
 Due to the first part of the proposition, we have
 \begin{equation}\label{TAs}
   Tf=\lim_{N_1\to \infty}  \sum_{ |s_1|\leq N_1}
   \left(1-\frac{|s_1|}{N_1+1}\right)  A_{s_1}f,
   \end{equation}
where
$$A_{s_1}f:=\lim_{N_2\to \infty}\ldots \lim_{N_k\to \infty} \sum_{(s_2,\ldots, s_k)\in \ZZ^{k-1}, |s_j|\leq N_j}
   \left(1-\frac{|s_2|}{N_2+1}\right)\cdots \left(1-\frac{|s_k|}{N_k+1}\right)T_{s_1,s_2\ldots, s_k}f.
   $$
  Note that  $A_{s_1}f\in \cK\otimes \cE^1_{s_1+p_1}\otimes F^2(H_{n_2})\otimes \cdots \otimes F^2(H_{n_k})$ , where $\cE^1_{s_1+p_1}:=\text{\rm span} \{e^1_\alpha: \ \alpha\in \FF_{n_1}^+, |\alpha|=s_1+p_1\}$, and
  $$
  \sum_{s_1\in \ZZ}\|A_{s_1 }f\|^2\leq  \sum_{(p_1,\ldots, p_k)\in \ZZ^k}\|T_{s_1,\ldots, s_k}f\|^2\leq \|Tf\|^2.
  $$
   Since the sequence $\{A_{s_1}f\}_{s_1\in \ZZ}$  consists of pairwise orthogonal vectors, we can apply Lemma \ref{lem}   and use relation \eqref{TAs}, to deduce that
  $$
  Tf=\lim_{N_1\to \infty}  \sum_{ |s_1|\leq N_1}
      A_{s_1}f.
   $$
  Similar arguments lead to the relation
   $$
  A_{s_1}f=\lim_{N_2\to \infty}  \sum_{ |s_2|\leq N_2}
      A_{s_1, s_2}f,
   $$
   where
   $$A_{s_1, s_2}f:=\lim_{N_3\to \infty}\ldots \lim_{N_k\to \infty} \sum_{(s_3,\ldots, s_k)\in \ZZ^{k-2}, |s_j|\leq N_j}
   \left(1-\frac{|s_3|}{N_3+1}\right)\cdots \left(1-\frac{|s_k|}{N_k+1}\right)T_{s_1,s_2\ldots, s_k}f.
   $$
   Iterating this process, we deduce that
    $$
  A_{s_1,\ldots, s_{k-1}}f=\lim_{N_k\to \infty}  \sum_{ |s_k|\leq N_k}
      A_{s_1,\ldots, s_k}f,
   $$
   where
   $A_{s_1, \ldots, s_k}f:=\ T_{s_1,s_2\ldots, s_k}f.
   $
   Combining these relations, we deduce that
   $$
Tf=\lim_{N_1\to \infty}\ldots \lim_{N_k\to \infty} \sum_{(s_1,\ldots, s_k)\in \ZZ^k, |s_j|\leq N_j} T_{s_1,\ldots, s_k}f
$$
for every $f\in \cK\otimes\cE_{p_1,\ldots, p_k}$ , which completes the proof.
   \end{proof}

\begin{theorem}\label{BH-cond} If $T\in B(\cK\otimes \bigotimes_{s=1}^k  F^2(H_{n_s}))$ satisfies the Brown-Halmos condition \eqref{BH}, then so does the multi-homogeneous part $T_{s_1,\ldots, s_k}$ for every $(s_1,\ldots, s_k)\in \ZZ^k$.
\end{theorem}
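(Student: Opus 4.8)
The plan is to exploit the fact that the gauge action $\Gamma$ implements a multi-grading of $\bigotimes_{s=1}^kF^2(H_{n_s})$ under which the right universal model is homogeneous, that the Cauchy dual inherits this homogeneity, and that both sides of \eqref{BH} are bounded linear maps of $T$ transforming in a parallel way under conjugation by $\Gamma$. Averaging the conjugates of $T$ against a character of $\TT^k$ will then extract the multi-homogeneous part $T_{s_1,\dots,s_k}$ without spoiling the identity. (The Fej\'er-type reconstruction of Proposition \ref{homo-decomp} is not needed here; only the weak-integral formula from Definition \ref{mhp} is used.)

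Write $U_{\boldsymbol\theta}:=I_\cK\otimes\Gamma(e^{i{\boldsymbol\theta}})$ and $U_{\boldsymbol\theta}^{(n_i)}:={\bf diag}_{n_i}(U_{\boldsymbol\theta})$. First I would record the intertwining relations. Since $\widetilde{\bf\Lambda}_{i,j}$ maps $\cK\otimes\cE_{p_1,\dots,p_k}$ into $\cK\otimes\cE_{p_1,\dots,p_i+1,\dots,p_k}$, one gets $U_{\boldsymbol\theta}\widetilde{\bf\Lambda}_{i,j}U_{\boldsymbol\theta}^*=e^{i\theta_i}\widetilde{\bf\Lambda}_{i,j}$, hence $U_{\boldsymbol\theta}\widetilde{\bf\Lambda}_{i,\beta}U_{\boldsymbol\theta}^*=e^{i|\beta|\theta_i}\widetilde{\bf\Lambda}_{i,\beta}$ for $\beta\in\FF_{n_i}^+$, and, for the row operator, $U_{\boldsymbol\theta}\widetilde{\bf\Lambda}_i=e^{i\theta_i}\widetilde{\bf\Lambda}_iU_{\boldsymbol\theta}^{(n_i)}$. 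By the proof of Proposition \ref{WWW}, $\widetilde{\bf\Lambda}_i^*\widetilde{\bf\Lambda}_i$ is a diagonal operator in the orthonormal basis of $\otimes_{s=1}^kF^2(H_{n_s})$ with weights depending only on word lengths, so it commutes with $U_{\boldsymbol\theta}^{(n_i)}$; hence so does its inverse, and therefore the Cauchy dual satisfies $U_{\boldsymbol\theta}\widetilde{\bf\Lambda}_i'=e^{i\theta_i}\widetilde{\bf\Lambda}_i'U_{\boldsymbol\theta}^{(n_i)}$ as well, equivalently $U_{\boldsymbol\theta}^*\widetilde{\bf\Lambda}_i'=e^{-i\theta_i}\widetilde{\bf\Lambda}_i'(U_{\boldsymbol\theta}^{(n_i)})^*$.

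Next I would check that $U_{\boldsymbol\theta}TU_{\boldsymbol\theta}^*$ satisfies \eqref{BH} for every ${\boldsymbol\theta}$. Using the relations above and their adjoints, a direct computation gives
$$
\widetilde{\bf\Lambda}_i^{\prime *}\big(U_{\boldsymbol\theta}TU_{\boldsymbol\theta}^*\big)\widetilde{\bf\Lambda}_i'
=U_{\boldsymbol\theta}^{(n_i)}\big(\widetilde{\bf\Lambda}_i^{\prime *}T\widetilde{\bf\Lambda}_i'\big)\big(U_{\boldsymbol\theta}^{(n_i)}\big)^*, \qquad
\widetilde{\bf\Lambda}_{i,\beta}\big(U_{\boldsymbol\theta}TU_{\boldsymbol\theta}^*\big)\widetilde{\bf\Lambda}_{i,\beta}^*
=U_{\boldsymbol\theta}\big(\widetilde{\bf\Lambda}_{i,\beta}T\widetilde{\bf\Lambda}_{i,\beta}^*\big)U_{\boldsymbol\theta}^*.
$$
Multiplying by the scalars $(-1)^j\binom{m_i}{j+1}$, summing over $|\beta|=j$ and $j=0,\dots,m_i-1$, and applying ${\bf diag}_{n_i}$, the right-hand side of \eqref{BH} for $U_{\boldsymbol\theta}TU_{\boldsymbol\theta}^*$ becomes $U_{\boldsymbol\theta}^{(n_i)}\,R_i(T)\,\big(U_{\boldsymbol\theta}^{(n_i)}\big)^*$, where $R_i(T)$ denotes the right-hand side of \eqref{BH} for $T$. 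Since $T$ satisfies \eqref{BH}, conjugating that identity by $U_{\boldsymbol\theta}^{(n_i)}$ yields \eqref{BH} for $U_{\boldsymbol\theta}TU_{\boldsymbol\theta}^*$. Now I would pass to the average: by Definition \ref{mhp}, $T_{s_1,\dots,s_k}$ is the weak integral over $[0,2\pi]^k$ of the strongly continuous function ${\boldsymbol\theta}\mapsto e^{-is_1\theta_1}\cdots e^{-is_k\theta_k}\,U_{\boldsymbol\theta}TU_{\boldsymbol\theta}^*$. The maps $X\mapsto\widetilde{\bf\Lambda}_i^{\prime *}X\widetilde{\bf\Lambda}_i'$ and $X\mapsto R_i(X)$ are bounded and linear on $B(\cK\otimes\bigotimes_{s=1}^kF^2(H_{n_s}))$, hence commute with the weak integral; evaluating against vectors and using the previous step pointwise in ${\boldsymbol\theta}$ gives $\widetilde{\bf\Lambda}_i^{\prime *}T_{s_1,\dots,s_k}\widetilde{\bf\Lambda}_i'=R_i(T_{s_1,\dots,s_k})$ for every $i\in\{1,\dots,k\}$, which is \eqref{BH} for $T_{s_1,\dots,s_k}$.

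The step I expect to be the main obstacle is the intertwining relation for the Cauchy dual $\widetilde{\bf\Lambda}_i'$: one must recognize $\widetilde{\bf\Lambda}_i^*\widetilde{\bf\Lambda}_i$ as an operator diagonal in the gauge decomposition, so that it and its inverse commute with the direct-sum gauge $U_{\boldsymbol\theta}^{(n_i)}$, and one has to keep careful track of how this direct-sum gauge acts on the domain of the row operator $\widetilde{\bf\Lambda}_i$. Once this bookkeeping is in place, the remainder is a routine conjugation-and-average argument.
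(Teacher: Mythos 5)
Your proposal is correct and follows essentially the same route as the paper: both rest on the gauge-covariance relations $U_{\boldsymbol\theta}\widetilde{\bf\Lambda}_{i,j}U_{\boldsymbol\theta}^*=e^{i\theta_i}\widetilde{\bf\Lambda}_{i,j}$ and the induced intertwining for the Cauchy dual (the paper obtains the latter via $\widetilde{\bf\Lambda}_i'=(I_\cK\otimes{\bf\Omega}_i)\widetilde{\bf\Lambda}_i$ and relation \eqref{LAO}, you via the diagonality of $\widetilde{\bf\Lambda}_i^*\widetilde{\bf\Lambda}_i$, which amounts to the same thing), followed by passing the bounded linear maps on both sides of \eqref{BH} through the weak integral defining $T_{s_1,\ldots,s_k}$. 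Your reorganization into the separate lemma that $U_{\boldsymbol\theta}TU_{\boldsymbol\theta}^*$ satisfies \eqref{BH} is only a cosmetic difference from the paper's single chain of equalities under the integral.
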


\begin{proof}
As in the proof of Proposition \ref{WWW}, one can prove that if $s\in\{1,\ldots, k\}$,  $j\in \{1,\ldots, n_s\}$, and $f\in
\cK\otimes \bigotimes_{i=1}^k  F^2(H_{n_i})$, then
$$
\widetilde{\bf \Lambda}_{s,j} ^*(I_\cK\otimes {\bf \Omega}_s )\left(I_\cK\otimes
\Gamma (e^{i {\boldsymbol\theta}})\right)f=e^{i\theta_s} \left(I_\cK\otimes
\Gamma (e^{i {\boldsymbol\theta}})\right)\widetilde{\bf \Lambda}_{s,j}^* (I_\cK\otimes {\bf \Omega}_s )f.
$$
Hence, we deduce that
\begin{equation}
\label{LAO}
\widetilde{\bf \Lambda}_{s}^* (I_\cK\otimes {\bf \Omega}_s )\left(I_\cK\otimes
\Gamma (e^{i {\boldsymbol\theta}})\right)
=e^{i\theta_s} {\bf diag}_{n_i}\left(I_\cK\otimes
\Gamma (e^{i {\boldsymbol\theta}})\right)\widetilde{\bf \Lambda}_{s}^* (I_\cK\otimes {\bf \Omega}_s )
\end{equation}
for every $s\in\{1,\ldots, k\}$.

Note that
$$
\left(I_\cK\otimes
\Gamma (e^{i {\boldsymbol\theta}})\right)\widetilde{\bf \Lambda}_{s,j} =e^{i\theta_s}\widetilde{\bf \Lambda}_{s,j} \left(I_\cK\otimes
\Gamma (e^{i {\boldsymbol\theta}})\right)
$$
and, consequently,
$$
\left(I_\cK\otimes
\Gamma (e^{i {\boldsymbol\theta}})\right)\widetilde{\bf \Lambda}_{s,\alpha} T\widetilde{\bf \Lambda}_{s,\alpha} ^*
\left(I_\cK\otimes
\Gamma (e^{i {\boldsymbol\theta}})\right)^*
=\widetilde{\bf \Lambda}_{s,\alpha} \left(I_\cK\otimes
\Gamma (e^{i {\boldsymbol\theta}})\right)T \left(I_\cK\otimes \Gamma (e^{i {\boldsymbol\theta}})\right)^*
\widetilde{\bf \Lambda}_{s,\alpha} ^*
$$
for every $s\in \{1,\ldots, k\}$ and $\alpha\in \FF_{n_s}^+$.

Since $\widetilde{\bf \Lambda}_{i,j}:=I_\cK\otimes {\bf \Lambda}_{i,j}$ and  $\widetilde{\bf \Lambda}_i:=[\widetilde{\bf \Lambda}_{i,1}\cdots \widetilde{\bf \Lambda}_{i,n_i}]$, Proposition \ref{WWW} implies
$\widetilde{\bf \Lambda}_s^*(I_\cK\otimes {\bf \Omega}_s)=(\widetilde{\bf \Lambda}_i^*\widetilde{\bf \Lambda}_i)^{-1} \widetilde {\bf \Lambda}_s^*$
Consequently,
the Cauchy dual operator
$$\widetilde{\bf \Lambda}_s': \left(\cK\otimes\bigotimes_{i=1}^k F^2(H_{n_i})\right)^{(n_s)}\to \cK\otimes\bigotimes_{i=1}^k F^2(H_{n_i})$$
 defined by $\widetilde{\bf \Lambda}_s':=\widetilde{\bf \Lambda}_s(\widetilde{\bf \Lambda}_s^*\widetilde{\bf \Lambda}_s)^{-1}$ satisfies the relation
 $\widetilde{\bf \Lambda}_s'=(I_\cK\otimes {\bf \Omega}_s)\widetilde {\bf \Lambda}_s$.

Now, note that, for each $s\in \{1,\ldots, k\}$, we have
\begin{equation*}
\begin{split}
&\widetilde{\bf \Lambda}_s^{\prime *} T_{s_1,\ldots, s_k}\widetilde{\bf \Lambda}_s' \\
&=
\widetilde {\bf \Lambda}_s^*(I_\cK\otimes {\bf \Omega}_s) T_{s_1,\ldots, s_k}(I_\cK\otimes {\bf \Omega}_s)\widetilde {\bf \Lambda}_s\\
&=
\left(\frac{1}{2\pi}\right)^k
\int_0^{2\pi}\cdots \int_0^{2\pi} e^{-is_1\theta_1}\cdots e^{-is_k\theta_k}\widetilde {\bf \Lambda}_s^*(I_\cK\otimes {\bf \Omega}_s)\left(I_\cK\otimes \Gamma (e^{i {\boldsymbol\theta}})\right) T
\left(I_\cK\otimes \Gamma (e^{i {\boldsymbol\theta}})\right)^* (I_\cK\otimes {\bf \Omega}_s)\widetilde {\bf \Lambda}_sd\theta_1\ldots d\theta_k
\\
&=\left(\frac{1}{2\pi}\right)^k
\int_0^{2\pi}\cdots \int_0^{2\pi} e^{-is_1\theta_1}\cdots e^{-is_k\theta_k}
{\bf diag}_{n_s}\left(I_\cK\otimes
\Gamma (e^{i {\boldsymbol\theta}})\right)\widetilde{\bf \Lambda}_{s}^* (I_\cK\otimes {\bf \Omega}_s) T
 (I_\cK\otimes {\bf \Omega}_s)\widetilde{\bf \Lambda}_{s}{\bf diag}_{n_s}\left(I_\cK\otimes
\Gamma (e^{i {\boldsymbol\theta}})^*\right)
\\
&=\left(\frac{1}{2\pi}\right)^k
\int_0^{2\pi}\cdots \int_0^{2\pi} e^{-is_1\theta_1}\cdots e^{-is_k\theta_k}
{\bf diag}_{n_s}\left(I_\cK\otimes
\Gamma (e^{i {\boldsymbol\theta}})\right)\widetilde{\bf \Lambda}_s^{\prime *} T\widetilde{\bf \Lambda}_s'{\bf diag}_{n_s}\left(I_\cK\otimes
\Gamma (e^{i {\boldsymbol\theta}})^*\right)
\\
&=\left(\frac{1}{2\pi}\right)^k
\int_0^{2\pi}\cdots \int_0^{2\pi} e^{-is_1\theta_1}\cdots e^{-is_k\theta_k}
{\bf diag}_{n_s}\left(I_\cK\otimes
\Gamma (e^{i {\boldsymbol\theta}})\right)\\
&\qquad \qquad \times {\bf diag}_{n_s}\left( \sum_{j=0}^{m_s-1} (-1)^j \left(\begin{matrix}  m_s\\j+1
\end{matrix}\right)\sum_{\beta\in \FF_{n_s}^+, |\beta|=j} \widetilde{\bf \Lambda}_{s,\beta}T\widetilde{\bf \Lambda}_{s,\beta}^*\right){\bf diag}_{n_s}\left(I_\cK\otimes
\Gamma (e^{i {\boldsymbol\theta}})^*\right)
\\
&=\left(\frac{1}{2\pi}\right)^k
\int_0^{2\pi}\cdots \int_0^{2\pi} e^{-is_1\theta_1}\cdots e^{-is_k\theta_k}\\
&\qquad \qquad \times
 {\bf diag}_{n_s}\left( \sum_{j=0}^{m_s-1} (-1)^j \left(\begin{matrix}  m_s\\j+1
\end{matrix}\right)\sum_{\beta\in \FF_{n_s}^+, |\beta|=j} \left(I_\cK\otimes
\Gamma (e^{i {\boldsymbol\theta}})\right)\widetilde{\bf \Lambda}_{s,\beta}T\widetilde{\bf \Lambda}_{s,\beta}^*\left(I_\cK\otimes
\Gamma (e^{i {\boldsymbol\theta}})^*\right)\right) \\
&=\left(\frac{1}{2\pi}\right)^k
\int_0^{2\pi}\cdots \int_0^{2\pi} e^{-is_1\theta_1}\cdots e^{-is_k\theta_k}\\
&\qquad \qquad \times
 {\bf diag}_{n_s}\left( \sum_{j=0}^{m_s-1} (-1)^j \left(\begin{matrix}  m_s\\j+1
\end{matrix}\right)\sum_{\beta\in \FF_{n_s}^+, |\beta|=j} \widetilde{\bf \Lambda}_{s,\beta}\left(I_\cK\otimes
\Gamma (e^{i {\boldsymbol\theta}})\right)T\left(I_\cK\otimes
\Gamma (e^{i {\boldsymbol\theta}})^*\right)\right) \widetilde{\bf \Lambda}_{s,\beta}^*\\
&=
{\bf diag}_{n_s}\left( \sum_{j=0}^{m_s-1} (-1)^j \left(\begin{matrix}  m_s\\j+1
\end{matrix}\right)\sum_{\beta\in \FF_{n_s}^+, |\beta|=j} \widetilde{\bf \Lambda}_{s,\beta}T_{s_1,\ldots, s_k} \widetilde{\bf \Lambda}_{s,\beta}^*\right).
\end{split}
\end{equation*}
The proof is complete.
\end{proof}

In what follows we use the standard notation $s^+:=\max\{s,0\}$ and $s^-:=\max\{-s,0\}$ for every $s\in \ZZ$.

\begin{theorem} \label{multi-homo}
 Let $A\in B(\cK\otimes \bigotimes_{s=1}^k  F^2(H_{n_s}))$ be a  multi-homogeneous  operator of degree $(s_1,\ldots, s_k)\in \ZZ^k$
 and satisfying the Brown-Halmos condition \eqref{BH}. Then
 $$
 Af=q_{s_1,\ldots, s_k}({\bf W}, {\bf W}^*) f,\qquad f\in \cK\otimes \bigotimes_{s=1}^k  F^2(H_{n_s}),
 $$
 where
 $$
 q_{s_1,\ldots, s_k}({\bf W}, {\bf W}^*):=
\sum_{{\alpha_i,\beta_i\in \FF_{n_i}^+, i\in \{1,\ldots, k\}}\atop{|\alpha_i|=s_i^+, |\beta_i|=s_i^-}} A_{(\alpha_1,\ldots,\alpha_k,\beta_1,\ldots, \beta_k)}\otimes {\bf W}_{1,\alpha_1}\cdots {\bf W}_{k,\alpha_k}{\bf W}_{1,\beta_1}^*\cdots {\bf W}_{k,\beta_k}^*,
$$
and  the coefficients $A_{(\alpha_1,\ldots,\alpha_k,\beta_1,\ldots, \beta_k)}\in B(\cK)$ are given by
\begin{equation} \label{AA}
\left<A_{(\alpha_1,\ldots,\alpha_k,\beta_1,\ldots, \beta_k)}h,\ell\right>:=\left(\prod_{i=1}^k \sqrt{b_{i,\alpha_i}^{(m_i)} b_{i,\beta_i}^{(m_i)}}\right)\left<A(h\otimes x), \ell\otimes y\right>,\qquad  h,\ell\in \cK,
\end{equation}
where $x:=x_1\otimes \cdots \otimes x_k$, $y=y_1\otimes \cdots \otimes y_k$ with
\begin{equation*}
\begin{cases} x_i=e^i_{\beta_i} \text{ and } y_i=1,& \quad \text{if } s_i\leq 0\\
 x_i=1 \text{ and } y_i=e^i_{\alpha_i},& \quad \text{if } s_i>0
 \end{cases}
 \end{equation*}
 for every $i\in \{1,\ldots, k\}$.
\end{theorem}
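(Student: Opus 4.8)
The plan is to pin down the operator $A$ by testing it against the orthonormal basis vectors of $\cK\otimes\bigotimes_{s=1}^k F^2(H_{n_s})$ and showing that the Brown-Halmos equations force the matrix entries of $A$ to be exactly those of $q_{s_1,\ldots,s_k}({\bf W},{\bf W}^*)$. First I would record the elementary facts I need: since $A$ is multi-homogeneous of degree $(s_1,\ldots,s_k)$, it maps $\cK\otimes\cE_{p_1,\ldots,p_k}$ into $\cK\otimes\cE_{s_1+p_1,\ldots,s_k+p_k}$, so in particular $A(h\otimes e^1_{\gamma_1}\otimes\cdots\otimes e^k_{\gamma_k})$ lives in the span of $e^1_{\delta_1}\otimes\cdots\otimes e^k_{\delta_k}$ with $|\delta_i|=|\gamma_i|+s_i$; this already forces the coefficient to vanish unless $|\gamma_i|\geq s_i^-$ for all $i$. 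I would also compute the action of ${\bf W}_{i,\beta}$ and ${\bf W}_{i,\beta}^*$ on basis vectors from the formulas in Section 1, and observe that the right-hand side $q_{s_1,\ldots,s_k}({\bf W},{\bf W}^*)$ is itself multi-homogeneous of degree $(s_1,\ldots,s_k)$, so it suffices to match matrix entries between basis vectors of the ``correct'' relative degrees.

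The heart of the argument is a descent/induction on the multi-degrees $(|\gamma_1|,\ldots,|\gamma_k|)$ of the test vectors, using the Brown-Halmos identity \eqref{BH} one coordinate at a time. Rewriting \eqref{BH} via Proposition \ref{WWW}(iii) and the relation $\widetilde{\bf\Lambda}_s' = (I_\cK\otimes{\bf\Omega}_s)\widetilde{\bf\Lambda}_s$ established in the proof of Theorem \ref{BH-cond}, the identity expresses $\widetilde{\bf\Lambda}_s^{\prime*}A\widetilde{\bf\Lambda}_s'$ as an alternating sum of the ``two-sided shifts'' $\widetilde{\bf\Lambda}_{s,\beta}A\widetilde{\bf\Lambda}_{s,\beta}^*$. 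Taking inner products $\left<(\widetilde{\bf\Lambda}_s^{\prime*}A\widetilde{\bf\Lambda}_s')(h\otimes x), \ell\otimes y\right>$ for suitable basis vectors $x,y$, the left side produces a coefficient of $A$ at strictly smaller multi-degree in the $s$-th slot (because $\Lambda_{s,j}^*$ lowers length), while the right side is a combinatorial combination of coefficients of $A$; solving for the new coefficient and using the binomial identity $\sum_{j}(-1)^j\binom{m_s}{j+1}\binom{\ell+j}{\ell}\cdot(\text{weights}) $ — the same identity implicit in the definition of ${\bf\Omega}_s$ and in Proposition \ref{WWW} — pins the value. Iterating over the length in slot $s$ down to the base case ($|\gamma_s|=s_s^-$, where the homogeneity constraint and \eqref{AA} give the value directly), and then over $s=1,\ldots,k$, determines every matrix entry of $A$ and shows it agrees with that of $q_{s_1,\ldots,s_k}({\bf W},{\bf W}^*)$. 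One must also check the normalization constants $\sqrt{b_{i,\alpha_i}^{(m_i)}b_{i,\beta_i}^{(m_i)}}$ in \eqref{AA} come out correctly, which is a matter of tracking the weights $b$ through the $W$-formulas.

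Finally I would address boundedness and convergence: $q_{s_1,\ldots,s_k}({\bf W},{\bf W}^*)$ is defined as a sum over $\alpha_i,\beta_i$ of fixed lengths $s_i^+,s_i^-$, so for each fixed input vector only finitely many $\beta$-terms act nontrivially on each homogeneous component, and the identity $A = q_{s_1,\ldots,s_k}({\bf W},{\bf W}^*)$ holds on the dense subspace of finitely-supported vectors; since $A$ is bounded by hypothesis, this already gives the stated equality on all of $\cK\otimes\bigotimes_{s=1}^k F^2(H_{n_s})$, with $q_{s_1,\ldots,s_k}({\bf W},{\bf W}^*)$ inheriting the bound $\|A\|$. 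The main obstacle I anticipate is the bookkeeping in the induction: correctly isolating a single coefficient from the alternating binomial sum in \eqref{BH}, managing the $k$ coordinates simultaneously (the shifts in different slots commute, so one can peel them off one at a time, but the base cases interact), and verifying that the weight ratios telescope to exactly the product $\prod_i\sqrt{b_{i,\alpha_i}^{(m_i)}b_{i,\beta_i}^{(m_i)}}$ appearing in the formula for the coefficients.
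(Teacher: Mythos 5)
Your proposal is correct in substance and follows essentially the same route as the paper: an induction on the degrees of the homogeneous test subspaces, handled one slot at a time, with the base case (degree $s_i^-$ in each slot) read off directly from \eqref{AA} together with multi-homogeneity, and the inductive step extracted from the Brown-Halmos identity via Proposition \ref{WWW}, the relation $\widetilde{\bf \Lambda}_s'=(I_\cK\otimes {\bf \Omega}_s)\widetilde{\bf \Lambda}_s$, and the scalar action of ${\bf \Lambda}_s{\bf \Lambda}_s^*$ on each homogeneous component. The only slip is directional: after sandwiching against test vectors, it is the right-hand side terms $\widetilde{\bf \Lambda}_{s,\beta}A\widetilde{\bf \Lambda}_{s,\beta}^*$ that involve $A$ at strictly lower degree in slot $s$ (where the induction hypothesis applies), while the left-hand side produces the new, higher-degree entry one solves for --- exactly as in the paper's computation.
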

\begin{proof}
Fix $(s_1,\ldots, s_k)\in \ZZ^k$ and let us prove that
\begin{equation}
\label{M}
Af=q_{s_1,\ldots, s_k}({\bf W}, {\bf W}^*) f,\qquad f\in  \cM,
\end{equation}
for every subspace $\cM$ of the form $\cK\otimes \cE_{p_1,\ldots, p_k}$, where  $p_1,\ldots, p_k\in \{0,1,\ldots \}$.

{\bf Case I.}  Assume that there is $i_0\in \{1,\ldots, k\}$ such that  $s_{i_0}<0$.

 If $p_{i_0}<s_{i_0}^-$, we have  $s_{i_0}+ p_{i_0}<0 $ and
$$A(\cK\otimes \cE_{p_1,\ldots, p_k})\subset \cK\otimes \cE_{s_1+p_1,\ldots, s_k+p_k}=\{0\}.
$$
Since $q_{s_1,\ldots, s_k}({\bf W}, {\bf W}^*) f=0$ for every $f\in \cK\otimes \cE_{p_1,\ldots, p_k}$, we deduce that
relation \eqref{M} holds for every subspace $\cM=\cK\otimes \cE_{p_1,\ldots, p_k}$, where $p_{i_0}<s_{i_0}^-$ and
$p_1,\ldots, p_{i_0-1}, p_{i_0+1},\ldots p_k\in \{0,1,\ldots \}$.

{\bf Case II.}   Assume that there is at least one $i_0\in \{1,\ldots, k\}$ such that $s_{i_0}\geq 0$.

Without loss of generality, we may
assume that there is $d\in \{1,\ldots, k\} $ such that $s_i\geq 0$ if $i\in \{1,\ldots, d\}$ and, if $d<k$, then $s_i<0$ for every $i\in \{d+1,\ldots, k\}$. In what follows we prove that relation \eqref{M} holds for every
$\cM=\cK\otimes \cE_{p_1,\ldots, p_k}$, where $p_1,\ldots, p_d\in \{0,1,\ldots\}$ and $p_j=s_j^-$ for $j\in\{d+1,\ldots, k\}$.

The first step is to prove relation \eqref{M} for every subspace $\cM=\cK\otimes \cE_{p_1,0,\ldots,0, s_{d+1}^-,\ldots,s_k^-}$ where $p_1\in \{0,1,2,\ldots\}$. We proceed by induction over $p_1$.
Let $p_1=0$ and note that if $f\in \cK\otimes\cE_{0,\ldots, 0, s_{d+1}^-,\ldots, s_k^-}$ then
$Af\in \cK\otimes \cE_{s_1,\ldots, s_d,0,\ldots, 0}$.
Let $\alpha_i,\beta_i\in \FF_{n_i}^+$ be such that
$$
|\alpha_i|=\begin{cases}s_i^+,& \  \text{ if } i\in \{1,\ldots, d\},\\
1, &\  \text{ if } i\in \{d+1,\ldots, k\},
\end{cases}
\quad \text{and}\quad
|\beta_i|=\begin{cases}1,& \  \text{ if } i\in \{1,\ldots, d\},\\
s_i^-, &\  \text{ if } i\in \{d+1,\ldots, k\}.
\end{cases}
$$
Note that, due to relation \eqref{AA} and the definition of the universal model ${\bf W}$, for every $h, \ell\in \cK$,  we have
\begin{equation*}
\begin{split}
&\left<A(h\otimes e_{\beta_1}^1\otimes\cdots \otimes e_{\beta_k}^k), \ell\otimes e_{\alpha_1}^1\otimes\cdots \otimes e_{\alpha_k}^k\right>\\
&\qquad\qquad=\left(\prod_{i=1}^k\frac{1}{ \sqrt{b_{i,\alpha_i}^{(m_i)} b_{i,\beta_i}^{(m_i)}}}\right)
\left<A_{(\alpha_1,\ldots,\alpha_k,\beta_1,\ldots, \beta_k)}h,\ell\right>\\
&\qquad\qquad=\left<q_{s_1,\ldots, s_k}({\bf W}, {\bf W}^*)(h\otimes e_{\beta_1}^1\otimes\cdots \otimes e_{\beta_k}^k), \ell\otimes e_{\alpha_1}^1\otimes\cdots \otimes e_{\alpha_k}^k\right>.
\end{split}
\end{equation*}
Hence, and using the fact that $q_{s_1,\ldots, s_k}({\bf W}, {\bf W}^*)\left(\cK\otimes \cE_{0,\ldots,0, s_{d+1}^-,\ldots,s_k^-}\right)\subset \cK\otimes \cE_{s_1^+,\ldots,s_d^+, 0,\ldots,0}$, we deduce that relation \eqref{M} holds for $\cM=\cK\otimes \cE_{0,\ldots,0, s_{d+1}^-,\ldots,s_k^-}$.
Now, let $q\in \NN$ and  assume that  relation \eqref{M} holds for $\cM=\cK\otimes \cE_{p_1, 0,\ldots,0, s_{d+1}^-,\ldots,s_k^-}$ for every $p_1\in \{0,1,\ldots, q\}$. Let $f\in \cK\otimes \cE_{q+1, 0,\ldots,0, s_{d+1}^-,\ldots,s_k^-}$. Using Proposition \ref{WWW} and the definition of the operator ${\bf \Omega}_1$, we obtain
\begin{equation}\label{triu}
\begin{split}
\widetilde{\bf \Lambda}_1^*(I_\cK\otimes {\bf \Omega}_1)A(I_\cK\otimes {\bf \Omega}_1)\widetilde{\bf \Lambda}_1\widetilde{\bf \Lambda}_1^*f
&=
\widetilde{\bf \Lambda}_1^*(I_\cK\otimes {\bf \Omega}_1)A\widetilde{\bf \Lambda}_1  (\widetilde{\bf \Lambda}_1^*\widetilde{\bf \Lambda}_1)^{-1}\widetilde{\bf \Lambda}_1^*f\\
&=
\widetilde{\bf \Lambda}_1^*(I_\cK\otimes {\bf \Omega}_1)A\left( {\bf P}_{\text{\rm range} \widetilde{\bf \Lambda}_1}\otimes I_{F^2(H_{n_2})}\otimes \cdots \otimes I_{F^2(H_{n_k})}\right)f\\
&=
\widetilde{\bf \Lambda}_1^*(I_\cK\otimes {\bf \Omega}_1)Af\\
&=
\frac{m_1+q+s_1}{q+s_1+1}\widetilde{\bf \Lambda}_1^*Af.
\end{split}
\end{equation}
Using  again Proposition \ref{WWW} (see items (i) and (iii)), we deduce that
\begin{equation}
\label{dinti}
\widetilde{\bf \Lambda}_1^*(I_\cK\otimes {\bf \Omega}_1)={\bf diag}_{n_1}\left({\bf \Psi_{\widetilde {\bf \Lambda}_1}}(I) \right)\widetilde{\bf \Lambda}_1^*,
\end{equation}
where
$$
{\bf \Psi_{\widetilde {\bf \Lambda}_1}}(X):={\bf diag}_{n_1}\left( \sum_{j=0}^{m_1-1} (-1)^j \left(\begin{matrix}  m_1\\j+1
\end{matrix}\right)\sum_{\beta\in \FF_{n_1}^+, |\beta|=j} {\bf \Lambda}_{i,\beta} X{\bf \Lambda}_{i,\beta}^*\right), \qquad X\in B(\otimes_{s=1}^k F^2(H_{n_s})).
$$
Note that, for every $\beta\in \FF_{n_1}^+$, $j\in \{1,\ldots, n_1\}$, we have  $\Lambda_{1,\beta}^*\Lambda_{1,g_j}^*f\in \cE_{p,0,\ldots,0,s_{d+1}^-,\ldots, s_k^-}$, where  $p\in \{0,\ldots, q\}$. Due to the induction hypothesis, we have  $A=q_{s_1,\ldots, s_k}({\bf W}, {\bf W}^*)$ on the subspaces $\cK\otimes \cE_{p_1,0,\ldots,0,s_{d+1}^-,\ldots, s_k^-}$ with $p_1\in\{0,\ldots, q\}$. Consequently,

\begin{equation}
\label{2c}
{\bf diag}_{n_1}\left({\bf \Psi_{\widetilde {\bf \Lambda}_1}}(A) \right)\widetilde{\bf \Lambda}_1^*f=
{\bf diag}_{n_1}\left({\bf \Psi_{\widetilde {\bf \Lambda}_1}}(q_{s_1,\ldots, s_k}({\bf W}, {\bf W}^*)) \right)\widetilde{\bf \Lambda}_1^*f
\end{equation}
Using the equation \eqref{dinti} and  the definition of  ${\bf \Omega}_1$, we deduce that
\begin{equation*}
\begin{split}
{\bf diag}_{n_1}\left({\bf \Psi_{\widetilde {\bf \Lambda}_1}}(q_{s_1,\ldots, s_k}({\bf W}, {\bf W}^*)) \right)\widetilde{\bf \Lambda}_1^*f
&=
{\bf diag}_{n_1}\left(q_{s_1,\ldots, s_k}({\bf W}, {\bf W}^*){\bf \Psi_{\widetilde {\bf \Lambda}_1}}(I) \right)\widetilde{\bf \Lambda}_1^*f\\
&=
{\bf diag}_{n_1}\left(q_{s_1,\ldots, s_k}({\bf W}, {\bf W}^*)\right){\bf diag}_{n_1}\left({\bf \Psi_{\widetilde {\bf \Lambda}_1}}(I) \right)\widetilde{\bf \Lambda}_1^*f\\
&=
{\bf diag}_{n_1}\left(q_{s_1,\ldots, s_k}({\bf W}, {\bf W}^*)\right)\widetilde{\bf \Lambda}_1^*(I_\cK\otimes {\bf \Omega}_1f\\
&=\frac{m_1+q}{q+1}{\bf diag}_{n_1}\left(q_{s_1,\ldots, s_k}({\bf W}, {\bf W}^*)\right)\widetilde{\bf \Lambda}_1^* f.
\end{split}
\end{equation*}
Hence, using relations \eqref{triu}, \eqref{2c}, and the fact that $A$ satisfies the Brown-Halmos condition \eqref{BH} (when $i=1$), i.e
$$
\widetilde{\bf \Lambda}_1^*(I_\cK\otimes {\bf \Omega}_1)A (I_\cK\otimes {\bf \Omega}_1)\widetilde{\bf \Lambda}_1
={\bf diag}_{n_1}\left({\bf \Psi_{\widetilde {\bf \Lambda}_1}}(A)\right),
$$
we deduce that
\begin{equation*}
\begin{split}
\frac{m_1+q+s_1}{q+s_1+1} \widetilde{\bf \Lambda}_1^*Af
&=
\widetilde{\bf \Lambda}_1^*(I_\cK\otimes {\bf \Omega}_1)A (I_\cK\otimes {\bf \Omega}_1)\widetilde{\bf \Lambda}_1
\widetilde{\bf \Lambda}_1^*f\\
&=
{\bf diag}_{n_1}\left({\bf \Psi_{\widetilde {\bf \Lambda}_1}}(A)\right)\widetilde{\bf \Lambda}_1^*f\\
&=
{\bf diag}_{n_1}\left({\bf \Psi_{\widetilde {\bf \Lambda}_1}}(q_{s_1,\ldots, s_k}({\bf W}, {\bf W}^*)) \right)\widetilde{\bf \Lambda}_1^*f\\
&=\frac{m_1+q}{q+1}{\bf diag}_{n_1}\left(q_{s_1,\ldots, s_k}({\bf W}, {\bf W}^*)\right)\widetilde{\bf \Lambda}_1^* f.
\end{split}
\end{equation*}
Applying the operator $
\widetilde{\bf \Lambda}_1(\widetilde{\bf \Lambda}_1^*\widetilde{\bf \Lambda}_1)^{-1}=(I_\cK\otimes {\bf \Omega}_1)\widetilde{\bf \Lambda}_1
$
 (see Proposition \ref{WWW}) to both sides of the relation above and taking into account that
$\widetilde{\bf \Lambda}_1(\widetilde{\bf \Lambda}_1^*\widetilde{\bf \Lambda}_1)^{-1}\widetilde{\bf \Lambda}_1^*$ is the orthogonal projection of $\cK\otimes \bigotimes_{s=1}^k  F^2(H_{n_s})$
onto $\cK\otimes (F^2(H_{n_1})\ominus \CC)\otimes F^2(H_{n_2})\otimes \cdots\otimes F^2(H_{n_k})$, we obtain
\begin{equation}
\label{mqs}
\begin{split}
\frac{m_1+q+s_1}{q+s_1+1} Af &= \widetilde{\bf \Lambda}_1(\widetilde{\bf \Lambda}_1^*\widetilde{\bf \Lambda}_1)^{-1}\left(\frac{m_1+q+s_1}{q+s_1+1} \widetilde{\bf \Lambda}_1^*Af\right)\\
&=
\frac{m_1+q}{q+1}(I_\cK\otimes {\bf \Omega}_1)\widetilde{\bf \Lambda}_1{\bf diag}_{n_1}\left(q_{s_1,\ldots, s_k}({\bf W}, {\bf W}^*)\right)\widetilde{\bf \Lambda}_1^* f\\
&=
\frac{m_1+q}{q+1}\frac{m_1+q+s_1}{q+s_1+1}q_{s_1,\ldots, s_k}({\bf W}, {\bf W}^*)\widetilde{\bf \Lambda}_1\widetilde{\bf \Lambda}_1^*f.
\end{split}
\end{equation}
The latter equality is due to the relation $\Lambda_{1,j} W_{1,j'}=W_{1,j'}\Lambda_{1,j}$ for every $j,j'\in \{1,\ldots,n_1\}$,  the definition of ${\bf \Omega}_1$, and the fact that
$q_{s_1,\ldots, s_k}({\bf W}, {\bf W}^*)\widetilde{\bf \Lambda}_1\widetilde{\bf \Lambda}_1^*f\in \cK\otimes
\cE_{q+1+s_1, s_2,\ldots, s_d, 0,\ldots,0}$.
A careful calculation reveals  that
\begin{equation*}
 {\bf \Lambda}_1 {\bf \Lambda}_1^*=\left(\sum_{j=1}^\infty \frac{1}{m_1+j-1} {\bf P}_{\{\text{\rm span}\,  e^1_\alpha: \ \alpha\in \FF_{n_1}^+, |\alpha|=j\}}\right) \otimes I_{F^2(H_{n_2})}\otimes\cdots\otimes I_{F^2(H_{n_k})}.
\end{equation*}
Since $f\in \cK\otimes
\cE_{q+1, 0,\ldots, 0, s_{d+1}^-,\ldots,s_k^-}$, we deduce that ${\bf \Lambda}_1{\bf \Lambda}_1^*f=\frac{q+1}{m_1+q}f$.
Consequently, relation \eqref{mqs} implies
$Af=q_{s_1,\ldots, s_k}({\bf W}, {\bf W}^*) f$ for every $f\in \cK\otimes
\cE_{q+1, 0,\ldots, 0, s_{d+1}^-,\ldots,s_k^-}$, which completes the induction.
In a similar manner, replacing $\widetilde{\bf \Lambda}_1$, $I_\cK\otimes {\bf \Omega}_1$, ${\bf \Psi}_{\widetilde{\bf \Lambda}_1}$ with $\widetilde{\bf \Lambda}_i$, $I_\cK\otimes {\bf \Omega}_i$, ${\bf \Psi}_{\widetilde{\bf \Lambda}_i}$ respectively, as $i=1,\ldots, d$, we can prove by induction over $p_i\in \{0,1, \ldots\}$ that relation \eqref{M} holds for every subspace $\cM$ of the form
$\cK\otimes
\cE_{p_1, \ldots, p_d, s_{d+1}^-,\ldots,s_k^-}$, where $p_1,\ldots, p_d\in \{0,1,\ldots\}$.

We remark that if $d=k$, the proof of the theorem is complete. Assume now  that $1\leq d<k$. We prove by induction that
relation \eqref{M} holds for every
$f\in \cK\otimes
\cE_{p_1, \ldots, p_d, q_{d+1},s_{d+2}^-,\ldots,s_k^-}$ for every
$p_1,\ldots, p_d\in \{0,1,\ldots\}$ and any $q_{d+1}\geq s_{d+1}^-$.
Let $q\geq s_{d+1}^-$ and assume that the relation above holds for every $q_{d+1}\in \{s_{d+1}^-, s_{d+1}^-+1,\ldots, q\}$ and any
$p_1,\ldots, p_d\in \{0,1,\ldots\}$.
We need to show that
$f\in \cK\otimes
\cE_{p_1, \ldots, p_d, q +1,s_{d+2}^-,\ldots,s_k^-}$ for every
$p_1,\ldots, p_d\in \{0,1,\ldots\}$.
To this end, assume that $f\in \cK\otimes
\cE_{p_1, \ldots, p_d, q +1,s_{d+2}^-,\ldots,s_k^-}$ and note that
$Af\in \cK\otimes
\cE_{p_1+s_1, \ldots, p_d+s_d, q +1+s_{d+1},0,\ldots,0}$. As in the first part of the proof, replacing $\widetilde{\bf \Lambda}_1$, $I_\cK\otimes {\bf \Omega}_1$, ${\bf \Psi}_{\widetilde{\bf \Lambda}_1}$ with $\widetilde{\bf \Lambda}_{d+1}$, $I_\cK\otimes {\bf \Omega}_{d+1}$, ${\bf \Psi}_{\widetilde{\bf \Lambda}_{d+1}}$ respectively, we deduce that
\begin{equation}
\label{triu2}
\widetilde{\bf \Lambda}_{d+1}^*(I_\cK\otimes {\bf \Omega}_1)A(I_\cK\otimes {\bf \Omega}_{d+1})\widetilde{\bf \Lambda}_{d+1}\widetilde{\bf \Lambda}_{d+1}^*f
=\frac{m_{d+1}+q+s_{d+1}}{q+s_{d+1}+1}\widetilde{\bf \Lambda}_{d+1}^*Af
\end{equation}
and
\begin{equation}
\label{2c2}
{\bf diag}_{n_{d+1}}\left({\bf \Psi_{\widetilde {\bf \Lambda}_{d+1}}}(A) \right)\widetilde{\bf \Lambda}_{d+1}^*f=
{\bf diag}_{n_{d+1}}\left({\bf \Psi_{\widetilde {\bf \Lambda}_{d+1}}}(q_{s_1,\ldots, s_k}({\bf W}, {\bf W}^*)) \right)\widetilde{\bf \Lambda}_{d+1}^*f
\end{equation}
due to the induction hypothesis.
Since
\begin{equation*}
\widetilde{\bf \Lambda}_{d+1}^*(I_\cK\otimes {\bf \Omega}_{d+1})={\bf diag}_{n_{d+1}}\left({\bf \Psi_{\widetilde {\bf \Lambda}_{d+1}}}(I) \right)\widetilde{\bf \Lambda}_{d+1}^*,
\end{equation*}
and following the same type of arguments as in the first part of the proof, we deduce that
\begin{equation*}
\begin{split}
{\bf diag}_{n_{d+1}}\left({\bf \Psi_{\widetilde {\bf \Lambda}_{d+1}}}(q_{s_1,\ldots, s_k}({\bf W}, {\bf W}^*)) \right)\widetilde{\bf \Lambda}_{d+1}^*f
&=
{\bf diag}_{n_{d+1}}\left({\bf \Psi_{\widetilde {\bf \Lambda}_{d+1}}}(I)q_{s_1,\ldots, s_k}({\bf W}, {\bf W}^*) \right)\widetilde{\bf \Lambda}_{d+1}^*f\\
&=
{\bf diag}_{n_{d+1}}\left({\bf \Psi_{\widetilde {\bf \Lambda}_1}}(I) \right) \widetilde{\bf \Lambda}_{d+1}^*q_{s_1,\ldots, s_k}({\bf W}, {\bf W}^*) f\\
&= \widetilde{\bf \Lambda}_{d+1}^*(I_\cK\otimes {\bf \Omega}_{d+1})q_{s_1,\ldots, s_k}({\bf W}, {\bf W}^*) f
 \\
 &= \frac{m_{d+1}+q+s_{d+1}}{q+1+s_{d+1}}
 \widetilde{\bf \Lambda}_{d+1}^* q_{s_1,\ldots, s_k}({\bf W}, {\bf W}^*) f
 \\
&=\frac{m_{d+1}+q+s_{d+1}}{q+1+s_{d+1}}
{\bf diag}_{n_{d+1}}\left(q_{s_1,\ldots, s_k}({\bf W}, {\bf W}^*)\right)\widetilde{\bf \Lambda}_{d+1}^* f.
\end{split}
\end{equation*}
Consequently, using relations \eqref{triu2}, \eqref{2c2}, and the Brown-Halmos condition, we deduce that
\begin{equation}\label{3c}
\begin{split}
\frac{m_{d+1}+q+s_{d+1}}{q+s_{d+1}+1} \widetilde{\bf \Lambda}_{d+1}^*Af
 =\left(\frac{m_{d+1}+q+s_{d+1}}{q+1+s_{d+1}}\right)^2 \widetilde{\bf \Lambda}_{d+1}\widetilde{\bf \Lambda}_{d+1}^* q_{s_1,\ldots, s_k}({\bf W}, {\bf W}^*) f.
\end{split}
\end{equation}
Using the fact that ${\bf \Lambda}_{d+1} {\bf \Lambda}_{d+1}^*$ is equal to
\begin{equation*}
 I_{F^2(H_{n_1})}\otimes\cdots I_{F^2(H_{n_{d}})}\otimes
 \left(\sum_{j=1}^\infty \frac{1}{m_{d+1}+j-1} {\bf P}_{\{\text{\rm span}\,  e^{d+1}_\alpha: \ \alpha\in \FF_{n_{d+1}}^+, |\alpha|=j\}}\right) \otimes I_{F^2(H_{n_{d+2}})}\otimes\cdots\otimes I_{F^2(H_{n_k})}
\end{equation*}
and $q_{s_1,\ldots, s_k}({\bf W}, {\bf W}^*) f\in \cK\otimes \cE_{p_1+s_1,\ldots p_d+s_d< q+1,0\ldots,o}$ we have
$$
{\bf \Lambda}_{d+1} {\bf \Lambda}_{d+1}^*q_{s_1,\ldots, s_k}({\bf W}, {\bf W}^*) f=\frac{q+1+s_{d+1}}{m_{d+1}+q+s_{d+1}}
q_{s_1,\ldots, s_k}({\bf W}, {\bf W}^*) f.
$$
Consequently, relation \eqref{3c} implies
$Af=q_{s_1,\ldots, s_k}({\bf W}, {\bf W}^*) f$ for every $f\in \cK\otimes
\cE_{p_1, \ldots, p_d,q+1, s_{d+2}^-,\ldots,s_k^-}$.
Therefore relation \eqref{M} holds for every
 $f\in \cK\otimes
\cE_{p_1, \ldots, p_d,p_{d+1}, s_{d+2}^-,\ldots,s_k^-}$ and every
$p_1,\ldots, p_{d+1}\in \{0,1,\ldots \}$.
Continuing this process, we conclude that relation \eqref{M} holds.

{\bf Case III:} for every $i\in \{1,\ldots, k\}$, $s_i<0$.

The proof of the theorem is similar to the one above in the case $d<k$. The proof is complete.
\end{proof}

The radial part of ${\bf D_n^m}$ is the noncommutative domain ${\bf D}_{{\bf n},rad}^{\bf m}$ whose representation on any Hilbert space $\cH$ is ${\bf D}_{{\bf n},rad}^{\bf m}(\cH):= \cup_{r\in [0,1)}r{\bf D_n^m}(\cH)$.

\begin{definition}We say that $F$ is a free  $k$-pluriharmonic function on the radial part of  ${\bf D_n^m}$  with coefficients in $B(\cK)$,  if its representation on a Hilbert space $\cH$ has the form
$$
F({\bf X})=\sum_{s_1\in \ZZ}\cdots \sum_{s_k\in \ZZ}
\sum_{{\alpha_i,\beta_i\in \FF_{n_i}^+, i\in \{1,\ldots, k\}}\atop{|\alpha_i|=s_i^+, |\beta_i|=s_i^-}} A_{(\alpha_1,\ldots,\alpha_k,\beta_1,\ldots, \beta_k)}\otimes {\bf X}_{1,\alpha_1}\cdots {\bf X}_{k,\alpha_k}{\bf X}_{1,\beta_1}^*\cdots {\bf X}_{k,\beta_k}^*
$$
for every ${\bf X}\in {\bf D}_{{\bf n},rad}^{\bf m}(\cH)$, where the convergence is in the operator norm topology.
\end{definition}

An application  of the noncommutative  Berezin transforms associated with poly-hyperballs reveals  that $F$ is a free $k$-pluriharmonic function on the radial part of  ${\bf D_n^m}$  with coefficients in $B(\cK)$,  if and only if the series
$$
\sum_{s_1\in \ZZ}\cdots \sum_{s_k\in \ZZ}
\sum_{{\alpha_i,\beta_i\in \FF_{n_i}^+, i\in \{1,\ldots, k\}}\atop{|\alpha_i|=s_i^+, |\beta_i|=s_i^-}} r^{|\alpha_1|+\cdots +|\alpha_k|+|\beta_1|+\cdots+ |\beta_k|}A_{(\alpha_1,\ldots,\alpha_k,\beta_1,\ldots, \beta_k)}\otimes {\bf W}_{1,\alpha_1}\cdots {\bf W}_{k,\alpha_k}{\bf W}_{1,\beta_1}^*\cdots {\bf W}_{k,\beta_k}^*
$$
  convergences  in the operator norm topology for every $r\in [0,1)$.
A free  $k$-pluriharmonic function on the radial part of  ${\bf D_n^m}$  is said to be bounded if
$$\|F\|:=\sup_{{\bf X} \in {\bf D}_{{\bf n},rad}^{\bf m}(\cH)}||F({\bf X} )\|<\infty,
$$
where the supremum is taken over all Hilbert spaces $\cH$.

\begin{lemma}\label{qBH}  If  $(s_1,\ldots, s_k)\in \ZZ^k$, the  operator
$$
 q_{s_1,\ldots, s_k}({\bf W}, {\bf W}^*):=
\sum_{{\alpha_i,\beta_i\in \FF_{n_i}^+, i\in \{1,\ldots, k\}}\atop{|\alpha_i|=s_i^+, |\beta_i|=s_i^-}} A_{(\alpha_1,\ldots,\alpha_k,\beta_1,\ldots, \beta_k)}\otimes {\bf W}_{1,\alpha_1}\cdots {\bf W}_{k,\alpha_k}{\bf W}_{1,\beta_1}^*\cdots {\bf W}_{k,\beta_k}^*,
$$
where  $A_{(\alpha_1,\ldots,\alpha_k,\beta_1,\ldots, \beta_k)}\in B(\cK)$, satisfies the Brown-Halmos condition \eqref{BH}.
\end{lemma}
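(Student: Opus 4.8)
The plan is to verify the Brown-Halmos equation \eqref{BH} directly for the single multi-homogeneous operator $q_{s_1,\ldots, s_k}({\bf W}, {\bf W}^*)$ by a computation on the spectral subspaces. Fix $i\in\{1,\ldots, k\}$. The key structural observation is that the left universal model ${\bf W}_{i,j}$ and the right universal model ${\bf \Lambda}_{i,j}$ live on the same tensor factor $F^2(H_{n_i})$ and commute with each other, while all ${\bf W}_{s,\cdot}$ and ${\bf \Lambda}_{s,\cdot}$ for $s\neq i$ act on other tensor factors and hence commute with everything carrying index $i$. Thus in the expression ${\bf \Lambda}_i^{\prime *}\,q_{s_1,\ldots,s_k}({\bf W},{\bf W}^*)\,{\bf \Lambda}_i'$ all the factors ${\bf W}_{s,\alpha_s}{\bf W}_{s,\beta_s}^*$ with $s\neq i$ pass through the ${\bf \Lambda}_i'$'s unchanged, and likewise on the right-hand side of \eqref{BH} they pass through the compression by ${\bf \Lambda}_{i,\beta}$. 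Consequently the whole identity reduces to the case $k=1$: it suffices to check that for each multi-index the operator $W_{i,\alpha_i}W_{i,\beta_i}^*$ on $F^2(H_{n_i})$ satisfies
\[
\Lambda_i^{\prime *}\bigl(W_{i,\alpha_i}W_{i,\beta_i}^*\bigr)\Lambda_i'
={\bf diag}_{n_i}\!\left(\sum_{j=0}^{m_i-1}(-1)^j\binom{m_i}{j+1}\sum_{|\gamma|=j}\Lambda_{i,\gamma}W_{i,\alpha_i}W_{i,\beta_i}^*\Lambda_{i,\gamma}^*\right),
\]
which is exactly the scalar ($\cK=\CC$) instance of what must be proved, for a single weighted creation monomial.

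To establish this scalar identity I would use two facts already in the excerpt. First, by Proposition \ref{WWW}(i) and (iii), $\Lambda_i'=\Omega_i\Lambda_i$ and $\Lambda_i^{\prime *}=\Lambda_i^*\Omega_i$ with $(\Lambda_i^*\Lambda_i)^{-1}\Lambda_i^*=\Lambda_i^*\Omega_i$, and moreover (Proposition \ref{WWW}(ii)) $\Lambda_i(\Lambda_i^*\Lambda_i)^{-1}\Lambda_i^*$ is the orthogonal projection of $F^2(H_{n_i})$ onto $F^2(H_{n_i})\ominus\CC$, equivalently $I-(id-\Phi_{\Lambda_i})^{m_i}(I)$. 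Second, since $\Lambda_{i,j}W_{i,p}=W_{i,p}\Lambda_{i,j}$, the row ${\bf diag}_{n_i}$-operator on the right is $\Lambda_i^*\bigl(\sum_{\gamma}(-1)^{|\gamma|}\binom{m_i}{|\gamma|+1}\Lambda_{i,\gamma}W_{i,\alpha_i}W_{i,\beta_i}^*\Lambda_{i,\gamma}^*\bigr)\Lambda_i$ after sandwiching with $\Lambda_i$ and using $\operatorname{range}\Lambda_i^*=F^2(H_{n_i})^{(n_i)}$, just as in the proof of Proposition \ref{WWW}(iii); and the operator inside is precisely $\bigl(I-(id-\Phi_{\Lambda_i})^{m_i}(\cdot)\bigr)$ applied to $W_{i,\alpha_i}W_{i,\beta_i}^*$ written out via the binomial expansion of $(id-\Phi_{\Lambda_i})^{m_i}$. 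So both sides, after multiplying by $\Lambda_i$ on the left and $\Lambda_i^*$ on the right, become an assertion about the two-variable "Toeplitz-type" compression $P(W_{i,\alpha_i}W_{i,\beta_i}^*)P$ where $P=I-(id-\Phi_{\Lambda_i})^{m_i}(I)$, and the claim is that $\Phi_{\Lambda_i}\bigl(W_{i,\alpha_i}W_{i,\beta_i}^*\bigr)$ and $W_{i,\alpha_i}W_{i,\beta_i}^*$ interact with the defect in the expected telescoping way. I would prove this by evaluating on the basis $e^i_\gamma$: using the explicit actions $W_{i,\beta}e^i_\gamma=\sqrt{b_{i,\gamma}^{(m_i)}/b_{i,\beta\gamma}^{(m_i)}}\,e^i_{\beta\gamma}$ and $\Lambda_{i,\mu}^* e^i_\alpha=\sqrt{b_{i,\gamma}^{(m_i)}/b_{i,\alpha}^{(m_i)}}\,e^i_\gamma$ when $\alpha=\gamma\tilde\mu$, together with the combinatorial identity $\sum_{|\mu|=p}1=n_i^p$ and the key weight identity for the numbers $b_{i,\beta}^{(m_i)}=\binom{|\beta|+m_i-1}{m_i-1}$ that underlies $(id-\Phi_{{\bf \Lambda}_i})^{m_i}(I)={\bf P}_\CC$, both sides reduce to the same scalar coefficient times $e^i_{\cdot}$.

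Alternatively—and this is probably the cleanest route—I would avoid the basis computation entirely and simply invoke Theorem \ref{multi-homo} in reverse together with Theorem \ref{BH-cond}: one knows a priori from the structure of the problem that $q_{s_1,\ldots,s_k}({\bf W},{\bf W}^*)$ is bounded (it is a finite sum, for fixed $(s_1,\ldots,s_k)$, of products of the bounded operators ${\bf W}_{i,j}$ and their adjoints with coefficients in $B(\cK)$) and multi-homogeneous of degree $(s_1,\ldots,s_k)$; granting that the map $T\mapsto$ (its Fourier coefficients) is injective on the class of Brown-Halmos operators—which is what Theorem \ref{multi-homo} secretly encodes—one still needs the \emph{existence} direction, so this shortcut is circular unless the Brown-Halmos identity for monomials is checked independently. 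Hence the honest proof is the direct verification above. The main obstacle is purely bookkeeping: keeping track of the ${\bf diag}_{n_i}$ row structure and the reversal $\tilde\beta$ in the right creation operators, and matching the binomial coefficients $\binom{m_i}{j+1}$ coming from the expansion of $I-(id-\Phi_{{\bf \Lambda}_i})^{m_i}(I)$ against the weights $b_{i,\beta}^{(m_i)}$; once the $k$-variable problem is reduced to the one-variable, one-monomial case via the commutation relations, the remaining identity is exactly the weighted analogue on $F^2(H_{n_i})$ of the classical $S^*W_\alpha W_\beta^* S$ computation, and is handled by the same weight manipulations used to prove Proposition \ref{WWW}.
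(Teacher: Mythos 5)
Your overall strategy is sound and, in outline, close to the paper's: you work monomial by monomial, you observe that ${\bf \Lambda}_i'$ only touches the $i$-th tensor slot so everything reduces to a one-slot computation, and you cite the right tools (the commutation $\Lambda_{i,j}W_{i,p}=W_{i,p}\Lambda_{i,j}$ and Proposition \ref{WWW}). You are also right that appealing to Theorems \ref{BH-cond} and \ref{multi-homo} would be circular. But the decisive step — the one-slot identity — is never actually proved, and the way you frame it hides exactly the point on which it hinges. You state the reduced claim for a general product $W_{i,\alpha_i}W_{i,\beta_i}^*$ in the same slot and propose to handle it like ``the classical $S^*W_\alpha W_\beta^*S$ computation.'' For a genuinely mixed monomial in one slot that identity is \emph{false}: already on $H^2(\DD)$ the operator $T=SS^*$ does not satisfy $S^*TS=T$, and in your ``compression'' reformulation $PGP=G-(id-\Phi_{\Lambda})^{m}(G)$ one gets $SS^*$ on the left and $S^2S^{*2}$ on the right. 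What saves the lemma is precisely that $(\boldsymbol\alpha,\boldsymbol\beta)\in\cJ$ forces, in each slot $i$, either $\beta_i=g_0^i$ (when $s_i\geq 0$) or $\alpha_i=g_0^i$ (when $s_i<0$), so the slot-$i$ factor is purely analytic or purely co-analytic; your proposal never invokes this dichotomy, so the sketched basis verification, as described, cannot close.

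For comparison, the paper's proof is exactly the two-case commutation argument you gesture at in your middle paragraph, carried out in a few lines and with no basis computation: writing $G$ for a single monomial, if $s_i\geq 0$ one uses $G\widetilde{\bf \Lambda}_i=\widetilde{\bf \Lambda}_i\,{\bf diag}_{n_i}(G)$ and ${\bf diag}_{n_i}(G)\,{\bf diag}_{n_i}({\bf \Psi}_{\widetilde{\bf \Lambda}_i}(I))={\bf diag}_{n_i}({\bf \Psi}_{\widetilde{\bf \Lambda}_i}(G))$, together with $(\widetilde{\bf \Lambda}_i^*\widetilde{\bf \Lambda}_i)^{-1}\widetilde{\bf \Lambda}_i^*=\widetilde{\bf \Lambda}_i^*(I_\cK\otimes{\bf \Omega}_i)$ and $(I_\cK\otimes{\bf \Omega}_i)\widetilde{\bf \Lambda}_i=\widetilde{\bf \Lambda}_i\,{\bf diag}_{n_i}({\bf \Psi}_{\widetilde{\bf \Lambda}_i}(I))$ from Proposition \ref{WWW}; if $s_i<0$ one instead commutes on the other side, using $\widetilde{\bf \Lambda}_i^*G={\bf diag}_{n_i}(G)\widetilde{\bf \Lambda}_i^*$ and ${\bf diag}_{n_i}({\bf \Psi}_{\widetilde{\bf \Lambda}_i}(I))\,{\bf diag}_{n_i}(G)={\bf diag}_{n_i}({\bf \Psi}_{\widetilde{\bf \Lambda}_i}(G))$. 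So to repair your write-up you should make the case split $s_i\geq 0$ versus $s_i<0$ explicit, note that it is exactly what legitimizes the one-sided commutation of $G$ past ${\bf \Lambda}_{i,\beta}$, and then either run the short algebraic chain above or, if you insist on the basis computation, carry it out separately in the analytic and co-analytic cases; as it stands the key identity is asserted rather than proved, and the stated analogy points toward a statement that is not true.
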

\begin{proof}  Let $i\in \{1,\ldots, k\}$ and set $G:=A_{(\alpha_1,\ldots,\alpha_k;\beta_1,\ldots, \beta_k)}\otimes {\bf W}_{1,\alpha_1}\cdots {\bf W}_{k,\alpha_k}{\bf W}_{1,\beta_1}^*\cdots {\bf W}_{k,\beta_k}^*$.
If   $s_i\geq 0$, then  $G\widetilde{\bf \Lambda}_i=\widetilde{\bf \Lambda}_i {\bf diag}_{n_i}(G)$  and ${\bf diag}_{n_i}(G){\bf diag}_{n_i}\left({\bf \Psi_{\widetilde {\bf \Lambda}_i}}(I) \right)={\bf diag}_{n_i}\left({\bf \Psi_{\widetilde {\bf \Lambda}_i}}(G) \right)$. Using Proposition \ref{WWW},  we have
\begin{equation*}
\begin{split}
(\widetilde{\bf \Lambda}_i^*\widetilde{\bf \Lambda}_i)^{-1} \widetilde{\bf \Lambda}_i^*
G\widetilde{\bf \Lambda}_i(\widetilde{\bf \Lambda}_i^*\widetilde{\bf \Lambda}_i)^{-1}
&= (\widetilde{\bf \Lambda}_i^*\widetilde{\bf \Lambda}_i)^{-1} \widetilde{\bf \Lambda}_i^*
G(I_\cK\otimes {\bf \Omega}_i)\widetilde{\bf \Lambda}_i\\
&= (\widetilde{\bf \Lambda}_i^*\widetilde{\bf \Lambda}_i)^{-1} \widetilde{\bf \Lambda}_i^*
G\widetilde{\bf \Lambda}_i{\bf diag}_{n_i}\left({\bf \Psi_{\widetilde {\bf \Lambda}_i}}(I) \right)\\
&=
 (\widetilde{\bf \Lambda}_1^*\widetilde{\bf \Lambda}_i)^{-1}\widetilde{\bf \Lambda}_i^* \widetilde{\bf \Lambda}_i
{\bf diag}_{n_i}(G){\bf diag}_{n_i}\left({\bf \Psi_{\widetilde {\bf \Lambda}_i}}(I) \right)\\
&={\bf diag}_{n_i}\left({\bf \Psi_{\widetilde {\bf \Lambda}_i}}(G) \right).
\end{split}
\end{equation*}
If   $s_i< 0$, then  $ \widetilde{\bf \Lambda}_i^*G= {\bf diag}_{n_i}(G)\widetilde{\bf \Lambda}_i ^*$  and ${\bf diag}_{n_i}\left({\bf \Psi_{\widetilde {\bf \Lambda}_i}}(I) \right){\bf diag}_{n_i}(G)={\bf diag}_{n_i}\left({\bf \Psi_{\widetilde {\bf \Lambda}_i}}(G) \right)$.
Using Proposition \ref{WWW},  we have
\begin{equation*}
\begin{split}
(\widetilde{\bf \Lambda}_i^*\widetilde{\bf \Lambda}_i)^{-1} \widetilde{\bf \Lambda}_i^*
G\widetilde{\bf \Lambda}_i(\widetilde{\bf \Lambda}_i^*\widetilde{\bf \Lambda}_i)^{-1}
&=
\widetilde{\bf \Lambda}_i^*(I_\cK\otimes {\bf \Omega}_i)G\widetilde{\bf \Lambda}_i(\widetilde{\bf \Lambda}_i^*\widetilde{\bf \Lambda}_i)^{-1}
\\
&={\bf diag}_{n_i}\left({\bf \Psi_{\widetilde {\bf \Lambda}_i}}(I)\right)\widetilde{\bf \Lambda}_i^* G
\widetilde{\bf \Lambda}_i(\widetilde{\bf \Lambda}_i^*\widetilde{\bf \Lambda}_i)^{-1}\\
&={\bf diag}_{n_i}\left({\bf \Psi_{\widetilde {\bf \Lambda}_i}}(I) \right){\bf diag}_{n_i}(G)\widetilde{\bf \Lambda}_i^*\widetilde{\bf \Lambda}_i(\widetilde{\bf \Lambda}_i^*\widetilde{\bf \Lambda}_i)^{-1}\\
&={\bf diag}_{n_i}\left({\bf \Psi_{\widetilde {\bf \Lambda}_i}}(G) \right).
 \end{split}
\end{equation*}
The proof is complete.
 \end{proof}

Let $\cJ$ be  the set of all tuples $(\boldsymbol \alpha, \boldsymbol \beta):=(\alpha_1,\ldots,\alpha_k,\beta_1,\ldots, \beta_k)$,  where $\alpha_i,\beta_i\in \FF_{n_i}^+$ with $|\alpha_i|=s_i^+$, $ |\beta_i|=s_i^-$, and $s_i\in\ZZ$.  In what follows, we  also use the notation ${\bf W}_{\boldsymbol\alpha}:={\bf W}_{1,\alpha_1}\cdots {\bf W}_{k,\alpha_k}$  whenever $\boldsymbol\alpha:=(\alpha_1,\ldots, \alpha_k)\in {\bf F}_{\bf n}^+:=\FF_{n_1}^+\times\cdots \times \FF_{n_k}^+$.

The main result of this section is the following

\begin{theorem}\label{main}
If $T\in B(\cK\otimes \bigotimes_{s=1}^k  F^2(H_{n_s}))$, then the following statements are equivalent.
\begin{enumerate}
\item[(i)] $T$ satisfies the Brown-Halmos condition \eqref{BH}.
\item[(ii)] There is a unique bounded free  $k$-pluriharmonic  function $F$ on  the radial poly-hyperball ${\bf D}_{{\bf n},rad}^{\bf m}$ with coefficients in $B(\cK)$ such that
    $$T=\text{\rm SOT-}\lim_{r\to 1} F(r{\bf W}).$$
    \item[(iii)]
     $T\in\text{\rm span}\left\{ C\otimes {\bf W}_{\boldsymbol\alpha}{\bf W}_{\boldsymbol\beta}^* :\   C\in B(\cK),   (\boldsymbol \alpha, \boldsymbol \beta)\in \cJ\right\}^{-\text{\rm SOT}}$.
      \item[(iv)]
     $T\in\text{\rm span}\left\{ C\otimes {\bf W}_{\boldsymbol\alpha}{\bf W}_{\boldsymbol\beta}^* :\   C\in B(\cK),   (\boldsymbol \alpha, \boldsymbol \beta)\in \cJ\right\}^{-\text{\rm WOT}}$.
     \end{enumerate}

\end{theorem}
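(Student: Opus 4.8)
The plan is to prove the cycle of implications $(i)\Rightarrow(ii)\Rightarrow(iii)\Rightarrow(iv)\Rightarrow(i)$, using the homogeneous decomposition machinery assembled in this section as the engine for the hardest step, $(i)\Rightarrow(ii)$.

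\textbf{Step 1: $(i)\Rightarrow(ii)$.} Assume $T$ satisfies the Brown--Halmos condition \eqref{BH}. By Theorem \ref{BH-cond}, every multi-homogeneous part $T_{s_1,\ldots,s_k}$ also satisfies \eqref{BH}, and it is multi-homogeneous of degree $(s_1,\ldots,s_k)$ by the computation preceding Definition \ref{mhp}. Hence Theorem \ref{multi-homo} applies to each $T_{s_1,\ldots,s_k}$, giving
$$
T_{s_1,\ldots,s_k}=q_{s_1,\ldots, s_k}({\bf W}, {\bf W}^*)
$$
with coefficients $A_{(\boldsymbol\alpha,\boldsymbol\beta)}\in B(\cK)$ determined by \eqref{AA}. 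Define $F$ to be the free $k$-pluriharmonic candidate with these coefficients. Two things must be checked: that the associated radial series converges in norm for every $r\in[0,1)$ (so that $F$ is genuinely a free $k$-pluriharmonic function on ${\bf D}_{{\bf n},rad}^{\bf m}$), and that $F$ is bounded with $\text{SOT-}\lim_{r\to1}F(r{\bf W})=T$. For the convergence/boundedness, I would use the noncommutative Berezin transform associated with the poly-hyperball: the key identity is that $F(r{\bf W})$ is the Berezin transform of $T$ at the point $r{\bf W}$, equivalently $F(r{\bf W})={\bf K}_{r{\bf W}}^*(T\otimes I){\bf K}_{r{\bf W}}$ up to the appropriate intertwining from Section 1, which is a contraction-valued expression dominated by $\|T\|$; the partial sums of the $r$-weighted series are Cesàro/Abel-type averages of the $T_{s_1,\ldots,s_k}$, and Proposition \ref{homo-decomp} together with Lemma \ref{lem} gives the SOT convergence back to $T$ as $r\to1$. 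Uniqueness of $F$ follows because the coefficients $A_{(\boldsymbol\alpha,\boldsymbol\beta)}$ are recovered from $T$ by formula \eqref{AA} (equivalently, the multi-homogeneous parts of $T$ are uniquely determined by $T$ via Definition \ref{mhp}).

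\textbf{Step 2: $(ii)\Rightarrow(iii)\Rightarrow(iv)$.} If $(ii)$ holds, then $F(r{\bf W})$ lies in the linear span of the operators $C\otimes{\bf W}_{\boldsymbol\alpha}{\bf W}_{\boldsymbol\beta}^*$ for each fixed $r\in[0,1)$ --- indeed the radial series converges in norm, so $F(r{\bf W})$ is a norm-limit, hence an SOT-limit, of finite such linear combinations --- and then $T=\text{SOT-}\lim_{r\to1}F(r{\bf W})$ lies in the SOT-closed span, which gives $(iii)$. The implication $(iii)\Rightarrow(iv)$ is immediate since the SOT-closure is contained in the WOT-closure of a convex (here: linear) set; in fact for a subspace the two closures coincide, but one direction suffices here.

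\textbf{Step 3: $(iv)\Rightarrow(i)$.} By Lemma \ref{qBH}, each generator $C\otimes{\bf W}_{\boldsymbol\alpha}{\bf W}_{\boldsymbol\beta}^*$ satisfies the Brown--Halmos condition \eqref{BH} (apply the lemma with $\cK$ in place of the scalars and $A_{(\boldsymbol\alpha,\boldsymbol\beta)}=C$, the coefficient of a single monomial). The set of operators satisfying \eqref{BH} is a linear subspace, so every finite linear combination of such generators satisfies \eqref{BH}. Finally, \eqref{BH} is preserved under WOT-limits: for fixed $h,\ell$ the maps $X\mapsto\langle\widetilde{\bf\Lambda}_i^{\prime*}X\widetilde{\bf\Lambda}_i'h,\ell\rangle$ and $X\mapsto\langle\widetilde{\bf\Lambda}_{i,\beta}X\widetilde{\bf\Lambda}_{i,\beta}^*h,\ell\rangle$ are WOT-continuous (the operators $\widetilde{\bf\Lambda}_i'$, $\widetilde{\bf\Lambda}_{i,\beta}$ being fixed and bounded), and the sum over $|\beta|=j$, $j\le m_i-1$, is finite, so WOT-convergence of $T_n\to T$ yields \eqref{BH} for $T$. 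This closes the cycle.

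\textbf{Main obstacle.} The substantive work is entirely in Step 1, specifically in justifying norm-convergence of the radial series and identifying the Abel-type limit $\lim_{r\to1}F(r{\bf W})$ with $T$ in the strong operator topology; this is where the Berezin-transform estimates and the summability-kernel arguments of Proposition \ref{homo-decomp} (together with the Fejér-kernel bookkeeping over the $k$ tori) do the real lifting. The remaining implications are soft, relying only on linearity of the solution set of \eqref{BH}, Lemma \ref{qBH}, and elementary topological-closure facts.
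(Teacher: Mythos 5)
Your proposal follows essentially the same route as the paper's proof: decompose $T$ into multi-homogeneous parts, apply Theorem \ref{BH-cond} and Theorem \ref{multi-homo} to identify each part with $q_{s_1,\ldots,s_k}({\bf W},{\bf W}^*)$, use the noncommutative Berezin kernel to get the uniform bound on the radial sums and Proposition \ref{homo-decomp} with Lemma \ref{lem} to recover $T=\text{\rm SOT-}\lim_{r\to 1}F(r{\bf W})$, and close the cycle via Lemma \ref{qBH} together with WOT-continuity of the Brown--Halmos equations. The only ingredients you leave as assertions, namely the identity $(I_\cK\otimes {\bf K}_{r{\bf W}}^*)(T\otimes I)(I_\cK\otimes {\bf K}_{r{\bf W}})=\sum_{(s_1,\ldots,s_k)\in\ZZ^k}q_{s_1,\ldots,s_k}(r{\bf W},r{\bf W}^*)$ and the bound $\|T_{s_1,\ldots,s_k}\|\le\|T\|$ ensuring norm convergence of the radial series, are exactly the computations the paper carries out explicitly, so your sketch is a faithful outline of the published argument.
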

\begin{proof} We prove the implication (i)$\implies$(ii). Assume that $T$ satisfies the Brown-Halmos condition \eqref{BH} and let
   $\{T_{s_1,\ldots, s_k}\}_{(s_1,\ldots, s_k)\in \ZZ^k}$ be the multi-homogeneous parts of $T$. According to Proposition \ref{homo-decomp}, we have
   $$
Tf=\lim_{N_1\to \infty}\ldots \lim_{N_k\to \infty} \sum_{(s_1,\ldots, s_k)\in \ZZ^k, |s_j|\leq N_j} T_{s_1,\ldots, s_k}f
$$
for every $f\in \cK\otimes\cE_{p_1,\ldots, p_k}$  and every $(p_1,\ldots, p_k)\in \ZZ^k$. On the other hand, Theorem
\ref{BH-cond}  shows that $T_{s_1,\ldots, s_k}$ satisfies the Brown-Halmos condition and, due to Theorem
\ref{multi-homo}, we have
$$ T_{s_1,\ldots, s_k}g=q_{s_1,\ldots, s_k}({\bf W}, {\bf W}^*) g,\qquad g\in \cK\otimes \bigotimes_{s=1}^k  F^2(H_{n_s}),
$$
where
 \begin{equation}
 \label{q}
 q_{s_1,\ldots, s_k}({\bf W}, {\bf W}^*):=
\sum_{{\alpha_i,\beta_i\in \FF_{n_i}^+, i\in \{1,\ldots, k\}}\atop{|\alpha_i|=s_i^+, |\beta_i|=s_i^-}} A_{(\alpha_1,\ldots,\alpha_k,\beta_1,\ldots, \beta_k)}\otimes {\bf W}_{1,\alpha_1}\cdots {\bf W}_{k,\alpha_k}{\bf W}_{1,\beta_1}^*\cdots {\bf W}_{k,\beta_k}^*,
\end{equation}
for some operators $A_{(\alpha_1,\ldots,\alpha_k;\beta_1,\ldots, \beta_k)}\in B(\cK)$.
Denote by $\cP_\cK$ the linear span of all vectors of the form
$h\otimes e^1_{\alpha_1}\otimes \cdots\otimes e^k_{\alpha_k}$, where $h\in \cK$, $\alpha_i\in \FF_{n_i}^+$.
Combining the results above, we deduce that
\begin{equation}
\label {Tpp}
Tp=\lim_{N_1\to \infty}\ldots \lim_{N_k\to \infty} \sum_{(s_1,\ldots, s_k)\in \ZZ^k, |s_j|\leq N_j} q_{s_1,\ldots, s_k}({\bf W}, {\bf W}^*)p,\qquad p\in \cP_\cK.
\end{equation}
We remark that, for every $x,y\in  \cK\otimes \bigotimes_{s=1}^k  F^2(H_{n_s})$, we have
\begin{equation*}
\begin{split}
|\left<T_{s_1,\ldots, s_k}x,y\right>|
&\leq
\left(\frac{1}{2\pi}\right)^k
\int_0^{2\pi}\cdots \int_0^{2\pi}  |\left<T
\left(I_\cK\otimes \Gamma (e^{i {\boldsymbol\theta}})^*\right) x, \left(I_\cK\otimes \Gamma (e^{i {\boldsymbol\theta}})^*\right)y\right>|d\theta_1\ldots d\theta_k\\
&\leq \|T\|\|x\|\|y\|,
\end{split}
\end{equation*}
which implies $\|T_{s_1,\ldots, s_k}\|\leq \|T\|$ for every $(s_1,\ldots, s_k)\in \ZZ^k$. As a consequence, we deduce that the series
\begin{equation*}
\begin{split}
\sum_{(s_1,\ldots, s_k)\in \ZZ^k}q_{s_1,\ldots, s_k}(r{\bf W}, r{\bf W}^*)&=
\sum_{(s_1,\ldots, s_k)\in \ZZ^k}r^{|s_1|+\cdots +|s_k|}q_{s_1,\ldots, s_k}({\bf W}, {\bf W}^*)\\
&=\sum_{(s_1,\ldots, s_k)\in \ZZ^k}r^{|s_1|+\cdots +|s_k|} T_{s_1,\ldots, s_k}
\end{split}
\end{equation*}
are convergent in the operator norm topology.
Now, we prove that
\begin{equation}
\label{conv}
\left\|\sum_{(s_1,\ldots, s_k)\in \ZZ^k}q_{s_1,\ldots, s_k}(r{\bf W}, r{\bf W}^*)p-\sum_{(s_1,\ldots, s_k)\in \ZZ^k}q_{s_1,\ldots, s_k}({\bf W}, {\bf W}^*)p\right\|\to 0,\quad \text{as } \ r\to 1,
\end{equation}
for every $p\in \cP_\cK$. It is enough to prove the relation when $p=f\in \cK\otimes \cE_{p_1,\ldots, p_k}$.
To this end,  due to relation \eqref{Tpp}, if  $\epsilon>0$ , then there is a finite set $\Gamma\subset  \ZZ^k$ such that
$\left\|\sum_{(s_1,\ldots, s_k)\in \ZZ^k\backslash \Gamma}q_{s_1,\ldots, s_k}({\bf W}, {\bf W}^*)f \right\|<\epsilon$.
Since the verctors  $\{q_{s_1,\ldots, s_k}({\bf W}, {\bf W}^*)f \}_{(s_1,\ldots, s_k)\in \ZZ^k}$ are pairwise orthogonal
and
$$
\sum_{(s_1,\ldots, s_k)\in \ZZ^k\backslash \Gamma}\left\|q_{s_1,\ldots, s_k}(r{\bf W}, r{\bf W}^*)f \right\|^2
=\sum_{(s_1,\ldots, s_k)\in \ZZ^k\backslash \Gamma}r^{(|s_1|+\cdots +|s_k|)}\left\|q_{s_1,\ldots, s_k}({\bf W}, {\bf W}^*)f \right\|^2\leq \epsilon^2,
$$
we deduce that
\begin{equation*}
\begin{split}
&\left\|\sum_{(s_1,\ldots, s_k)\in \ZZ^k}q_{s_1,\ldots, s_k}(r{\bf W}, r{\bf W}^*)f-\sum_{(s_1,\ldots, s_k)\in \ZZ^k}q_{s_1,\ldots, s_k}({\bf W}, {\bf W}^*)f\right\|\\
&\qquad\qquad \leq
\sum_{(s_1,\ldots, s_k)\in \Gamma}\left\|q_{s_1,\ldots, s_k}(r{\bf W}, r{\bf W}^*)f-q_{s_1,\ldots, s_k}({\bf W}, {\bf W}^*)f\right\|\\
&\qquad \qquad+
\left(\sum_{(s_1,\ldots, s_k)\in \ZZ^k\backslash \Gamma}\left\|q_{s_1,\ldots, s_k}(r{\bf W}, r{\bf W}^*)f \right\|^2\right)^{1/2}
+\left(\sum_{(s_1,\ldots, s_k)\in \ZZ^k\backslash \Gamma}\left\|q_{s_1,\ldots, s_k}({\bf W}, {\bf W}^*)f \right\|^2\right)^{1/2}\\
&\qquad \qquad \leq
\sum_{(s_1,\ldots, s_k)\in \Gamma}\left\|q_{s_1,\ldots, s_k}(r{\bf W}, r{\bf W}^*)f- q_{s_1,\ldots, s_k}({\bf W}, {\bf W}^*)f\right\| + 2\epsilon.
\end{split}
\end{equation*}
Taking $r\to 1$, one can easily  deduce relation \eqref{conv}.
Now we prove that
\begin{equation}
\label{VN}
\left\|\sum_{(s_1,\ldots, s_k)\in \ZZ^k}q_{s_1,\ldots, s_k}(r{\bf W}, r{\bf W}^*)\right\|\leq \|T\|,\qquad r\in[0,1).
\end{equation}

We recall that  the noncommutative Berezin kernel associated with $r{\bf W}\in {\bf D_n^m}(\otimes _{i=1}^k F^2(H_{n_i}))$ is defined on
$\otimes_{i=1}^kF^2(H_{n_{i}})$ with values in $ \otimes_{i=1}^kF^2(H_{n_{i}})\otimes \cD_{r{\bf W}}\subset \left(\otimes_{i=1}^kF^2(H_{n_{i}})\right)\otimes \left(\otimes_{i=1}^kF^2(H_{n_{i}})\right)$, where $\cD_{r{\bf W}}:=\overline{\boldsymbol{\Delta}^{\bf m}_{r{\bf W}}(I)(\otimes_{i=1}^kF^2(H_{n_i}))}$.
Let $\boldsymbol\gamma=(\gamma_1,\ldots, \gamma_k)$ and $\boldsymbol \omega=(\omega_1,\ldots, \omega_k)$ be  $k$-tuples in ${\bf F}_{\bf n}^+$, set $q:=\max\{|\gamma_1|, \ldots |\gamma_k|, |\omega_1|,\ldots,|\omega_k|\}$, and define the operator
$$
T_q:=\sum_{s_1\in \ZZ, |s_1|\leq q}\cdots\sum_{s_k\in \ZZ, |s_k|\leq q}\sum_{{\alpha_i,\beta_i\in \FF_{n_k}^+, i\in \{1,\ldots, k\}}\atop{|\alpha_i|=s_i^+, |\beta_i|=s_i^-}} A_{(\alpha_1,\ldots, \alpha_k, \beta_1,\ldots, \beta_k)}\otimes {\bf W}_{\boldsymbol\alpha} {\bf W}_{\boldsymbol\beta}^*,
$$
where we use the notation ${\bf W}_{\boldsymbol\alpha}:={\bf W}_{1,\alpha_1}\cdots {\bf W}_{k,\alpha_k}$ if $\boldsymbol\alpha:=(\alpha_1,\ldots, \alpha_k)\in{\bf F_n}^+:= \FF_{n_1}^+\times\cdots \times \FF_{n_k}^+$. We also set $e_{\boldsymbol\alpha}:=e_{\alpha_1}^1\otimes \cdots \otimes e_{\alpha_k}^k$ and $b_{\boldsymbol\alpha}^{({\bf m})}:=\sqrt{b_{1,\alpha_1}^{(m_1)}}\cdots \sqrt{b_{k,\alpha_k}^{(m_k)}}$.

Note that
\begin{equation*}
\begin{split}
&\left<(I_\cK\otimes {\bf K}_{r{\bf W}}^*) (T\otimes I_{\otimes_{i=1}^kF^2(H_{n_{i}})})(I_\cK\otimes {\bf K}_{r{\bf W}})(h\otimes e_{\boldsymbol\gamma}), h'\otimes e_{\boldsymbol\omega}\right>\\
&=\left<(T\otimes I_{\otimes_{i=1}^kF^2(H_{n_{i}})})
\sum_{\boldsymbol\alpha\in {\bf F}_{\bf n}^+}h\otimes b_{\boldsymbol\alpha}^{({\bf m})}e_{\boldsymbol\alpha}\otimes \boldsymbol\Delta_{r{\bf W}}(I)^{1/2}{\bf W}_{\boldsymbol\alpha}^*(e_{\boldsymbol\gamma})\right.,\\
&\qquad\qquad \left.
\sum_{\boldsymbol\beta\in {\bf F}_{\bf n}^+}h'\otimes b_{\boldsymbol\beta}^{({\bf m})}e_{\boldsymbol\beta}\otimes \boldsymbol\Delta_{r{\bf W}}(I)^{1/2}{\bf W}_{\boldsymbol\beta}^*(e_{\boldsymbol\omega})\right>
\\
&=
\sum_{\boldsymbol\alpha\in {\bf F}_{\bf n}^+}
\sum_{\boldsymbol\beta\in {\bf F}_{\bf n}^+}
\left<
T(h\otimes b_{\boldsymbol\alpha}^{({\bf m})}e_{\boldsymbol\alpha})\otimes \boldsymbol\Delta_{r{\bf W}}(I)^{1/2}{\bf W}_{\boldsymbol\alpha}^*(e_{\boldsymbol\gamma}),
h'\otimes b_{\boldsymbol\beta}^{({\bf m})}e_{\boldsymbol\beta}\otimes \boldsymbol\Delta_{r{\bf W}}(I)^{1/2}{\bf W}_{\boldsymbol\beta}^*(e_{\boldsymbol\omega})\right>
\\
&=
\sum_{\boldsymbol\alpha\in {\bf F}_{\bf n}^+}
\sum_{\boldsymbol\beta\in {\bf F}_{\bf n}^+}
\left<
T(h\otimes b_{\boldsymbol\alpha}^{({\bf m})}e_{\boldsymbol\alpha}), h'\otimes b_{\boldsymbol\beta}^{({\bf m})}e_{\boldsymbol\beta}\right>
\left<\boldsymbol\Delta_{r{\bf W}}(I)^{1/2}{\bf W}_{\boldsymbol\alpha}^*(e_{\boldsymbol\gamma}),
 \boldsymbol\Delta_{r{\bf W}}(I)^{1/2}{\bf W}_{\boldsymbol\beta}^*(e_{\boldsymbol\omega})\right> \\
 &=
 \sum_{s_1\in \ZZ, |s_1|\leq q}\cdots\sum_{s_k\in \ZZ, |s_k|\leq q}\sum_{{\alpha_i,\beta_i\in \FF_{n_k}^+, i\in \{1,\ldots, k\}}\atop{|\alpha_i|=s_i^+, |\beta_i|=s_i^-}}
 \left<
T_q(h\otimes b_{\boldsymbol\alpha}^{({\bf m})}e_{\boldsymbol\alpha}), h'\otimes b_{\boldsymbol\beta}^{({\bf m})}e_{\boldsymbol\beta}\right>\\
&\qquad \qquad \times
\left<\boldsymbol\Delta_{r{\bf W}}(I)^{1/2}{\bf W}_{\boldsymbol\alpha}^*(e_{\boldsymbol\gamma}),
 \boldsymbol\Delta_{r{\bf W}}(I)^{1/2}{\bf W}_{\boldsymbol\beta}^*(e_{\boldsymbol\omega})\right>
 \\
&=
 \sum_{\boldsymbol\alpha\in {\bf F}_{\bf n}^+}
\sum_{\boldsymbol\beta\in {\bf F}_{\bf n}^+}
\left<
T_q(h\otimes b_{\boldsymbol\alpha}^{({\bf m})}e_{\boldsymbol\alpha}), h'\otimes b_{\boldsymbol\beta}^{({\bf m})}e_{\boldsymbol\beta}\right>
\left<\boldsymbol\Delta_{r{\bf W}}(I)^{1/2}{\bf W}_{\boldsymbol\alpha}^*(e_{\boldsymbol\gamma}),
 \boldsymbol\Delta_{r{\bf W}}(I)^{1/2}{\bf W}_{\boldsymbol\beta}^*(e_{\boldsymbol\omega})\right> \\
 &=
 \left<(I_{\cK}\otimes {\bf K}_{r{\bf W}}^*)(T_q\otimes I_{\otimes_{i=1}^kF^2(H_{n_{i}})})
(I_{\cE}\otimes {\bf K}_{r{\bf W}})(h\otimes e_{\boldsymbol\gamma}), h'\otimes e_{\boldsymbol\omega}\right>\\
 &=
\sum_{s_1\in \ZZ, |m_1|\leq q}\cdots\sum_{s_k\in \ZZ, |m_k|\leq q}\sum_{{\alpha_i,\beta_i\in \FF_{n_k}^+, i\in \{1,\ldots, k\}}\atop{|\alpha_i|=s_i^+, |\beta_i|=s_i^-}}\\
&\qquad \qquad
\left< \left(A_{(\alpha_1,\ldots, \alpha_k, \beta_1,\ldots, \beta_k)}\otimes r^{\sum_{i=1}^k(|\alpha_i|+|\beta_i|)} {\bf W}_{\boldsymbol\alpha} {\bf W}_{\boldsymbol\beta}^*\right)(h\otimes e_{\gamma}), h'\otimes e_{\boldsymbol\omega}\right>\\
&=\left<\sum_{(s_1,\ldots, s_k)\in \ZZ^k}q_{s_1,\ldots, s_k}(r{\bf W}, r{\bf W}^*)(h\otimes e_{\boldsymbol\gamma}), h'\otimes e_{\boldsymbol\omega}\right>.
\end{split}
\end{equation*}
Hence, we deduce that
\begin{equation}
\label{KK}
(I_\cK\otimes {\bf K}_{r{\bf W}}^*) (T\otimes I_{\otimes_{i=1}^kF^2(H_{n_{i}})})(I_\cK\otimes {\bf K}_{r{\bf W}})
=\sum_{(s_1,\ldots, s_k)\in \ZZ^k}q_{s_1,\ldots, s_k}(r{\bf W}, r{\bf W}^*)
\end{equation}
for every $r\in [0,1)$. Since $ {\bf K}_{r{\bf W}}$ is an isometry, we obtain inequality \eqref{VN}.

Now, we prove that
\begin{equation}\label{SOT}
\text{\rm SOT-}\lim_{r \to 1}\sum_{(s_1,\ldots, s_k)\in \ZZ^k}q_{s_1,\ldots, s_k}(r{\bf W}, r{\bf W}^*)=T.
\end{equation}
Indeed, let  $\epsilon>0$  and $\xi\in \cK\otimes \bigotimes_{s=1}^k  F^2(H_{n_s})$. Then there is $p\in \cP_\cK$ such that $\|\xi-p\|<\epsilon$ and, due to relations \eqref{Tpp} and \eqref{VN}, we deduce that
\begin{equation*}
\begin{split}
&\left\|\sum_{(s_1,\ldots, s_k)\in \ZZ^k}q_{s_1,\ldots, s_k}(r{\bf W}, r{\bf W}^*)\xi-T\xi\right\|
\leq
\left\|\sum_{(s_1,\ldots, s_k)\in \ZZ^k}q_{s_1,\ldots, s_k}(r{\bf W}, r{\bf W}^*)(\xi-p)\right\|\\
&\qquad +\left\|\sum_{(s_1,\ldots, s_k)\in \ZZ^k}q_{s_1,\ldots, s_k}(r{\bf W}, r{\bf W}^*)p-\sum_{(s_1,\ldots, s_k)\in \ZZ^k}q_{s_1,\ldots, s_k}({\bf W}, {\bf W}^*)p\right\|\\
&\qquad \qquad
+
\left\|\sum_{(s_1,\ldots, s_k)\in \ZZ^k}q_{s_1,\ldots, s_k}({\bf W}, {\bf W}^*)p-T\xi\right\|
\\
&\leq 2\|T\|\|\xi-p\| +\left\|\sum_{(s_1,\ldots, s_k)\in \ZZ^k}q_{s_1,\ldots, s_k}(r{\bf W}, r{\bf W}^*)p-\sum_{(s_1,\ldots, s_k)\in \ZZ^k}q_{s_1,\ldots, s_k}({\bf W}, {\bf W}^*)p\right\|.
\end{split}
\end{equation*}
Consequently, using relation \eqref{conv}, one can easily see that relation \eqref{SOT} holds.
Note that $F({\bf X}):=\sum_{(s_1,\ldots, s_k)\in \ZZ^k}q_{s_1,\ldots, s_k}({\bf X}, {\bf X}^*) $ is a free  $k$-pluriharmonic function on the radial part of ${\bf D_n^m}$.
This completes the proof of the implication (i)$\implies$(ii). Now, we prove the implication (ii)$\implies$(i).

Assume that there is a free $k$-pluriharmonic  function $F$ on  the radial poly-hyperball ${\bf D}_{{\bf n},rad}^{\bf m}$ with coefficients in $B(\cK)$ such that
    $$T=\text{\rm SOT-}\lim_{r\to 1} F(r{\bf W}).$$
    Consequently,
    $$
    F(r{\bf W})=\sum_{(s_1,\ldots, s_k)\in \ZZ^k}q_{s_1,\ldots, s_k}(r{\bf W}, r{\bf W}^*),
    $$
    where the convergence of the series is in the operator norm topology and $q_{s_1,\ldots, s_k}(r{\bf W}, r{\bf W})^*$ has the form described by relation \eqref{q}.
Due to Lemma \ref{qBH}, relation
\begin{equation}
\label{BH-X}(\widetilde{\bf \Lambda}_i^*\widetilde{\bf \Lambda}_i)^{-1} \widetilde{\bf \Lambda}_i^*
X\widetilde{\bf \Lambda}_i(\widetilde{\bf \Lambda}_i^*\widetilde{\bf \Lambda}_i)^{-1}
={\bf diag}_{n_i}\left({\bf \Psi_{\widetilde {\bf \Lambda}_i}}(X) \right)
\end{equation}
is satisfied  when $X=q_{s_1,\ldots, s_k}({\bf W}, {\bf W}^*)$. Hence, we deduce that the same relation holds when $X=F(r{\bf W})$. Taking the SOT-limit, as $r\to 1$ in the resulting relation  and using the fact that  $T=\text{\rm SOT-}\lim_{r\to 1} F(r{\bf W})$ we conclude that  that $T$ satisfies the Brown-Halmos condition. Therefore condition (i) holds. Since the the implications (ii)$\implies $(iii) and (iii)$\implies $(i) are obvious, it remains to prove the implication (iv)$\implies $(i).

To this end, assume that condition (iv) holds. Due to Lemma \ref{qBH}, for every  $ C\in B(\cK)$ and $   (\boldsymbol \alpha, \boldsymbol \beta)\in \cJ$,   the operator $ X=C\otimes {\bf W}_{\boldsymbol\alpha}{\bf W}_{\boldsymbol\beta}^*$ satisfies the relation \eqref{BH-X}.
 Taking linear combinations of this type of operators and then WOT-limits, one can easily see that  the relation \eqref{BH-X} holds
when $X=T$.
The proof is complete.
\end{proof}

\begin{corollary} \label{norm} Under the conditions of Theorem \ref{main}, we have
$$
\|T\|=\sup_{r\in [0,1)}\|F(r{\bf W})\|=\lim_{r\to 1}\|F(r{\bf W})\|=\sup_{p\in \cP_\cK, \|p\|\leq 1}\|F({\bf W})p\|.
$$
\end{corollary}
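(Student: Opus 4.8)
The plan is to extract all four quantities directly from the constructions already made in the proof of Theorem \ref{main}, writing throughout $F(r{\bf W})=\sum_{(s_1,\ldots,s_k)\in\ZZ^k}q_{s_1,\ldots,s_k}(r{\bf W},r{\bf W}^*)$, which is a bounded operator for each $r\in[0,1)$ since that series converges in operator norm. First I would invoke inequality \eqref{VN}, which asserts exactly $\|F(r{\bf W})\|\le\|T\|$ for every $r\in[0,1)$ and which was obtained there from the Berezin factorization \eqref{KK} together with the fact that the kernel ${\bf K}_{r{\bf W}}$ is an isometry (because $r{\bf W}$ is pure). This already gives $\sup_{r\in[0,1)}\|F(r{\bf W})\|\le\|T\|$.

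Next I would use the strong convergence \eqref{SOT}, that is $T=\text{\rm SOT-}\lim_{r\to1}F(r{\bf W})$. For any $\xi$ with $\|\xi\|\le1$ one has $\|T\xi\|=\lim_{r\to1}\|F(r{\bf W})\xi\|\le\liminf_{r\to1}\|F(r{\bf W})\|$, and taking the supremum over such $\xi$ gives $\|T\|\le\liminf_{r\to1}\|F(r{\bf W})\|$. Combining the two bounds yields the chain
$$
\|T\|\le\liminf_{r\to1}\|F(r{\bf W})\|\le\limsup_{r\to1}\|F(r{\bf W})\|\le\sup_{r\in[0,1)}\|F(r{\bf W})\|\le\|T\|,
$$
which forces equality everywhere; in particular $\lim_{r\to1}\|F(r{\bf W})\|$ exists and equals $\|T\|=\sup_{r\in[0,1)}\|F(r{\bf W})\|$.

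For the remaining equality I would note that, for $p\in\cP_\cK$, the vector $F({\bf W})p$ is by definition $\sum_{(s_1,\ldots,s_k)}q_{s_1,\ldots,s_k}({\bf W},{\bf W}^*)p$, and by relation \eqref{Tpp}—equivalently, by the second part of Proposition \ref{homo-decomp} applied on a spectral subspace $\cE_{p_1,\ldots,p_k}$ together with Theorem \ref{multi-homo}, using that the terms $q_{s_1,\ldots,s_k}({\bf W},{\bf W}^*)p=T_{s_1,\ldots,s_k}p$ are pairwise orthogonal and square-summable—this sum converges in norm to $Tp$. Hence $\sup_{p\in\cP_\cK,\,\|p\|\le1}\|F({\bf W})p\|=\sup_{p\in\cP_\cK,\,\|p\|\le1}\|Tp\|$, and since $\cP_\cK$ is norm dense in $\cK\otimes\bigotimes_{s=1}^kF^2(H_{n_s})$ this equals $\|T\|$. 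I do not anticipate a substantial obstacle here: every analytic input has already been established inside the proof of Theorem \ref{main}, and the only routine facts used are the lower semicontinuity of the operator norm under strong convergence and the density of $\cP_\cK$.
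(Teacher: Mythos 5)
Your argument is correct, and two of the three equalities are obtained exactly as in the paper: the bound $\sup_{r\in[0,1)}\|F(r{\bf W})\|\le\|T\|$ comes from inequality \eqref{VN} (Berezin factorization plus the isometry ${\bf K}_{r{\bf W}}$), and the equality $\|T\|=\sup_{p\in \cP_\cK,\ \|p\|\le 1}\|F({\bf W})p\|$ comes from $Tp=F({\bf W})p$ on $\cP_\cK$ together with density. Where you diverge is the middle equality $\sup_{r}\|F(r{\bf W})\|=\lim_{r\to1}\|F(r{\bf W})\|$: you deduce the existence of the limit from lower semicontinuity of the operator norm under the strong convergence $T=\text{\rm SOT-}\lim_{r\to1}F(r{\bf W})$, squeezing $\|T\|\le\liminf\le\limsup\le\sup\le\|T\|$, whereas the paper proves that $r\mapsto\|F(r{\bf W})\|$ is nondecreasing by writing, for $r_1<r_2$,
$$
F(r_1{\bf W})=\bigl(I_\cK\otimes {\bf K}_{{\frac{r_1}{r_2}}{\bf W}}^*\bigr) \bigl(F(r_2{\bf W})\otimes I\bigr)\bigl(I_\cK\otimes {\bf K}_{{\frac{r_1}{r_2}}{\bf W}}\bigr),
$$
so that the supremum is automatically a limit. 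Your route is slightly more elementary (it reuses only facts already isolated in the proof of Theorem \ref{main} and a standard semicontinuity observation), while the paper's route yields the extra, and occasionally useful, information that the radial norm function is monotone; both are complete proofs of the stated chain of equalities.
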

\begin{proof}
According to the proof of Theorem \ref{main}, we have
$\sup_{r\in [0,1)}\|F(r{\bf W})\|\leq\|T\|$ and also  $T=\text{\rm SOT}-\lim_{r\to 1} F(r{\bf W})$. Hence, we deduce  that
$\|T\|=\sup_{r\in [0,1)}\|F(r{\bf W})\|$. If $r_1,r_2\in [0,1)$ with $r_1<r_2$, then
$$
F(r_1{\bf W})=(I_\cK\otimes {\bf K}_{{\frac{r_1}{r_2}}{\bf W}}^*) \left(F(r_2{\bf W})\otimes I_{\otimes_{i=1}^kF^2(H_{n_{i}})}\right)(I_\cK\otimes {\bf K}_{{\frac{r_1}{r_2}}{\bf W}}).
$$
Hence, we deduce that $\|F(r_1{\bf W})\| \leq \|F(r_2{\bf W})\|$, which implies
$\sup_{r\in [0,1)}\|F(r{\bf W})\|=\lim_{r\to 1}\|F(r{\bf W})\|$.
On the other hand, since $Tp=F({\bf W})p$ for every $p\in \cP_\cK$, it is clear  that $\|T\|=\sup_{p\in \cP_\cK, \|p\|\leq 1}\|F({\bf W})p\|$.
This completes the proof.
\end{proof}

\bigskip

\section{Brown-Halmos type characterization of weighted multi-Toeplitz operators}

In this section, we introduce the weighted multi-Toeplitz operators which are associated with the poly-hyperball
${\bf D_{n}^m}$ and   show that  they are precisely those satisfying the Brown-Halmos equations.
We also prove that each  weighted multi-Toeplitz operator   has a unique  formal Fourier representation
which can be used to recover the operator. Conversely, given a formal series, we provide necessary and sufficient conditions on it to be  the formal Fourier representation  of a weighted multi-Toeplitz operator.

If $\omega, \gamma\in \FF_n^+$,
we say that $\omega
\geq_{r}\gamma$ if there is $\sigma\in
\FF_n^+ $ such that $\omega=\sigma \gamma$. In this
case  we set $\omega\backslash_r \gamma:=\sigma$. If $\sigma\neq g_0$ we write $\omega>_r \gamma$. We say that $\omega$ and $\gamma$ are {\it comparable} if either $\omega
\geq_{r}\gamma$ or $\gamma>_r\omega$.
Let $\boldsymbol\omega=(\omega_1,\ldots, \omega_k)$ and $\boldsymbol\gamma=(\gamma_1,\ldots, \gamma_k)$ be in ${\bf F}_{\bf n }^+:=\FF_{n_1}^+\times\cdots \times \FF_{n_k}^+$. We say that $\boldsymbol\omega$ and $\boldsymbol\gamma$ are comparable, and write  $\boldsymbol\omega\sim_c \boldsymbol\gamma$. if, for each $i\in \{1,\ldots, k\}$,  either one of the relations  $\omega_i<_r \gamma_i$, $\gamma_i<_r \omega_i$, or $\omega_i=\gamma_i$ holds.

We denote by $\cC$ the set of all pairs $(\boldsymbol \sigma, \boldsymbol \beta)\in {\bf F}_{\bf n}^+\times {\bf F}_{\bf n}^+$ which are comparable, and  note that
 $\cJ$   is the subset  of $\cC$ of all pairs $(\boldsymbol \sigma, \boldsymbol \beta):=(\sigma_1,\ldots,\sigma_k,\beta_1,\ldots, \beta_k)$,  where   $\sigma_i,\beta_i\in \FF_{n_i}^+$ with $|\sigma_i|=s_i^+$, $ |\beta_i|=s_i^-$, and $s_i\in\ZZ$.
We introduce the {\it simplification function}  ${\bf s}:\cC\to \cJ$ defined by
${\bf s}(\boldsymbol \omega, \boldsymbol\gamma):= (\boldsymbol \sigma,\boldsymbol \beta)$, where,   if  $\boldsymbol\omega=(\omega_1,\ldots, \omega_k)$ and $\boldsymbol\gamma=(\gamma_1,\ldots, \gamma_k)$, then
for every $i\in \{1,\ldots, k\}$,
$$
\sigma_i:=
\begin{cases} \omega_i\backslash_r\gamma_i, & \text{ if } \omega_i\geq_r \gamma_i,\\
g_0^i, &\text{otherwise,}
\end{cases}
  \quad
\beta_i:=\begin{cases} \gamma_i\backslash_r\omega_i, & \text{ if } \gamma_i\geq_r \omega_i,\\
g_0^i, &\text{otherwise}.
\end{cases}
$$

Brown and Halmos \cite{BH} proved that a necessary and sufficient condition that an operator on the Hardy space $H^2(\TT)$ be a Toeplitz operator is that its matrix
 $[\lambda_{ij}]$ with respect to the standard basis $\chi_k(e^{i\theta})=e^{ik\theta}$, $k\in \{0,1,\ldots\}$, be a Toeplitz  matrix, i.e
 $$
 \lambda_{i+1,j+1}=\lambda_{ij},\qquad  i,j\in \{0,1,\ldots\},
 $$
 which is equivalent to   the fact that
 $\lambda_{ij}=a_{i-j}$, where $\varphi =\sum_{k\in \ZZ}a_k \chi_k$ is the Fourier expansion of the symbol $\varphi\in L^\infty(\TT)$.  In what follows, we find an  extension of their result to our noncommutative  multivariable setting.

\begin{definition} \label{MT}  An operator $T\in B(\cK\otimes \bigotimes_{s=1}^k  F^2(H_{n_s}))$ is called  weighted (right) multi-Toeplitz if the exist operators $\{A_{(\boldsymbol \sigma; \boldsymbol \beta)}\}_{(\boldsymbol \sigma; \boldsymbol \beta)\in \cJ}\subset B(\cK)$ such that,
for every $ \boldsymbol\omega, \boldsymbol\gamma \in {\bf F}_{\bf n}^+$ and $x,y\in \cK$,
$$
\left<T(x\otimes e_{\boldsymbol\gamma }), y\otimes e_{\boldsymbol\omega} \right>
=\begin{cases}
\tau_{(\boldsymbol\omega,\boldsymbol\gamma)}\left<A_{{\bf s}(\boldsymbol \omega, \boldsymbol\gamma)}x,y\right>, & \text{ if  } (\boldsymbol \omega, \boldsymbol\gamma)\in \cC,\\
0,&  \text{ if  } (\boldsymbol \omega, \boldsymbol\gamma)\in ({\bf F}_{\bf n }^+\times {\bf F}_{\bf n }^+)\backslash \cC,

\end{cases}
$$
where the weights $ \{\tau_{(\boldsymbol\omega,\boldsymbol\gamma)}\}_{(\boldsymbol \omega, \boldsymbol\gamma)\in \cC}$ are given by
$$ \tau_{(\boldsymbol\omega,\boldsymbol\gamma)}:=
 \prod_{i=1}^k \sqrt{\frac{b^{(m_i)}_{i,\min\{\omega_i, \gamma_i\}}}{b^{(m_i)}_{i,\max\{\omega_i, \gamma_i\}}}}
 $$
 and the coefficients $b_{i,\beta_i}^{(m_i)}$ by  relation \eqref{b-al2}.
  \end{definition}

  \begin{proposition}\label{Toep-def2}
  An operator $T\in B(\cK\otimes \bigotimes_{s=1}^k  F^2(H_{n_s}))$ is   weighted multi-Toeplitz  if and only if,
  for every $ \boldsymbol\omega, \boldsymbol\gamma \in  {\bf F}_{n}^+$,
\begin{equation}
\label{MT2}
\left<T(x\otimes e_{\boldsymbol\gamma }), y\otimes e_{\boldsymbol\omega} \right>
=\begin{cases}
\frac{\tau_{(\boldsymbol\omega,\boldsymbol\gamma)}}{\tau_{(\boldsymbol \omega', \boldsymbol\gamma')}}\left<T(x\otimes e_{\boldsymbol\gamma'}), y\otimes e_{\boldsymbol\omega'} \right>, & \text{ if  } (\boldsymbol \omega, \boldsymbol\gamma)\in \cC,\\
0,&  \text{ if  } (\boldsymbol \omega, \boldsymbol\gamma)\in ({\bf F}_{\bf n }^+\times {\bf F}_{\bf n }^+)\backslash \cC.
\end{cases}
\end{equation}
where  $(\boldsymbol \omega', \boldsymbol\gamma'):={\bf s}(\boldsymbol \omega, \boldsymbol\gamma)$  when $(\boldsymbol \omega, \boldsymbol\gamma)\in \cC$
  \end{proposition}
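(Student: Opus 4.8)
The plan is to prove the two implications of the equivalence separately, the whole argument resting on one structural observation about the simplification function: ${\bf s}$ maps $\cC$ into $\cJ$, and it restricts to the identity on $\cJ$ (so that, in particular, ${\bf s}\circ{\bf s}={\bf s}$). Both facts follow at once from the definitions of $\geq_{r}$, $\backslash_r$ and ${\bf s}$: if $(\boldsymbol\sigma,\boldsymbol\beta)\in\cJ$, then for each $i$ one of $\sigma_i,\beta_i$ equals $g_0^i$, hence $\min\{\sigma_i,\gamma_i\}$ is that empty word and the other cannot be further cancelled, so ${\bf s}(\boldsymbol\sigma,\boldsymbol\beta)=(\boldsymbol\sigma,\boldsymbol\beta)$; and a glance at lengths shows ${\bf s}(\boldsymbol\omega,\boldsymbol\gamma)\in\cJ$ whenever $(\boldsymbol\omega,\boldsymbol\gamma)\in\cC$. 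I also record that each $b_{i,\beta_i}^{(m_i)}$ is a positive integer, so every weight $\tau_{(\boldsymbol\omega,\boldsymbol\gamma)}$, $(\boldsymbol\omega,\boldsymbol\gamma)\in\cC$, is strictly positive and the quotients appearing in \eqref{MT2} are well defined.

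For the implication ``$T$ weighted multi-Toeplitz $\Rightarrow$ \eqref{MT2}'', take coefficients $\{A_{(\boldsymbol\sigma,\boldsymbol\beta)}\}_{(\boldsymbol\sigma,\boldsymbol\beta)\in\cJ}$ as in Definition \ref{MT}. If $(\boldsymbol\omega,\boldsymbol\gamma)\notin\cC$ both sides of \eqref{MT2} vanish. If $(\boldsymbol\omega,\boldsymbol\gamma)\in\cC$, put $(\boldsymbol\omega',\boldsymbol\gamma'):={\bf s}(\boldsymbol\omega,\boldsymbol\gamma)\in\cJ$ and apply Definition \ref{MT} both to $(\boldsymbol\omega,\boldsymbol\gamma)$ and to $(\boldsymbol\omega',\boldsymbol\gamma')$, using ${\bf s}(\boldsymbol\omega',\boldsymbol\gamma')=(\boldsymbol\omega',\boldsymbol\gamma')={\bf s}(\boldsymbol\omega,\boldsymbol\gamma)$ in the second case:
$$
\left<T(x\otimes e_{\boldsymbol\gamma}),y\otimes e_{\boldsymbol\omega}\right>=\tau_{(\boldsymbol\omega,\boldsymbol\gamma)}\left<A_{{\bf s}(\boldsymbol\omega,\boldsymbol\gamma)}x,y\right>,\qquad \left<T(x\otimes e_{\boldsymbol\gamma'}),y\otimes e_{\boldsymbol\omega'}\right>=\tau_{(\boldsymbol\omega',\boldsymbol\gamma')}\left<A_{{\bf s}(\boldsymbol\omega,\boldsymbol\gamma)}x,y\right>.
$$
Since $\tau_{(\boldsymbol\omega',\boldsymbol\gamma')}>0$, eliminating $\left<A_{{\bf s}(\boldsymbol\omega,\boldsymbol\gamma)}x,y\right>$ between these two identities gives exactly the first line of \eqref{MT2}, and the second line is immediate.

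For the converse, assume \eqref{MT2}. For each $(\boldsymbol\sigma,\boldsymbol\beta)\in\cJ$ the map $(x,y)\mapsto\left<T(x\otimes e_{\boldsymbol\beta}),y\otimes e_{\boldsymbol\sigma}\right>$ is a bounded sesquilinear form on $\cK$ of norm at most $\|T\|$ (the vectors $e_{\boldsymbol\beta}$, $e_{\boldsymbol\sigma}$ are unit vectors), hence is represented by a unique $B_{(\boldsymbol\sigma,\boldsymbol\beta)}\in B(\cK)$; set $A_{(\boldsymbol\sigma,\boldsymbol\beta)}:=\tau_{(\boldsymbol\sigma,\boldsymbol\beta)}^{-1}B_{(\boldsymbol\sigma,\boldsymbol\beta)}\in B(\cK)$, which is legitimate since $\tau_{(\boldsymbol\sigma,\boldsymbol\beta)}>0$. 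I then check that these coefficients exhibit $T$ as in Definition \ref{MT}. The case $(\boldsymbol\omega,\boldsymbol\gamma)\notin\cC$ is the second line of \eqref{MT2}. For $(\boldsymbol\omega,\boldsymbol\gamma)\in\cC$, writing $(\boldsymbol\omega',\boldsymbol\gamma'):={\bf s}(\boldsymbol\omega,\boldsymbol\gamma)\in\cJ$ and using first \eqref{MT2} and then the definitions of $B_{(\boldsymbol\omega',\boldsymbol\gamma')}$ and $A_{(\boldsymbol\omega',\boldsymbol\gamma')}$,
$$
\left<T(x\otimes e_{\boldsymbol\gamma}),y\otimes e_{\boldsymbol\omega}\right>=\frac{\tau_{(\boldsymbol\omega,\boldsymbol\gamma)}}{\tau_{(\boldsymbol\omega',\boldsymbol\gamma')}}\left<T(x\otimes e_{\boldsymbol\gamma'}),y\otimes e_{\boldsymbol\omega'}\right>=\frac{\tau_{(\boldsymbol\omega,\boldsymbol\gamma)}}{\tau_{(\boldsymbol\omega',\boldsymbol\gamma')}}\,\tau_{(\boldsymbol\omega',\boldsymbol\gamma')}\left<A_{(\boldsymbol\omega',\boldsymbol\gamma')}x,y\right>=\tau_{(\boldsymbol\omega,\boldsymbol\gamma)}\left<A_{{\bf s}(\boldsymbol\omega,\boldsymbol\gamma)}x,y\right>,
$$
which is the first line of Definition \ref{MT}; hence $T$ is weighted multi-Toeplitz. (One also sees in passing that the coefficients are uniquely determined by $T$, though this is not part of the statement.)

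The only point requiring genuine care — and the one I would single out as the main, if modest, obstacle — is the combinatorial bookkeeping for ${\bf s}$: that ${\bf s}$ takes values in $\cJ$, fixes $\cJ$ pointwise, and that these chosen representatives are compatible with the weight normalization, so that $\tau_{(\boldsymbol\omega',\boldsymbol\gamma')}$ in the displays above is precisely the weight attached to ${\bf s}(\boldsymbol\omega,\boldsymbol\gamma)\in\cC$ and is nonzero. This is a routine unwinding of the definitions of $\geq_{r}$, $\backslash_r$, ${\bf s}$ and of the weights $\tau_{(\boldsymbol\omega,\boldsymbol\gamma)}$, with no new idea involved.
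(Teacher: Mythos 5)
Your proof is correct and follows essentially the same route as the paper: both directions hinge on the observation that ${\bf s}$ maps $\cC$ into $\cJ$ and fixes $\cJ$ pointwise, with the coefficients $A_{{\bf s}(\boldsymbol\omega,\boldsymbol\gamma)}$ recovered (or defined, in the converse) from $T$ via $\tau_{(\boldsymbol\omega',\boldsymbol\gamma')}^{-1}\left<T(x\otimes e_{\boldsymbol\gamma'}),y\otimes e_{\boldsymbol\omega'}\right>$ exactly as in the paper's argument. Your explicit verification of the combinatorial facts about ${\bf s}$ and the positivity of the weights merely spells out what the paper leaves implicit.
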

  \begin{proof}
  Assume that $T$ is a weighted multi-Toeplitz operator. Note that if
  $(\boldsymbol \omega', \boldsymbol\gamma')\in \cJ$, then ${\bf s}(\boldsymbol \omega', \boldsymbol\gamma')=(\boldsymbol \omega', \boldsymbol\gamma')$ and
  $\tau_{(\boldsymbol \omega', \boldsymbol\gamma')}=\prod_{i=1}^k \frac{1}{\sqrt{b_{i,\max\{\boldsymbol \omega_i', \boldsymbol\gamma_i'\}}}}$.
  Consequently, Definition \ref{MT} implies
  $$
  \left<T(x\otimes e_{\boldsymbol\gamma' }), y\otimes e_{\boldsymbol\omega'} \right>
= \tau_{(\boldsymbol\omega',\boldsymbol\gamma')}\left<A_{(\boldsymbol \omega', \boldsymbol\gamma')}x,y\right>,\qquad x,y\in \cK.
$$
 Let $ (\boldsymbol\omega, \boldsymbol\gamma) \in \cC$ and set  $(\boldsymbol \omega', \boldsymbol\gamma'):={\bf s}(\boldsymbol \omega, \boldsymbol\gamma)$. The relation above implies
 $$
 \left<A_{{\bf s}(\boldsymbol \omega, \boldsymbol\gamma)}x,y\right>=\frac{1}{\tau_{(\boldsymbol \omega', \boldsymbol\gamma')}}\left<T(x\otimes e_{\boldsymbol\gamma' }), y\otimes e_{\boldsymbol\omega'} \right>.
 $$
 Combining this relation with the one in Definition  \ref{MT}, we deduce  relation  \eqref{MT2}.
 Conversely, assume that relation \eqref{MT2} holds.
 For everyfor every  $(\boldsymbol \omega', \boldsymbol\gamma')\in \cJ$, we define the operator
 $A_{(\boldsymbol \omega', \boldsymbol\gamma')}\in B(\cK)$ by setting
 $$
 \left< A_{(\boldsymbol \omega', \boldsymbol\gamma')}x,y\right>:=\frac{1}{\tau_{(\boldsymbol \omega', \boldsymbol\gamma')}}\left<T(x\otimes e_{\boldsymbol\gamma' }), y\otimes e_{\boldsymbol\omega'} \right>,\qquad x,y\in \cK.
 $$
 Consequently,  since ${\bf s}(\boldsymbol \omega, \boldsymbol\gamma)\in \cJ$  when $(\boldsymbol \omega, \boldsymbol\gamma)\in \cC$, we can use  the latter  relation when $(\boldsymbol \omega', \boldsymbol\gamma'):={\bf s}(\boldsymbol \omega, \boldsymbol\gamma)$ and  relation
  \eqref{MT2}, to deduce that $T$ is a weighted multi-Toeplitz operator.
  The proof is complete.
  \end{proof}

  We remark that when $k=1, n_1=1, m_1=1$, and $\cE=\CC$ we recover   the classical Toeplitz operators on the
Hardy space $H^2(\DD)$. Also if $k=1$,  $n_1\geq 2$, and $m_1=1$, we obtain the unweighted  multi-Toeplitz operators on the full Fock space $F^2(H_{n_1})$ (see \cite{Po-multi}, \cite{Po-analytic} and \cite{Po-pluriharmonic}). On the other  hand, if $k\geq 2$, $n_i=m_i=1$ for $i\in \{1,\ldots, k\}$, then $T$ is a Toeplitz operator on the Hardy space $H^2(\DD^k)$.

\begin{theorem}\label{compact} If $T\in B(\otimes_{s=1}^k  F^2(H_{n_s}))$ is a compact weighted multi-Toeplitz operator, then $T=0$.
\end{theorem}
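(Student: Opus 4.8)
The plan is to argue by contradiction, exploiting the fact that a compact operator sends weakly null sequences to norm-null ones. Suppose $T\neq 0$. Since $\{e_{\boldsymbol\gamma}:\boldsymbol\gamma\in{\bf F}_{\bf n}^+\}$ is an orthonormal basis of $\otimes_{s=1}^k F^2(H_{n_s})$, there are $\boldsymbol\omega,\boldsymbol\gamma\in{\bf F}_{\bf n}^+$ with $\langle Te_{\boldsymbol\gamma},e_{\boldsymbol\omega}\rangle\neq 0$; by Definition \ref{MT} this forces $(\boldsymbol\omega,\boldsymbol\gamma)\in\cC$, and since $\tau_{(\boldsymbol\omega,\boldsymbol\gamma)}>0$ the scalar $a:=A_{{\bf s}(\boldsymbol\omega,\boldsymbol\gamma)}$ is nonzero. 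Put $(\boldsymbol\sigma,\boldsymbol\beta):={\bf s}(\boldsymbol\omega,\boldsymbol\gamma)\in\cJ$; I recall that then, for every $i\in\{1,\ldots,k\}$, at least one of $\sigma_i,\beta_i$ is the neutral word $g_0^i$.

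The main step is a \emph{pumping} construction. For $N\in\NN$ set $\boldsymbol\mu^{(N)}:=\bigl((g_1^1)^N,\ldots,(g_1^k)^N\bigr)\in{\bf F}_{\bf n}^+$ and consider the pair $(\boldsymbol\sigma\boldsymbol\mu^{(N)},\boldsymbol\beta\boldsymbol\mu^{(N)})$ obtained by appending, componentwise, these long common right factors. Because one of $\sigma_i,\beta_i$ is trivial for each $i$, the words $\sigma_i\mu_i^{(N)}$ and $\beta_i\mu_i^{(N)}$ are comparable with common right factor $\mu_i^{(N)}$, so $(\boldsymbol\sigma\boldsymbol\mu^{(N)},\boldsymbol\beta\boldsymbol\mu^{(N)})\in\cC$ and ${\bf s}(\boldsymbol\sigma\boldsymbol\mu^{(N)},\boldsymbol\beta\boldsymbol\mu^{(N)})=(\boldsymbol\sigma,\boldsymbol\beta)$. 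Definition \ref{MT} then gives
$$
\langle Te_{\boldsymbol\beta\boldsymbol\mu^{(N)}},\,e_{\boldsymbol\sigma\boldsymbol\mu^{(N)}}\rangle=\tau_{(\boldsymbol\sigma\boldsymbol\mu^{(N)},\boldsymbol\beta\boldsymbol\mu^{(N)})}\,a .
$$
Using the explicit formula for the weights together with \eqref{b-al2}, the $i$-th factor of $\tau_{(\boldsymbol\sigma\boldsymbol\mu^{(N)},\boldsymbol\beta\boldsymbol\mu^{(N)})}$ equals $\bigl(\binom{N+m_i-1}{m_i-1}/\binom{N+\ell_i+m_i-1}{m_i-1}\bigr)^{1/2}$ with $\ell_i:=\max\{|\sigma_i|,|\beta_i|\}$ fixed, and this tends to $1$ as $N\to\infty$ since the two binomial coefficients are polynomials in $N$ of the same degree $m_i-1$ with the same leading coefficient. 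Hence $\tau_{(\boldsymbol\sigma\boldsymbol\mu^{(N)},\boldsymbol\beta\boldsymbol\mu^{(N)})}\to 1$ and $|\langle Te_{\boldsymbol\beta\boldsymbol\mu^{(N)}},e_{\boldsymbol\sigma\boldsymbol\mu^{(N)}}\rangle|\to|a|>0$.

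To finish I would invoke compactness. The vectors $e_{\boldsymbol\beta\boldsymbol\mu^{(N)}}$, $N\geq 1$, are pairwise orthogonal unit vectors (their $i$-th tensor slots have pairwise distinct lengths $|\beta_i|+N$), hence form a weakly null sequence; compactness of $T$ yields $\|Te_{\boldsymbol\beta\boldsymbol\mu^{(N)}}\|\to 0$, so $|\langle Te_{\boldsymbol\beta\boldsymbol\mu^{(N)}},e_{\boldsymbol\sigma\boldsymbol\mu^{(N)}}\rangle|\leq\|Te_{\boldsymbol\beta\boldsymbol\mu^{(N)}}\|\to 0$, contradicting $|a|>0$. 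Therefore $T=0$. The only point requiring genuine care is the weight asymptotics of the preceding paragraph — namely that appending the common factors $\boldsymbol\mu^{(N)}$ does not asymptotically suppress $\tau$ — which reduces to the elementary limit $\binom{N+m_i-1}{m_i-1}/\binom{N+\ell_i+m_i-1}{m_i-1}\to 1$; checking that $(\boldsymbol\sigma\boldsymbol\mu^{(N)},\boldsymbol\beta\boldsymbol\mu^{(N)})$ stays in $\cC$ and simplifies back to $(\boldsymbol\sigma,\boldsymbol\beta)$ is routine bookkeeping with the definitions of $\geq_r$ and ${\bf s}$.
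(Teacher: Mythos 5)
Your proof is correct and is essentially the paper's argument: both exploit that $e_{\boldsymbol\gamma}\to 0$ weakly along comparable pairs with fixed simplification while the weights $\tau_{(\boldsymbol\omega,\boldsymbol\gamma)}$ stay bounded away from zero (the same binomial-ratio limit), so compactness forces every coefficient $A_{{\bf s}(\boldsymbol\omega,\boldsymbol\gamma)}$ to vanish. The only difference is presentational — you run it by contradiction along the explicit pumped sequence $(\boldsymbol\sigma\boldsymbol\mu^{(N)},\boldsymbol\beta\boldsymbol\mu^{(N)})$, whereas the paper takes the limit $|\min\{\omega_i,\gamma_i\}|\to\infty$ over all such pairs — which is immaterial.
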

\begin{proof} Fix an arbitrary  pair $(\boldsymbol \omega', \boldsymbol\gamma')\in \cJ$ and let
$(\boldsymbol \omega, \boldsymbol\gamma)\in \cC$ be such that
${\bf s}(\boldsymbol \omega, \boldsymbol\gamma)=(\boldsymbol \omega', \boldsymbol\gamma')$.
According to Proposition \ref{Toep-def2},  we have
\begin{equation}\label{T}
\left<Te_{\boldsymbol\gamma },  e_{\boldsymbol\omega} \right>=
\frac{\tau_{(\boldsymbol\omega,\boldsymbol\gamma)}}{\tau_{(\boldsymbol \omega', \boldsymbol\gamma')}}\left<Te_{\boldsymbol\gamma'}, e_{\boldsymbol\omega'} \right>.
\end{equation}
Note that, for every $i\in \{1,\ldots, k\}$, $j\in \{1,\ldots, n_i\}$, if $\alpha\in \FF_{n_i}^+$, then
\begin{equation}
\label{bfrac}
\frac{b_{i,g^j_i\alpha}^{(m_i)}}{b_{i,\alpha}}=
\frac{\left(\begin{matrix}  |\alpha|+m_i\\m_i-1
\end{matrix}\right)}{\left(\begin{matrix}  |\alpha|+m_i-1\\m_i-1
\end{matrix}\right)}\to 1,\qquad  \text{as }\ |\alpha|\to \infty.
\end{equation}
Consequently,  for every $\sigma\in \FF_{n_i}^+$, we also have  $\frac{b_{i, \sigma\alpha}^{(m_i)}}{b_{i,\alpha}}\to 1$  as $\ |\alpha|\to \infty$.
On the other hand, we have
$$
\frac{\tau_{(\boldsymbol\omega,\boldsymbol\gamma)}}{\tau_{(\boldsymbol \omega', \boldsymbol\gamma')}}
= \left(\prod_{i=1}^k \sqrt{\frac{b^{(m_i)}_{i,\min\{\omega_i, \gamma_i\}}}{b^{(m_i)}_{i,\max\{\omega_i, \gamma_i\}}}}\right)
\left(\prod_{i=1}^k {\sqrt{b^{(m_i)}_{i,\max\{\boldsymbol \omega_i', \boldsymbol\gamma_i'\}}}}\right).
$$
Due to relation \eqref{bfrac}, we deduce that
$$
\lim_{|\min \{\omega_i, \gamma_i\}|\to \infty} \frac{b^{(m_i)}_{i,\min\{\omega_i, \gamma_i\}}}{b^{(m_i)}_{i,\max\{\omega_i, \gamma_i\}}}=1,
$$
which implies
\begin{equation}
\label{TT}
\lim_{|\min \{\omega_i, \gamma_i\}|\to \infty} \frac{\tau_{(\boldsymbol\omega,\boldsymbol\gamma)}}{\tau_{(\boldsymbol \omega', \boldsymbol\gamma')}}=\prod_{i=1}^k {\sqrt{b^{(m_i)}_{i,\max\{\boldsymbol \omega_i', \boldsymbol\gamma_i'\}}}}\neq 0.
\end{equation}
Now, note that $e_{\boldsymbol\gamma}\to 0$ weakly as $ |\min \{\omega_i, \gamma_i\}|\to \infty$ for each $i\in \{1,\ldots, k\}$.  If $T$ is compact operator, then $Te_{\boldsymbol\gamma}\to 0$ in norm. Using relations
\eqref{T}  and \eqref{TT}, we deduce that
$\left<Te_{\boldsymbol\gamma'}, e_{\boldsymbol\omega'} \right>=0$. Now, using again relation \eqref{T}, we deduce that $\left<Te_{\boldsymbol\gamma },  e_{\boldsymbol\omega} \right>=0$
for every $(\boldsymbol \omega, \boldsymbol\gamma)\in \cC$ such that
${\bf s}(\boldsymbol \omega, \boldsymbol\gamma)=(\boldsymbol \omega', \boldsymbol\gamma')$.
Taking into account  Proposition \ref{Toep-def2}, we conclude that $T=0$.
The proof is complete.
\end{proof}

 \begin{lemma}
 \label{monom1}
  Let $ (\boldsymbol \alpha,\boldsymbol \beta)\in \cJ$ and
  $$
  q_ {(\boldsymbol \alpha, \boldsymbol \beta)}({\bf W}, {\bf W}^*):={\bf W}_{1,\alpha_1}\cdots {\bf W}_{k,\alpha_k}{\bf W}_{1,\beta_1}^*\cdots {\bf W}_{k,\beta_k}^*.
  $$
  The following statements hold.
  \begin{enumerate}
  \item[(i)]
  If $\boldsymbol\gamma \in {\bf F}_{\bf n }^+$, then the family $\{q_ {(\boldsymbol \alpha, \boldsymbol \beta)}({\bf W}, {\bf W}^*)e_{\boldsymbol \gamma}\}_{(\boldsymbol \alpha; \boldsymbol \beta)\in \cJ}$ consists of pairwise orthogonal vectors.
  \item[(ii)] If  $\boldsymbol\omega, \boldsymbol\gamma \in {\bf F}_{\bf n }^+$, then
  $$
  \left<q_ {(\boldsymbol \alpha, \boldsymbol \beta)}({\bf W}, {\bf W}^*)e_{\boldsymbol \gamma}, e_{\boldsymbol\omega}\right>\neq 0
  $$
  if and only if  $(\boldsymbol \omega, \boldsymbol\gamma)\in \cC$ and
  ${\bf s}(\boldsymbol \omega, \boldsymbol\gamma)=(\boldsymbol\alpha,\boldsymbol\beta)$.
  \end{enumerate}
  \end{lemma}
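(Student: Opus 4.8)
The plan is to reduce both statements to a factorwise computation on the tensor product, using only the explicit action of the weighted left creation operators on the orthonormal basis $\{e^i_\alpha\}$ recalled in Section~1. Since ${\bf W}_{i,j}$ acts nontrivially only on the $i$-th tensor factor of $\otimes_{s=1}^k F^2(H_{n_s})$, and operators attached to distinct factors commute, one has $q_{(\boldsymbol\alpha,\boldsymbol\beta)}({\bf W},{\bf W}^*)=\bigotimes_{i=1}^k W_{i,\alpha_i}W_{i,\beta_i}^*$, so that $q_{(\boldsymbol\alpha,\boldsymbol\beta)}({\bf W},{\bf W}^*)e_{\boldsymbol\gamma}=\bigotimes_{i=1}^k W_{i,\alpha_i}W_{i,\beta_i}^* e^i_{\gamma_i}$. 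From the formulas for $W_{i,\beta}$ and $W_{i,\beta}^*$ one records the basic fact: $W_{i,\alpha_i}W_{i,\beta_i}^* e^i_{\gamma_i}$ is nonzero precisely when $\beta_i$ is a prefix of $\gamma_i$, say $\gamma_i=\beta_i\delta_i$, and in that case it equals a strictly positive scalar multiple of $e^i_{\alpha_i\delta_i}$ (the scalar being $b_{i,\delta_i}^{(m_i)}/\sqrt{b_{i,\gamma_i}^{(m_i)}b_{i,\alpha_i\delta_i}^{(m_i)}}$). Hence $q_{(\boldsymbol\alpha,\boldsymbol\beta)}({\bf W},{\bf W}^*)e_{\boldsymbol\gamma}$ is either $0$ or a nonzero multiple of a \emph{single} basis vector $e_{\boldsymbol\omega}$.

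For part (ii) I would argue as follows. If the pairing $\left<q_{(\boldsymbol\alpha,\boldsymbol\beta)}({\bf W},{\bf W}^*)e_{\boldsymbol\gamma},e_{\boldsymbol\omega}\right>$ is nonzero, then by the previous step, for each $i$ there is $\delta_i\in\FF_{n_i}^+$ with $\gamma_i=\beta_i\delta_i$ and $\omega_i=\alpha_i\delta_i$ (and $\delta_i$ is uniquely determined by $\beta_i$ and $\gamma_i$). Now invoke the defining property of $\cJ$: for each $i$ at least one of $\alpha_i,\beta_i$ equals $g_0^i$. If $\beta_i=g_0^i$ then $\omega_i=\alpha_i\gamma_i$, so $\omega_i\geq_r\gamma_i$; if $\alpha_i=g_0^i$ then $\gamma_i=\beta_i\omega_i$, so $\gamma_i\geq_r\omega_i$. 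In either case $\omega_i\sim_c\gamma_i$, and inspecting the definition of the simplification function ${\bf s}$ shows that its $i$-th coordinate is exactly $(\alpha_i,\beta_i)$; thus $(\boldsymbol\omega,\boldsymbol\gamma)\in\cC$ and ${\bf s}(\boldsymbol\omega,\boldsymbol\gamma)=(\boldsymbol\alpha,\boldsymbol\beta)$. Conversely, if $(\boldsymbol\omega,\boldsymbol\gamma)\in\cC$ with ${\bf s}(\boldsymbol\omega,\boldsymbol\gamma)=(\boldsymbol\alpha,\boldsymbol\beta)$, then coordinatewise one of the three cases $\gamma_i>_r\omega_i$ (whence $\gamma_i=\beta_i\omega_i$, $\alpha_i=g_0^i$), $\omega_i>_r\gamma_i$ (whence $\omega_i=\alpha_i\gamma_i$, $\beta_i=g_0^i$), or $\omega_i=\gamma_i$ (whence $\alpha_i=\beta_i=g_0^i$) occurs; in each, $\beta_i$ is a prefix of $\gamma_i$ and $\omega_i=\alpha_i(\beta_i\backslash_{\mathrm{prefix}}\gamma_i)$, so by the first step the pairing is a strictly positive scalar, hence nonzero.

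For part (i) I would simply deduce it from (ii): for fixed $\boldsymbol\gamma$, each $q_{(\boldsymbol\alpha,\boldsymbol\beta)}({\bf W},{\bf W}^*)e_{\boldsymbol\gamma}$ that is nonzero is a multiple of the basis vector $e_{\boldsymbol\omega}$ with $(\boldsymbol\omega,\boldsymbol\gamma)\in\cC$ and ${\bf s}(\boldsymbol\omega,\boldsymbol\gamma)=(\boldsymbol\alpha,\boldsymbol\beta)$; since ${\bf s}(\boldsymbol\omega,\boldsymbol\gamma)$ recovers $(\boldsymbol\alpha,\boldsymbol\beta)$, distinct indices in $\cJ$ with nonzero vectors yield distinct $\boldsymbol\omega$'s, so the corresponding vectors lie along distinct basis vectors and are therefore pairwise orthogonal (vanishing vectors being trivially orthogonal to everything). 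The only point requiring genuine care is the bookkeeping around the two adjacency conventions: the $W_{i,j}$ are \emph{left} creation operators, so $W_{i,\beta}^*$ strips a \emph{prefix}, whereas the order $\geq_r$, the operation $\backslash_r$, and the map ${\bf s}$ defining $\cC$ and $\cJ$ are phrased via \emph{suffixes}; matching these correctly, together with the $\cJ$-normalization that one of $\alpha_i,\beta_i$ is trivial, is the heart of the verification, and no deeper obstacle is anticipated.
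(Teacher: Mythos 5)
Your proof is correct. For part (ii) it is essentially the paper's argument: both reduce to the factorwise computation on basis vectors --- the paper writes the pairing as $\prod_{i=1}^k\left<W_{i,\alpha_i}e^i_{\gamma_i},\,W_{i,\beta_i}e^i_{\omega_i}\right>$ and extracts the condition $\alpha_i\gamma_i=\beta_i\omega_i$, which under the $\cJ$-normalization (one of $\alpha_i,\beta_i$ trivial) is exactly your condition $\gamma_i=\beta_i\delta_i$, $\omega_i=\alpha_i\delta_i$ --- followed by the same case analysis. For part (i), however, you take a genuinely different and in fact more thorough route. The paper argues that $q_{(\boldsymbol\alpha,\boldsymbol\beta)}({\bf W},{\bf W}^*)$ is multi-homogeneous of degree $(s_1,\ldots,s_k)$ and that the spectral subspaces $\cE_{s_1+p_1',\ldots,s_k+p_k'}$ are pairwise orthogonal; as written this only yields orthogonality for pairs in $\cJ$ of \emph{distinct} degrees and says nothing about distinct pairs sharing the same degree (e.g.\ $\alpha_1=g^1_1$ versus $\alpha_1'=g^1_2$ when $n_1\geq 2$). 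Your derivation of (i) from (ii) --- each nonzero image is a positive multiple of a single basis vector $e_{\boldsymbol\omega}$, and ${\bf s}(\boldsymbol\omega,\boldsymbol\gamma)$ recovers $(\boldsymbol\alpha,\boldsymbol\beta)$, so distinct indices with nonzero images sit along distinct basis vectors --- covers all cases uniformly, which is what is actually needed later (e.g.\ in the Parseval computation of Theorem \ref{formal2}). The prefix/suffix bookkeeping you flag is resolved correctly: the identity $\omega_i=\alpha_i\gamma_i$ simultaneously expresses that $\alpha_i$ is a prefix of $\omega_i$ and that $\omega_i\backslash_r\gamma_i=\alpha_i$, so the left-creation conventions and the right-order $\geq_r$ used to define $\cC$ and ${\bf s}$ match as you claim.
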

\begin{proof} Recall that $\cJ$   the subset  of $\cC$ of all pairs $(\boldsymbol \sigma, \boldsymbol \beta):=(\sigma_1,\ldots,\sigma_k,\beta_1,\ldots, \beta_k)$,  where   $\sigma_i,\beta_i\in \FF_{n_i}^+$ with $|\sigma_i|=s_i^+$, $ |\beta_i|=s_i^-$, and $s_i\in\ZZ$.  Each pair $(\boldsymbol \sigma, \boldsymbol \beta)\in \cJ$ correspondds to a $k$-tuple $(s_1,\ldots, s_k)\in \ZZ^k$.  Note that
$q_ {(\boldsymbol \alpha, \boldsymbol \beta)}({\bf W}, {\bf W}^*)$ is a multi-homogeneous operator of degree $(s_1,\ldots, s_k)\in \ZZ^k$ , i.e.
$$
q_ {(\boldsymbol \alpha, \boldsymbol \beta)}({\bf W}, {\bf W}^*)\left( \cE_{p_1,\ldots, p_k}\right)\subset   \cE_{s_1+p_1,\ldots, s_k+p_k}
$$
 for every $(p_1,\ldots, p_k)\in \ZZ^k$. Let  $(p_1',\ldots, p_k')\in \ZZ^k$ be such that $e_{\boldsymbol \gamma}\in\cE_{p_1',\ldots, p_k'}$.
 Since the subspaces  $\{\cE_{s_1+p_1',\ldots, s_k+p_k'}\}_{(s_1,\ldots, s_k)\in \ZZ^k}$ are pairwise orthogonal, we deduce item (i).

To prove item (ii), let $\boldsymbol \alpha=(\alpha_1,\ldots, \alpha_k)$, $\boldsymbol \beta=(\beta_1,\ldots, \beta_k)$, $\boldsymbol \omega=(\omega_1,\ldots, \omega_k)$, and $\boldsymbol \gamma=(\gamma_1,\ldots, \gamma_k)$. Since  $ (\boldsymbol \alpha,\boldsymbol \beta)\in \cJ$, we have
${\bf W}_{i,\alpha_i} {\bf W}_{j,\beta_j}^*={\bf W}_{j,\beta_j}^*{\bf W}_{i,\alpha_i} $ for every $i,j\in \{1,\ldots, k\}$ and $\alpha_i\in \FF_{n_i}^+$, $\beta_j\in\FF_{n_j}^+$.
Consequently, using the definition of the universal model ${\bf W}$, we deduce that
\begin{equation*}
\begin{split}
\left<{\bf W}_{\boldsymbol\alpha}{\bf W}_{\boldsymbol\beta}^*e_{\boldsymbol\gamma},e_{\boldsymbol\omega}\right>&=\left<{\bf W}_{1,\alpha_1}\cdots {\bf W}_{k,\alpha_k}{\bf W}_{1,\beta_1}^*\cdots {\bf W}_{k,\beta_k}^*
(e_{\gamma_1}^1\otimes \cdots \otimes e_{\gamma_k}^k), e_{\omega_1}^1\otimes \cdots \otimes e_{\omega_k}^k
\right>\\
&=\prod_{i=1}^k \left< {\bf W}_{i,\alpha_i}e^i_{\gamma_i}, {\bf W}_{i,\beta_i}e^i_{\omega_i}
\right>\\
&=
\prod_{i=1}^k  \left(\sqrt{\frac{b^{(m_i)}_{i,\min\{\omega_i, \gamma_i\}}}{b^{(m_i)}_{i,\max\{\omega_i, \gamma_i\}}}}
\left< e^i_{\alpha_i\gamma_i}, e^i_{\beta_i\omega_i}\right>\right).
\end{split}
\end{equation*}
Hence,  $
  \left<q_ {(\boldsymbol \alpha, \boldsymbol \beta)}({\bf W}, {\bf W}^*)e_{\boldsymbol \gamma}, e_{\boldsymbol\omega}\right>\neq 0
  $
  if and only if
  \begin{equation}
  \label{agbo}
  \alpha_i\gamma_i=\beta_i \omega_i \qquad \text{  for every  }  i\in \{1,\ldots, k\}.
  \end{equation}
 Since  $ (\boldsymbol \alpha,\boldsymbol \beta)\in \cJ$, we have $|\alpha_i|=s_i^+$ and $|\beta_i|=s_i^+$ for some $s_i\in \ZZ$.  If $s_i\in \ZZ$ with $s_i\geq 0$, then
 $\beta_i=g_0^i$ and relation \eqref{agbo} becomes $\omega_i=\alpha_i\gamma_i$. In case, $s_i\in \ZZ$ with $s_i< 0$, we must have $\alpha_i=g_0^i$ and $\beta_i \omega_i=\gamma_i$. Therefore,  if $ (\boldsymbol \alpha,\boldsymbol \beta)\in \cJ$, then the relation \eqref{agbo}   holds  if and  and only if
 $(\boldsymbol \omega, \boldsymbol\gamma)\in \cC$ and
  ${\bf s}(\boldsymbol \omega, \boldsymbol\gamma)=(\boldsymbol\alpha,\boldsymbol\beta)$.
The proof is complete.
\end{proof}

\begin{lemma}\label{monom2}  The following statements hold.
\begin{enumerate}
\item[(i)] If $C\in B(\cK)$ and $(\boldsymbol \alpha, \boldsymbol \beta)\in \cJ$, then
$C\otimes {\bf W}_{\boldsymbol\alpha}{\bf W}_{\boldsymbol\beta}^*$ is a weighted multi-Toeplitz operator.
\item[(ii)] The set of all weighted multi-Toeplitz operators is WOT-closed.
\end{enumerate}
\end{lemma}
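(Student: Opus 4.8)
For part (i), the plan is to write down the matrix of $T:=C\otimes{\bf W}_{\boldsymbol\alpha}{\bf W}_{\boldsymbol\beta}^*$ with respect to the standard basis and then read off the defining coefficients. I would start from the identity $\langle T(x\otimes e_{\boldsymbol\gamma}),y\otimes e_{\boldsymbol\omega}\rangle=\langle Cx,y\rangle\langle{\bf W}_{\boldsymbol\alpha}{\bf W}_{\boldsymbol\beta}^*e_{\boldsymbol\gamma},e_{\boldsymbol\omega}\rangle$ and invoke the computation carried out in the proof of Lemma \ref{monom1}(ii): the scalar $\langle{\bf W}_{\boldsymbol\alpha}{\bf W}_{\boldsymbol\beta}^*e_{\boldsymbol\gamma},e_{\boldsymbol\omega}\rangle$ vanishes unless $(\boldsymbol\omega,\boldsymbol\gamma)\in\cC$ and ${\bf s}(\boldsymbol\omega,\boldsymbol\gamma)=(\boldsymbol\alpha,\boldsymbol\beta)$, in which case all the factors $\langle e^i_{\alpha_i\gamma_i},e^i_{\beta_i\omega_i}\rangle$ equal $1$ and the remaining product is exactly $\prod_{i=1}^k\sqrt{b^{(m_i)}_{i,\min\{\omega_i,\gamma_i\}}/b^{(m_i)}_{i,\max\{\omega_i,\gamma_i\}}}=\tau_{(\boldsymbol\omega,\boldsymbol\gamma)}$. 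I would then set $A_{(\boldsymbol\alpha,\boldsymbol\beta)}:=C$ and $A_{(\boldsymbol\sigma,\boldsymbol\tau)}:=0$ for every other $(\boldsymbol\sigma,\boldsymbol\tau)\in\cJ$, and verify that $\langle T(x\otimes e_{\boldsymbol\gamma}),y\otimes e_{\boldsymbol\omega}\rangle$ equals $\tau_{(\boldsymbol\omega,\boldsymbol\gamma)}\langle A_{{\bf s}(\boldsymbol\omega,\boldsymbol\gamma)}x,y\rangle$ when $(\boldsymbol\omega,\boldsymbol\gamma)\in\cC$ and equals $0$ otherwise; this is precisely the condition in Definition \ref{MT}, so $T$ is weighted multi-Toeplitz.

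For part (ii), the plan is to use the equivalent description in Proposition \ref{Toep-def2}, which says that $T$ is weighted multi-Toeplitz precisely when its matrix entries satisfy the scalar identities \eqref{MT2}. For fixed $x,y\in\cK$ and fixed $\boldsymbol\omega,\boldsymbol\gamma\in{\bf F}_{\bf n}^+$, each such identity is a linear equation in $X$ whose two sides, $\langle X(x\otimes e_{\boldsymbol\gamma}),y\otimes e_{\boldsymbol\omega}\rangle$ and $\frac{\tau_{(\boldsymbol\omega,\boldsymbol\gamma)}}{\tau_{(\boldsymbol\omega',\boldsymbol\gamma')}}\langle X(x\otimes e_{\boldsymbol\gamma'}),y\otimes e_{\boldsymbol\omega'}\rangle$, are WOT-continuous linear functionals of $X$; hence the solution set of each identity is WOT-closed. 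The set of all weighted multi-Toeplitz operators, being the intersection of these closed sets over all admissible data, is then WOT-closed. Equivalently, and perhaps cleaner to write out, one takes a net $T_\lambda\to T$ in the WOT with each $T_\lambda$ satisfying \eqref{MT2}, and passes to the limit in each scalar identity to conclude that $T$ satisfies \eqref{MT2}, hence is weighted multi-Toeplitz by Proposition \ref{Toep-def2}.

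I do not expect a genuine obstacle here; both parts reduce to bookkeeping on top of Lemma \ref{monom1} and Proposition \ref{Toep-def2}. The only point demanding a little care is the normalization in part (i): one must check that the nonzero matrix entry of ${\bf W}_{\boldsymbol\alpha}{\bf W}_{\boldsymbol\beta}^*$ is exactly the weight $\tau_{(\boldsymbol\omega,\boldsymbol\gamma)}$ built into Definition \ref{MT}, which is precisely the content of the last line of the chain of equalities in the proof of Lemma \ref{monom1}(ii).
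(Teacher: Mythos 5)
Your proposal is correct and follows essentially the same route as the paper: part (i) via the matrix-entry computation from Lemma \ref{monom1}(ii) (the paper states the same identity, while you make the choice $A_{(\boldsymbol\alpha,\boldsymbol\beta)}:=C$ and $A_{(\boldsymbol\sigma,\boldsymbol\tau)}:=0$ otherwise explicit), and part (ii) by passing to WOT-limits in the entrywise conditions of Proposition \ref{Toep-def2}, which is exactly what the paper's one-line argument intends.
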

\begin{proof}  If $(\boldsymbol \alpha, \boldsymbol \beta)\in \cJ$ and $(\boldsymbol\omega, \boldsymbol\gamma)\in {\bf F}_{\bf n }^+\times  {\bf F}_{\bf n }^+$, then, as in the proof of Lemma \ref{monom1}, we deduce that
\begin{equation*}
\begin{split}
&\left<C\otimes {\bf W}_{1,\alpha_1}\cdots {\bf W}_{k,\alpha_k}{\bf W}_{1,\beta_1}^*\cdots {\bf W}_{k,\beta_k}^*
(e_{\gamma_1}^1\otimes \cdots \otimes e_{\gamma_k}^k), e_{\omega_1}^1\otimes \cdots \otimes e_{\omega_k}^k
\right> \\
&\qquad\qquad=
\begin{cases}
\tau_{(\boldsymbol\omega,\boldsymbol\gamma)}
\left<Cx,y\right>, & \text{ if  } (\boldsymbol \omega, \boldsymbol\gamma)\in \cC,\\
0,&  \text{ if  } (\boldsymbol \omega, \boldsymbol\gamma)\in ( {\bf F}_{\bf n }^+\times  {\bf F}_{\bf n }^+)\backslash \cC
\end{cases}
\end{split}
\end{equation*}
where $\tau_{(\boldsymbol\omega,\boldsymbol\gamma)}=\prod_{i=1}^k  \sqrt{\frac{b^{(m_i)}_{i,\min\{\omega_i, \gamma_i\}}}{b^{(m_i)}_{i,\max\{\omega_i, \gamma_i\}}}}$.
Therefore, item (i) holds. Since item (ii) follows easily  due to Proposition  \ref{Toep-def2} by taking appropriate limits, the proof is complete.
\end{proof}

\begin{theorem}  \label{formal2} Any weighted multi-Toeplitz operator  $T\in B(\cK\otimes \bigotimes_{s=1}^k  F^2(H_{n_s}))$ has
a unique formal Fourier representation
$$
\varphi_T({\bf W}, {\bf W}^*):=\sum_{(\boldsymbol \alpha, \boldsymbol \beta)\in \cJ}A_{   (\boldsymbol \alpha, \boldsymbol \beta)}\otimes {\bf W}_{\boldsymbol\alpha}{\bf W}_{\boldsymbol\beta}^*
$$
where $\{A_{   (\boldsymbol \alpha, \boldsymbol \beta)}\}_{(\boldsymbol \alpha, \boldsymbol \beta)\in \cJ}$ are some operators on the Hilbert space $\cK$ such that
$$
Tp=\varphi_T({\bf W}, {\bf W}^*)p,\qquad p\in \cP_K.
$$
\end{theorem}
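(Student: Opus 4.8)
The plan is to take for $\varphi_T({\bf W},{\bf W}^*)$ precisely the formal series whose coefficients are the operators $A_{(\boldsymbol\sigma,\boldsymbol\beta)}\in B(\cK)$, $(\boldsymbol\sigma,\boldsymbol\beta)\in\cJ$, furnished by Definition \ref{MT}, and then to verify two things: (a) for every monomial $x\otimes e_{\boldsymbol\gamma}$ with $x\in\cK$ and $\boldsymbol\gamma\in{\bf F}_{\bf n}^+$, the (generally infinite) series $\sum_{(\boldsymbol\alpha,\boldsymbol\beta)\in\cJ}(A_{(\boldsymbol\alpha,\boldsymbol\beta)}\otimes {\bf W}_{\boldsymbol\alpha}{\bf W}_{\boldsymbol\beta}^*)(x\otimes e_{\boldsymbol\gamma})$ converges in norm to $T(x\otimes e_{\boldsymbol\gamma})$; and (b) the coefficients $A_{(\boldsymbol\alpha,\boldsymbol\beta)}$ are uniquely determined by $T$. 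Granting (a), linearity yields $\varphi_T({\bf W},{\bf W}^*)p=Tp$ for all $p\in\cP_\cK$, and (b) supplies the uniqueness clause.

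For (a) I would fix $\boldsymbol\gamma$ and invoke Lemma \ref{monom1}: part (i) says the vectors ${\bf W}_{\boldsymbol\alpha}{\bf W}_{\boldsymbol\beta}^*e_{\boldsymbol\gamma}$, $(\boldsymbol\alpha,\boldsymbol\beta)\in\cJ$, are pairwise orthogonal, and the computation inside the proof of part (ii) (together with the fact that each ${\bf W}_{i,j}$ and ${\bf W}_{i,j}^*$ sends a basis vector to a scalar multiple of a basis vector) shows that each nonzero ${\bf W}_{\boldsymbol\alpha}{\bf W}_{\boldsymbol\beta}^*e_{\boldsymbol\gamma}$ equals $\tau_{(\boldsymbol\omega,\boldsymbol\gamma)}e_{\boldsymbol\omega}$ for the unique $\boldsymbol\omega$ with $(\boldsymbol\omega,\boldsymbol\gamma)\in\cC$ and ${\bf s}(\boldsymbol\omega,\boldsymbol\gamma)=(\boldsymbol\alpha,\boldsymbol\beta)$, the correspondence $(\boldsymbol\alpha,\boldsymbol\beta)\mapsto\boldsymbol\omega$ being a bijection onto $\{\boldsymbol\omega:(\boldsymbol\omega,\boldsymbol\gamma)\in\cC\}$. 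Hence $(A_{(\boldsymbol\alpha,\boldsymbol\beta)}\otimes {\bf W}_{\boldsymbol\alpha}{\bf W}_{\boldsymbol\beta}^*)(x\otimes e_{\boldsymbol\gamma})=\tau_{(\boldsymbol\omega,\boldsymbol\gamma)}\,(A_{(\boldsymbol\alpha,\boldsymbol\beta)}x)\otimes e_{\boldsymbol\omega}$, while Definition \ref{MT} says the component of $T(x\otimes e_{\boldsymbol\gamma})$ along $e_{\boldsymbol\omega}$ is exactly $\tau_{(\boldsymbol\omega,\boldsymbol\gamma)}A_{(\boldsymbol\alpha,\boldsymbol\beta)}x$ (and the components along $e_{\boldsymbol\omega}$ with $(\boldsymbol\omega,\boldsymbol\gamma)\notin\cC$ vanish, matching the absence of corresponding terms). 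Summing squared norms over $\boldsymbol\omega$ then gives
$$\sum_{(\boldsymbol\alpha,\boldsymbol\beta)\in\cJ}\bigl\|(A_{(\boldsymbol\alpha,\boldsymbol\beta)}\otimes {\bf W}_{\boldsymbol\alpha}{\bf W}_{\boldsymbol\beta}^*)(x\otimes e_{\boldsymbol\gamma})\bigr\|^2=\|T(x\otimes e_{\boldsymbol\gamma})\|^2<\infty,$$
so the (pairwise orthogonal) series converges in norm; since its partial sums have the same components as $T(x\otimes e_{\boldsymbol\gamma})$ with respect to the orthonormal basis $\{e_{\boldsymbol\omega}\}$, its sum is $T(x\otimes e_{\boldsymbol\gamma})$, and linearity completes (a).

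For (b) I would, given $(\boldsymbol\alpha,\boldsymbol\beta)\in\cJ$ with associated exponent $(s_1,\dots,s_k)\in\ZZ^k$, choose a convenient pair $(\boldsymbol\omega,\boldsymbol\gamma)$ componentwise: $\omega_i:=\alpha_i$, $\gamma_i:=g_0^i$ when $s_i\geq0$, and $\omega_i:=g_0^i$, $\gamma_i:=\beta_i$ when $s_i<0$. A direct check gives $(\boldsymbol\omega,\boldsymbol\gamma)\in\cC$, ${\bf s}(\boldsymbol\omega,\boldsymbol\gamma)=(\boldsymbol\alpha,\boldsymbol\beta)$, and $\tau_{(\boldsymbol\omega,\boldsymbol\gamma)}\neq0$. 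If $\psi=\sum_{(\boldsymbol\alpha',\boldsymbol\beta')\in\cJ}B_{(\boldsymbol\alpha',\boldsymbol\beta')}\otimes {\bf W}_{\boldsymbol\alpha'}{\bf W}_{\boldsymbol\beta'}^*$ is any formal series also satisfying $\psi({\bf W},{\bf W}^*)p=Tp$ on $\cP_\cK$, then pairing $\psi({\bf W},{\bf W}^*)(x\otimes e_{\boldsymbol\gamma})$ against $y\otimes e_{\boldsymbol\omega}$ and using Lemma \ref{monom1}(ii) to discard every term except $(\boldsymbol\alpha',\boldsymbol\beta')=(\boldsymbol\alpha,\boldsymbol\beta)$ forces $\tau_{(\boldsymbol\omega,\boldsymbol\gamma)}\langle B_{(\boldsymbol\alpha,\boldsymbol\beta)}x,y\rangle=\langle T(x\otimes e_{\boldsymbol\gamma}),y\otimes e_{\boldsymbol\omega}\rangle$ for all $x,y\in\cK$, so $B_{(\boldsymbol\alpha,\boldsymbol\beta)}$ depends only on $T$ and equals $A_{(\boldsymbol\alpha,\boldsymbol\beta)}$; this also shows $\varphi_T$ is well defined regardless of the choice of defining coefficients.

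I expect the only genuine obstacle to be part (a): interpreting the generally infinite series $\varphi_T({\bf W},{\bf W}^*)(x\otimes e_{\boldsymbol\gamma})$ as an actual vector of $\cK\otimes\bigotimes_{s=1}^k F^2(H_{n_s})$. This is exactly where the pairwise orthogonality of the monomials evaluated at $e_{\boldsymbol\gamma}$ (Lemma \ref{monom1}(i)), together with the precise identification of the relevant scalar as the multi-Toeplitz weight $\tau_{(\boldsymbol\omega,\boldsymbol\gamma)}$ (from the proof of Lemma \ref{monom1}(ii)), is used to convert finiteness of $\|T(x\otimes e_{\boldsymbol\gamma})\|^2$ into convergence of the series; everything else reduces to bookkeeping with Definition \ref{MT}.
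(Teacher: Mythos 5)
Your proposal is correct and follows essentially the same route as the paper: the coefficients are read off from Definition \ref{MT}, Lemma \ref{monom1} supplies both the pairwise orthogonality of the vectors ${\bf W}_{\boldsymbol\alpha}{\bf W}_{\boldsymbol\beta}^*e_{\boldsymbol\gamma}$ and the identification of the nonzero matrix entries with the weights $\tau_{(\boldsymbol\omega,\boldsymbol\gamma)}$, so that Parseval converts finiteness of $\|T(x\otimes e_{\boldsymbol\gamma})\|^2$ into norm convergence of the series, and uniqueness is obtained by pairing against $y\otimes e_{\boldsymbol\alpha}$ and $x\otimes e_{\boldsymbol\beta}$ with $(\boldsymbol\alpha,\boldsymbol\beta)\in\cJ$ and using $\tau_{(\boldsymbol\alpha,\boldsymbol\beta)}\neq 0$. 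The only cosmetic difference is that you work at the level of vectors (each monomial sends $e_{\boldsymbol\gamma}$ to a scalar multiple of a single $e_{\boldsymbol\omega}$) while the paper works with matrix entries, which is the same computation.
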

\begin{proof} Since $T\in B(\cK\otimes \bigotimes_{s=1}^k  F^2(H_{n_s}))$ is a weighted multi-Toeplitz operator,  there exist operators $\{A_{(\boldsymbol \alpha, \boldsymbol \beta)}\}_{(\boldsymbol \alpha; \boldsymbol \beta)\in \cJ}\subset B(\cK)$ such that Definition \ref{MT} holds. More precisely,
$$
 \left<A_{(\boldsymbol \alpha, \boldsymbol \beta)}x,y\right>=\frac{1}{\tau_{(\boldsymbol \alpha, \boldsymbol \beta)}}\left<T(x\otimes e_{\boldsymbol\beta}), y\otimes e_{\boldsymbol\alpha} \right>,\qquad x,y\in \cK.
 $$
Due to  Definition \ref{MT}, if $\boldsymbol\gamma\in {\bf F}_{\bf n }^+$ and $x\in \cK$, we have
$$
T(x\otimes e_{\boldsymbol\gamma }) =\sum_{\boldsymbol\omega\in {\bf F}_{\bf n }^+: (\boldsymbol\omega, \boldsymbol\gamma)\in \cC}
 \tau_{(\boldsymbol\omega,\boldsymbol\gamma)}A_{{\bf s}(\boldsymbol \omega, \boldsymbol\gamma)}x \otimes e_{\boldsymbol\omega }
$$
is a vector in $\cK\otimes \bigotimes_{i=1}^k F^2(H_{n_i})$. Consequently, the series
\begin{equation}
\label{con}
\sum_{\boldsymbol\omega\in {\bf F}_{\bf n }^+: (\boldsymbol\omega, \boldsymbol\gamma)\in \cC}
 \tau_{(\boldsymbol\omega,\boldsymbol\gamma)}^2A_{{\bf s}(\boldsymbol \omega, \boldsymbol\gamma)}^*A_{{\bf s}(\boldsymbol \omega, \boldsymbol\gamma)}\quad \text{ is WOT-convergent}.
\end{equation}

Now, we consider the formal power series
$$
\varphi_T({\bf W}, {\bf W}^*):=\sum_{(\boldsymbol \alpha, \boldsymbol \beta)\in \cJ}A_{   (\boldsymbol \alpha, \boldsymbol \beta)}\otimes {\bf W}_{\boldsymbol\alpha}{\bf W}_{\boldsymbol\beta}^*
$$
and show that
$$
\varphi_T({\bf W}, {\bf W}^*)(x\otimes e_{\boldsymbol \gamma}):=\sum_{(\boldsymbol \alpha, \boldsymbol \beta)\in \cJ}A_{   (\boldsymbol \alpha, \boldsymbol \beta)}x\otimes {\bf W}_{\boldsymbol\alpha}{\bf W}_{\boldsymbol\beta}^*e_{\boldsymbol \gamma}
$$
is convergent for every $x\in \cK$ and $\boldsymbol\gamma\in {\bf F}_{\bf n }^+$. Indeed,  due to Lemma \ref{monom1},
if  $\boldsymbol\omega, \boldsymbol\gamma \in {\bf F}_{\bf n }^+$, then
  $$
  \left<{\bf W}_{\boldsymbol\alpha}{\bf W}_{\boldsymbol\beta}^*e_{\boldsymbol \gamma}, e_{\boldsymbol\omega}\right>\neq 0
  $$
  if and only if  $(\boldsymbol \omega, \boldsymbol\gamma)\in \cC$ and
  ${\bf s}(\boldsymbol \omega, \boldsymbol\gamma)=(\boldsymbol\alpha,\boldsymbol\beta)$. In this case, we have
  $\left<{\bf W}_{\boldsymbol\alpha}{\bf W}_{\boldsymbol\beta}^*e_{\boldsymbol \gamma}, e_{\boldsymbol\omega}\right>=\tau_{(\boldsymbol\omega, \boldsymbol\gamma)}$.
Using  Parseval's identity, we deduce that
\begin{equation*}
\begin{split}
\|{\bf W}_{\boldsymbol\alpha}{\bf W}_{\boldsymbol\beta}^*e_{\boldsymbol \gamma}\|^2
&=\sum_{\omega\in {\bf F}_{\bf n }^+} \left|\left<{\bf W}_{\boldsymbol\alpha}{\bf W}_{\boldsymbol\beta}^*e_{\boldsymbol \gamma},e_{\boldsymbol\omega}\right>\right|^2 \\
&=
\sum_{{\omega\in {\bf F}_{\bf n }^+: (\boldsymbol \omega, \boldsymbol\gamma)\in \cC}\atop{
  {\bf s}(\boldsymbol \omega, \boldsymbol\gamma)=(\boldsymbol\alpha,\boldsymbol\beta)}} \left|\left<{\bf W}_{\boldsymbol\alpha}{\bf W}_{\boldsymbol\beta}^*e_{\boldsymbol \gamma},e_{\boldsymbol\omega}\right>\right|^2
  \\
  &=\sum_{{\omega\in {\bf F}_{\bf n }^+: (\boldsymbol \omega, \boldsymbol\gamma)\in \cC}\atop{
  {\bf s}(\boldsymbol \omega, \boldsymbol\gamma)=(\boldsymbol\alpha,\boldsymbol\beta)}} \tau_{(\boldsymbol\omega, \boldsymbol\gamma)}^2.
\end{split}
\end{equation*}
Consequently, due to Lemma \ref{monom1} part (i), we have
\begin{equation*}
\begin{split}
\left\|\varphi_T({\bf W}, {\bf W}^*)(x\otimes e_{\boldsymbol \gamma})\right\|^2
&=
\sum_{{(\boldsymbol\alpha,\boldsymbol\beta)\in \cJ }}
\|A_{   (\boldsymbol \alpha, \boldsymbol \beta)}x\|^2\sum_{{\omega\in {\bf F}_{\bf n }^+: (\boldsymbol \omega, \boldsymbol\gamma)\in \cC}\atop{
  {\bf s}(\boldsymbol \omega, \boldsymbol\gamma)=(\boldsymbol\alpha,\boldsymbol\beta)}} \tau_{(\boldsymbol\omega, \boldsymbol\gamma)}^2\\
  &=\sum_{{\omega\in {\bf F}_{\bf n }^+: (\boldsymbol \omega, \boldsymbol\gamma)\in \cC}}
  \|A_{{\bf s}(\boldsymbol \omega, \boldsymbol\gamma)}x\|^2\tau_{(\boldsymbol\omega, \boldsymbol\gamma)}^2
  \end{split}
\end{equation*}
which is finite due to elation \eqref{con}, and proves our assertion.
Now, using Lemma \ref{monom2} and the results above, we deduce that
\begin{equation*}
\begin{split}
\left<\varphi_T({\bf W}, {\bf W}^*)(x\otimes e_{\boldsymbol \gamma}, y\otimes e_{\boldsymbol\omega}
\right>
&=
\sum_{(\boldsymbol \alpha, \boldsymbol \beta)\in \cJ}\left<A_{   (\boldsymbol \alpha, \boldsymbol \beta)}x,y\right>\left< {\bf W}_{\boldsymbol\alpha}{\bf W}_{\boldsymbol\beta}^*e_{\boldsymbol \gamma}, e_{\boldsymbol\omega}\right>\\
&=\begin{cases}
\tau_{(\boldsymbol\omega,\boldsymbol\gamma)}\left<A_{{\bf s}(\boldsymbol \omega, \boldsymbol\gamma)}x,y\right>, & \text{ if  } (\boldsymbol \omega, \boldsymbol\gamma)\in \cC,\\
0,&  \text{ if  } (\boldsymbol \omega, \boldsymbol\gamma)\in ({\bf F}_{\bf n }^+\times {\bf F}_{\bf n }^+)\backslash \cC,
\end{cases}
\\
&=
\left<T(x\otimes e_{\boldsymbol\gamma }), y\otimes e_{\boldsymbol\omega} \right>.
\end{split}
\end{equation*}
Hence, we have  $\varphi_T({\bf W}, {\bf W}^*)(x\otimes e_{\boldsymbol \gamma})=T(x\otimes e_{\boldsymbol\gamma })$ for every $x\in \cK$ and $\boldsymbol\gamma \in {\bf F}_{\bf n }^+$.

To prove uniqueness, assume that
$
\varphi({\bf W}, {\bf W}^*):=\sum_{(\boldsymbol \alpha, \boldsymbol \beta)\in \cJ}A_{   (\boldsymbol \alpha, \boldsymbol \beta)}'\otimes {\bf W}_{\boldsymbol\alpha}{\bf W}_{\boldsymbol\beta}^*
$
is a formal series such that $Tp=\varphi({\bf W}, {\bf W}^*)p$ for every $p\in \cK$. Then we must have
$\varphi_T({\bf W}, {\bf W}^*)p=\varphi({\bf W}, {\bf W}^*)p$. On the other hand,  if $(\boldsymbol \alpha, \boldsymbol \beta)\in \cJ$ then ${\bf s}(\boldsymbol \alpha, \boldsymbol \beta)=(\boldsymbol \alpha, \boldsymbol \beta)$. In this case, we have
$$
\left<\varphi_T({\bf W}, {\bf W}^*)(x\otimes e_{\boldsymbol \beta}, y\otimes e_{\boldsymbol\alpha}
\right>=\tau_{(\boldsymbol \alpha, \boldsymbol \beta))}\left<A_{(\boldsymbol \alpha, \boldsymbol \beta)}x,y\right>
$$
and
$$
\left<\varphi({\bf W}, {\bf W}^*)(x\otimes e_{\boldsymbol \beta}, y\otimes e_{\boldsymbol\alpha}
\right>=\tau_{(\boldsymbol \alpha, \boldsymbol \beta)}\left<A_{(\boldsymbol \alpha, \boldsymbol \beta)}'x,y\right>.
$$
Since $\tau_{(\boldsymbol \alpha, \boldsymbol \beta)}\neq 0$, the relations above imply
$A_{(\boldsymbol \alpha, \boldsymbol \beta)}=A_{(\boldsymbol \alpha, \boldsymbol \beta)}'$  for every
 $(\boldsymbol \alpha, \boldsymbol \beta)\in \cJ$.
The proof is complete.
\end{proof}

We remark that the formal Fourier  series $\varphi_T({\bf W}, {\bf W}^*)$  associated with the weighted multi-Toeplitz operator $T$ can be viewed as  its  noncommutative symbol.

\begin{theorem}\label{main2}
If $T\in B(\cK\otimes \bigotimes_{s=1}^k  F^2(H_{n_s}))$, then the following statements are equivalent.
\begin{enumerate}
\item[(i)] $T$ satisfies the Brown-Halmos condition \eqref{BH}.

\item[(ii)]   $T$ is a weighted multi-Toeplitz operator.
\end{enumerate}
\end{theorem}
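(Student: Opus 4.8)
The plan is to prove the two implications separately, leaning on the structure results already in place.

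\textbf{(i)$\Rightarrow$(ii).} Here I would just chain Theorem \ref{main} with Lemma \ref{monom2}. If $T$ satisfies the Brown--Halmos condition \eqref{BH}, then by the equivalence (i)$\iff$(iv) of Theorem \ref{main} we have $T\in\mathrm{span}\{C\otimes {\bf W}_{\boldsymbol\alpha}{\bf W}_{\boldsymbol\beta}^*:C\in B(\cK),\,(\boldsymbol\alpha,\boldsymbol\beta)\in\cJ\}^{-\mathrm{WOT}}$. By Lemma \ref{monom2}(i) each generator $C\otimes {\bf W}_{\boldsymbol\alpha}{\bf W}_{\boldsymbol\beta}^*$ is a weighted multi-Toeplitz operator; since the conditions defining the class in Definition \ref{MT} are linear in the matrix entries, every finite linear combination is again weighted multi-Toeplitz, and by Lemma \ref{monom2}(ii) the class is WOT-closed. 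Hence $T$, lying in that WOT-closure, is weighted multi-Toeplitz.

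\textbf{(ii)$\Rightarrow$(i), first step.} Assume $T$ is weighted multi-Toeplitz. By Theorem \ref{formal2} it has a formal Fourier representation $\varphi_T({\bf W},{\bf W}^*)=\sum_{(\boldsymbol\alpha,\boldsymbol\beta)\in\cJ}A_{(\boldsymbol\alpha,\boldsymbol\beta)}\otimes {\bf W}_{\boldsymbol\alpha}{\bf W}_{\boldsymbol\beta}^*$ with $Tp=\varphi_T({\bf W},{\bf W}^*)p$ for all $p\in\cP_\cK$. Grouping these terms by multidegree $(s_1,\ldots,s_k)$ (those with $|\alpha_i|=s_i^+$, $|\beta_i|=s_i^-$) yields the operators $q_{s_1,\ldots,s_k}({\bf W},{\bf W}^*)$ of relation \eqref{q}. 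I would then identify these with the multi-homogeneous parts of $T$: for $p=h\otimes e_{\boldsymbol\gamma}\in\cP_\cK$, since $q_{s_1,\ldots,s_k}({\bf W},{\bf W}^*)$ is multi-homogeneous of degree $(s_1,\ldots,s_k)$ (Lemma \ref{monom1}), the vectors $\{q_{s_1,\ldots,s_k}({\bf W},{\bf W}^*)p\}_{(s_1,\ldots,s_k)}$ lie in distinct spectral subspaces of the torus representation $\Gamma$, so $q_{s_1,\ldots,s_k}({\bf W},{\bf W}^*)p$ is exactly the component of $Tp$ in $\cK\otimes\cE_{p_1+s_1,\ldots,p_k+s_k}$, which by the computation preceding Definition \ref{mhp} equals $T_{s_1,\ldots,s_k}p$. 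As $T_{s_1,\ldots,s_k}$ is bounded and $\cP_\cK$ is dense, this gives $T_{s_1,\ldots,s_k}=q_{s_1,\ldots,s_k}({\bf W},{\bf W}^*)$.

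\textbf{(ii)$\Rightarrow$(i), conclusion.} By Lemma \ref{qBH}, each $T_{s_1,\ldots,s_k}=q_{s_1,\ldots,s_k}({\bf W},{\bf W}^*)$ satisfies \eqref{BH} (equivalently \eqref{BH-X}). Next I would note that the set of operators satisfying \eqref{BH} is a SOT-closed linear subspace of $B(\cK\otimes\bigotimes_{s=1}^kF^2(H_{n_s}))$: both sides of \eqref{BH}, as functions of $X$, are SOT-continuous, being built from the single fixed bounded operator $\widetilde{\bf\Lambda}_i'$ and the finitely many bounded operators $\widetilde{\bf\Lambda}_{i,\beta}$ with $|\beta|\le m_i-1$ by left and right multiplication and finite sums. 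Proposition \ref{homo-decomp} then presents $T$ as an iterated SOT-limit of the Fej\'er--Ces\`aro means $\Sigma_{N_1,\ldots,N_k}:=\sum_{|s_j|\le N_j}\prod_{j=1}^k\bigl(1-\tfrac{|s_j|}{N_j+1}\bigr)T_{s_1,\ldots,s_k}$; each $\Sigma_{N_1,\ldots,N_k}$ is a finite scalar combination of operators satisfying \eqref{BH}, hence satisfies \eqref{BH}, and these means are uniformly bounded by $\|T\|$ (Fej\'er positivity), so by SOT-closedness $T$ satisfies \eqref{BH}. (An alternative, closer to the proof of Theorem \ref{main}, is to re-run the noncommutative Berezin computation leading to \eqref{KK}---which uses only the boundedness of $T$ and the identification $T_{s_1,\ldots,s_k}=q_{s_1,\ldots,s_k}({\bf W},{\bf W}^*)$, not the Brown--Halmos hypothesis---to obtain $T=\text{SOT-}\lim_{r\to1}F(r{\bf W})$ for the bounded free $k$-pluriharmonic function $F({\bf X})=\sum_{(s_1,\ldots,s_k)}q_{s_1,\ldots,s_k}({\bf X},{\bf X}^*)$, i.e.\ condition (ii) of Theorem \ref{main}.)

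\textbf{Main obstacle.} The step that looks routine but is genuinely load-bearing is the identification $T_{s_1,\ldots,s_k}=q_{s_1,\ldots,s_k}({\bf W},{\bf W}^*)$ of the multi-homogeneous parts of $T$ with the grouped Fourier blocks, together with the care needed to guarantee that reassembling $T$ from its parts---whether by Fej\'er--Ces\`aro means or by the radial limit $r\to1$---invokes only the boundedness of $T$ and the orthogonality of the spectral subspaces (in particular that the uniform bound $\|F(r{\bf W})\|\le\|T\|$ is not circular), and not the Brown--Halmos equation one is trying to prove.
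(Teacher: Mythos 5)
Your proposal is correct and follows essentially the same route as the paper: (i)$\Rightarrow$(ii) via Theorem \ref{main}(iv) and Lemma \ref{monom2}, and (ii)$\Rightarrow$(i) via Theorem \ref{formal2}, the spectral-subspace identification $T_{s_1,\ldots,s_k}=q_{s_1,\ldots,s_k}({\bf W},{\bf W}^*)$, Lemma \ref{qBH}, and reassembly through Proposition \ref{homo-decomp} together with the (WOT-, hence SOT-) closedness of the set of solutions of \eqref{BH}. Your explicit remark that the Fej\'er--Ces\`aro means are uniformly bounded and that each side of \eqref{BH} is a finite combination of maps $X\mapsto AXB$ only makes explicit what the paper leaves implicit.
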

\begin{proof}  Assume that $T$ satisfies the Brown-Halmos condition \eqref{BH}.
Due to Theorem \ref{main}, we have
$$
T\in\text{\rm span}\left\{ C\otimes {\bf W}_{\boldsymbol\alpha}{\bf W}_{\boldsymbol\beta}^* :\   C\in B(\cK),   (\boldsymbol \alpha; \boldsymbol \beta)\in \cJ\right\}^{-\text{\rm WOT}}.
$$
According to Lemma \ref{monom2}, any operator of the form $C\otimes {\bf W}_{\boldsymbol\alpha}{\bf W}_{\boldsymbol\beta}^* $ is a weighted multi-Toeplitz operator. Since the set of all weighted multi-Toeplitz operators is WOT-closed, we deduce that $T$ is a weighted multi-Toeplitz operator.

Now, we prove the implication (ii)$\implies$(i). To this end, assume that $T$ is a weighted multi-Toeplitz operator. Due to Theorem \ref{formal2},
$T$ has
a unique formal Fourier representation
$$
\varphi_T({\bf W}, {\bf W}^*):=\sum_{(\boldsymbol \alpha, \boldsymbol \beta)\in \cJ}A_{   (\boldsymbol \alpha, \boldsymbol \beta)}\otimes {\bf W}_{\boldsymbol\alpha}{\bf W}_{\boldsymbol\beta}^*
$$
where $\{A_{   (\boldsymbol \alpha, \boldsymbol \beta)}\}_{(\boldsymbol \alpha, \boldsymbol \beta)\in \cJ}$ are some operators on the Hilbert space $\cK$ such that
$$
Tp=\varphi_T({\bf W}, {\bf W}^*)p,\qquad p\in \cP_K.
$$
Let $\{T_{s_1,\ldots, s_k}\}_{(s_1,\ldots, s_k)\in \ZZ^k}$ be  the multi-homogeneous parts of $T$.
Recall from Section 2 that, for every $f\in \cK\otimes \cE_{p_1,\ldots, p_k}$, $(p_1,\ldots, p_k)\in \ZZ^k$,
we have
\begin{equation*}
\begin{split}
T_{s_1,\ldots, s_k}f&=\left(\frac{1}{2\pi}\right)^k
\int_0^{2\pi}\cdots \int_0^{2\pi} e^{-i(s_1+p_1)\theta_1}\cdots e^{-i(s_k+p_k)\theta_k}\left(I_\cK\otimes \Gamma (e^{i {\boldsymbol\theta}})\right) Tf d\theta_1\ldots d\theta_k\\
 &=(I_\cK\otimes{\bf P}_{s_1+p_1,\ldots, s_k+p_k})\varphi_T({\bf W}, {\bf W}^*)f
\end{split}
\end{equation*}
where ${\bf P}_{p_1,\ldots, p_k}\in B(\otimes_{s=1}^k F^2(H_{n_s}))$ is the orthogonal projection  onto the subspace $\cE_{p_1,\ldots, p_k}$. On the other hand,
$$
\varphi_T({\bf W}, {\bf W}^*):=\sum_{(\boldsymbol \alpha, \boldsymbol \beta)\in \cJ}A_{   (\boldsymbol \alpha, \boldsymbol \beta)}\otimes {\bf W}_{\boldsymbol\alpha}{\bf W}_{\boldsymbol\beta}^*=\sum_{t_1\in \ZZ}\cdots \sum_{t_k\in \ZZ}
 q_{t_1,\ldots, t_k}({\bf W}, {\bf W}^*)
$$
where
 \begin{equation*}
 q_{t_1,\ldots, t_k}({\bf W}, {\bf W}^*):=
\sum_{{\alpha_i,\beta_i\in \FF_{n_i}^+, i\in \{1,\ldots, k\}}\atop{|\alpha_i|=t_i^+, |\beta_i|=t_i^-}} A_{(\alpha_1,\ldots,\alpha_k;\beta_1,\ldots, \beta_k)}\otimes {\bf W}_{1,\alpha_1}\cdots {\bf W}_{k,\alpha_k}{\bf W}_{1,\beta_1}^*\cdots {\bf W}_{k,\beta_k}^*.
\end{equation*}
Combining these results and  using  that fact that  $q_{t_1,\ldots, t_k}({\bf W}, {\bf W}^*)f\in \cK\otimes\cE_{t_1+p_1,\ldots t_k+p_k}$  and the subspaces $\{\cE_{p_1,\ldots, p_k}\}_{(p_1,\ldots, p_k)\in \ZZ^k}$ are pairwise orthogonal, we obtain
$$
T_{s_1,\ldots, s_k}f=(I_\cK\otimes{\bf P}_{s_1+p_1,\ldots, s_k+p_k})\sum_{t_1\in \ZZ}\cdots \sum_{t_k\in \ZZ}
 q_{t_1,\ldots, t_k}({\bf W}, {\bf W}^*)f=q_{s_1,\ldots, s_k}({\bf W}, {\bf W}^*)f
$$
for every $f\in \cK\otimes \cE_{p_1,\ldots, p_k}$ and $(p_1,\ldots, p_k)\in \ZZ^k$. Therefore,
$$
T_{s_1,\ldots, s_k}=q_{s_1,\ldots, s_k}({\bf W}, {\bf W}^*), \qquad (s_1,\ldots, s_k)\in \ZZ^k.
$$
Due to Lemma \ref{qBH},  each operator $T_{s_1,\ldots, s_k}$ satisfies  the Brown-Halmos condition \eqref{BH}.
 On the other hand, due to Proposition \ref{homo-decomp},  the operator $T$ can be reconstructed from its multi-homogeneius parts, i. e.
    $$
   Tg=\lim_{N_1\to \infty}\ldots \lim_{N_k\to \infty} \sum_{(s_1,\ldots, s_k)\in \ZZ^k, |s_j|\leq N_j}
   \left(1-\frac{|s_1|}{N_1+1}\right)\cdots \left(1-\frac{|s_k|}{N_k+1}\right) T_{s_1,\ldots, s_k}g
   $$
for every $g\in \cK\otimes \bigotimes_{s=1}^k  F^2(H_{n_s})$, where the limit is in norm.  Since the set of all operators satisfying the Brown-Halmos condition is WOT-closed, we deduce that $T$  satisfies the condition as well.
The proof is complete.
 \end{proof}

If $ (\boldsymbol \alpha, \boldsymbol \beta)\in {\bf F}_{\bf n}^+\times {\bf F}_{\bf n}^+$, we define its length to be
$| (\boldsymbol \alpha, \boldsymbol \beta)|:=|\alpha_1|+\cdots +|\alpha_k|+|\beta_1|+\cdots +|\beta|_k$.

\begin{theorem}   \label{Fourier} Let $\{A_{   (\boldsymbol \alpha, \boldsymbol \beta)}\}_{(\boldsymbol \alpha, \boldsymbol \beta)\in \cJ}$ be  a family of  operators on the Hilbert space $\cK$ and let
$$
\varphi({\bf W}, {\bf W}^*):=\sum_{(\boldsymbol \alpha, \boldsymbol \beta)\in \cJ}A_{   (\boldsymbol \alpha, \boldsymbol \beta)}\otimes {\bf W}_{\boldsymbol\alpha}{\bf W}_{\boldsymbol\beta}^*=\sum_{s_1\in \ZZ}\cdots \sum_{s_k\in \ZZ}
 q_{s_1,\ldots, s_k}({\bf W}, {\bf W}^*)
$$
be a formal series.
Then the following statements are equivalent.
\begin{enumerate}
\item[(i)]   For each $\boldsymbol\gamma\in {\bf F}_{\bf n}^+$, the series
$$
\sum_{\boldsymbol\omega\in {\bf F}_{\bf n}^+: (\boldsymbol\omega,\boldsymbol\gamma)\in \cC}
\tau_{(\boldsymbol\omega,\boldsymbol\gamma)}^2 A_{{\bf s}(\boldsymbol\omega,\boldsymbol\gamma)}^*
A_{{\bf s}(\boldsymbol\omega,\boldsymbol\gamma)}\qquad \text{is {\rm WOT}-convergent}
$$
 and
$$\sup_{r\in [0,1)} \sup_{p\in \cP_\cK, \|p\|\leq 1} \|\varphi(r{\bf W}, r{\bf W}^*)p\|<\infty.$$

\item[(ii)] $\varphi({\bf W}, {\bf W}^*)$ is the formal Fourier representation of a weighted multi-Toeplitz operator  $T$.
\item[(iii)] For each $r\in [0,1)$, the series
$$
\varphi(r{\bf W}, r{\bf W}^*):= \sum_{s_1\in \ZZ}\cdots \sum_{s_k\in \ZZ}
 r^{|s_1|+\cdots+|s_k|}q_{s_1,\ldots, s_k}({\bf W}, {\bf W}^*)
$$
converges in the operator norm topology
and \
$\sup_{r\in [0,1)}   \|\varphi(r{\bf W}, r{\bf W}^*)\|<\infty.$
 \item[(iv)]    There is $M>0$ such that
$$
\|q_{s_1,\ldots, s_k}({\bf W}, {\bf W}^*)\|\leq M,\qquad (s_1,\ldots, s_k)\in \ZZ^k,
$$
$$\sup_{r\in [0,1)}   \|\varphi(r{\bf W}, r{\bf W}^*)\|<\infty.$$
 \end{enumerate}
In this case,
$$
T=\text{\rm SOT-}\lim_{r\to 1} \varphi (r{\bf W}, r{\bf W}^*)\quad \text{and}\quad \|T\|=\sup_{r\in [0,1)}   \|\varphi(r{\bf W}, r{\bf W}^*)\|.
$$
\end{theorem}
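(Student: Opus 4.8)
\emph{Strategy.} The plan is to establish the cycle of implications (ii)$\Rightarrow$(iii)$\Rightarrow$(iv)$\Rightarrow$(i)$\Rightarrow$(ii) and then read off the two closing identities. The workhorses are Theorem \ref{main2} (Brown-Halmos $\Leftrightarrow$ weighted multi-Toeplitz), Theorem \ref{main} (Brown-Halmos $\Leftrightarrow$ SOT-limit of $F(r{\bf W})$ with $F$ a bounded free $k$-pluriharmonic function, and the fact that the Brown-Halmos operators form a WOT-closed linear span, hence a norm- and SOT-closed subspace), Theorem \ref{formal2} together with its convergence relation \eqref{con}, Corollary \ref{norm}, Lemma \ref{qBH}, Lemma \ref{monom1}, Lemma \ref{monom2}, and Proposition \ref{homo-decomp}. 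Throughout I will use the orthogonality of the vectors $\{{\bf W}_{\boldsymbol\alpha}{\bf W}_{\boldsymbol\beta}^* e_{\boldsymbol\gamma}\}_{(\boldsymbol\alpha,\boldsymbol\beta)\in\cJ}$ (Lemma \ref{monom1}(i)) and the identity $\|{\bf W}_{\boldsymbol\alpha}{\bf W}_{\boldsymbol\beta}^* e_{\boldsymbol\gamma}\|^2=\sum_{\boldsymbol\omega:\,{\bf s}(\boldsymbol\omega,\boldsymbol\gamma)=(\boldsymbol\alpha,\boldsymbol\beta)}\tau_{(\boldsymbol\omega,\boldsymbol\gamma)}^2$ computed in the proof of Theorem \ref{formal2}.

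\emph{(ii)$\Rightarrow$(iii)$\Rightarrow$(iv).} If $\varphi=\varphi_T$ for a weighted multi-Toeplitz $T$, then Theorem \ref{formal2} yields the WOT-convergence of the series in relation \eqref{con}; by Theorem \ref{main2}, $T$ satisfies the Brown-Halmos condition, so Theorem \ref{main} (and the identification $T_{s_1,\ldots,s_k}=q_{s_1,\ldots,s_k}({\bf W},{\bf W}^*)$ from the proof of Theorem \ref{main2}) gives $T=\text{\rm SOT-}\lim_{r\to1}F(r{\bf W})$ with $F(r{\bf W})=\varphi(r{\bf W},r{\bf W}^*)=\sum_{s}r^{|s_1|+\cdots+|s_k|}q_{s_1,\ldots,s_k}({\bf W},{\bf W}^*)$ convergent in the operator norm, while Corollary \ref{norm} gives $\sup_{r}\|F(r{\bf W})\|=\|T\|<\infty$; this is (iii). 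Assuming (iii), the operator $q_{s_1,\ldots,s_k}(r{\bf W},r{\bf W}^*)$ is the $(s_1,\ldots,s_k)$-multi-homogeneous part of $\varphi(r{\bf W},r{\bf W}^*)$ (project onto the spectral subspaces $\cE_{p_1,\ldots,p_k}$, as in Definition \ref{mhp}), so the averaging estimate used in the proof of Theorem \ref{main} gives $\|q_{s_1,\ldots,s_k}(r{\bf W},r{\bf W}^*)\|\le\|\varphi(r{\bf W},r{\bf W}^*)\|$; since $q_{s_1,\ldots,s_k}(r{\bf W},r{\bf W}^*)=r^{|s_1|+\cdots+|s_k|}q_{s_1,\ldots,s_k}({\bf W},{\bf W}^*)$, letting $r\to1$ bounds $\|q_{s_1,\ldots,s_k}({\bf W},{\bf W}^*)\|$ uniformly, which is (iv).

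\emph{(iv)$\Rightarrow$(i) and (i)$\Rightarrow$(ii).} Under (iv) the bound $\|q_{s_1,\ldots,s_k}({\bf W},{\bf W}^*)\|\le M$ and $\sum_{s\in\ZZ^k}r^{|s_1|+\cdots+|s_k|}<\infty$ for $r<1$ make $\sum_s r^{|s_1|+\cdots+|s_k|}q_{s_1,\ldots,s_k}({\bf W},{\bf W}^*)$ norm-convergent to a bounded operator $T_r:=\varphi(r{\bf W},r{\bf W}^*)$ with $\sup_r\|T_r\|<\infty$; this is the second half of (i) (and $\sup_{p\in\cP_\cK,\|p\|\le1}\|T_rp\|=\|T_r\|$). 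For the first half, fix $\boldsymbol\gamma$ and $x\in\cK$; by orthogonality, $\|T_r(x\otimes e_{\boldsymbol\gamma})\|^2=\sum_{(\boldsymbol\alpha,\boldsymbol\beta)\in\cJ}r^{2|(\boldsymbol\alpha,\boldsymbol\beta)|}\|A_{(\boldsymbol\alpha,\boldsymbol\beta)}x\|^2\|{\bf W}_{\boldsymbol\alpha}{\bf W}_{\boldsymbol\beta}^* e_{\boldsymbol\gamma}\|^2\le\|T_r\|^2\|x\|^2$, and letting $r\to1$ by monotone convergence and regrouping via ${\bf s}(\boldsymbol\omega,\boldsymbol\gamma)=(\boldsymbol\alpha,\boldsymbol\beta)$ gives $\big\langle\big(\sum_{\boldsymbol\omega:(\boldsymbol\omega,\boldsymbol\gamma)\in\cC}\tau_{(\boldsymbol\omega,\boldsymbol\gamma)}^2 A_{{\bf s}(\boldsymbol\omega,\boldsymbol\gamma)}^*A_{{\bf s}(\boldsymbol\omega,\boldsymbol\gamma)}\big)x,x\big\rangle\le(\sup_r\|T_r\|)^2\|x\|^2$; the increasing net of positive partial sums is bounded, so it converges in the strong, hence weak, operator topology, giving (i). Conversely, assume (i). As in the proof of Theorem \ref{formal2}, \eqref{con} makes $\varphi({\bf W},{\bf W}^*)(x\otimes e_{\boldsymbol\gamma})$, and hence $T_r(x\otimes e_{\boldsymbol\gamma})$, convergent vectors for $p\in\cP_\cK$; the same orthogonality plus a dominated-convergence argument in $r$ (dominating summand $\|A_{(\boldsymbol\alpha,\boldsymbol\beta)}x\|^2\|{\bf W}_{\boldsymbol\alpha}{\bf W}_{\boldsymbol\beta}^* e_{\boldsymbol\gamma}\|^2$, summable by \eqref{con}) gives $\|T_rp-\varphi({\bf W},{\bf W}^*)p\|\to0$ for $p\in\cP_\cK$. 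The uniform bound in (i) forces $\varphi({\bf W},{\bf W}^*)$ to extend boundedly on $\cP_\cK$ and $\sup_r\|T_r\|<\infty$, so $T_r$ converges in the strong operator topology to a bounded operator $T$ with $Tp=\varphi({\bf W},{\bf W}^*)p$ on $\cP_\cK$. Each finite partial sum of $\sum_s r^{|s_1|+\cdots+|s_k|}q_{s_1,\ldots,s_k}({\bf W},{\bf W}^*)$ satisfies the Brown-Halmos condition by Lemma \ref{qBH}; since by Theorem \ref{main} the Brown-Halmos operators form a WOT-closed (hence norm- and SOT-closed) linear subspace, $T_r$ and then $T$ satisfy it. By Theorem \ref{main2}, $T$ is weighted multi-Toeplitz, and by the uniqueness part of Theorem \ref{formal2} its Fourier representation agrees with $\varphi$, so $\varphi=\varphi_T$, which is (ii).

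\emph{Closing identities and the hard point.} Once (ii) holds, applying Theorem \ref{main} to $T$ gives $T=\text{\rm SOT-}\lim_{r\to1}F(r{\bf W})=\text{\rm SOT-}\lim_{r\to1}\varphi(r{\bf W},r{\bf W}^*)$, and Corollary \ref{norm} gives $\|T\|=\sup_{r\in[0,1)}\|F(r{\bf W})\|=\sup_{r\in[0,1)}\|\varphi(r{\bf W},r{\bf W}^*)\|$. I expect the main obstacle to be the implication (i)$\Rightarrow$(ii): converting the two ``soft'' hypotheses of (i) into a genuine bounded operator requires carefully chaining the orthogonality of Lemma \ref{monom1}, monotone and dominated convergence in the dilation parameter $r$, and density of $\cP_\cK$, and the final identification then leans on the already-established closedness of the Brown-Halmos class. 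A secondary technical point to watch in (iv)$\Rightarrow$(i) is that $\varphi(r{\bf W},r{\bf W}^*)$ is well defined only because $\sum_{s\in\ZZ^k}r^{2(|s_1|+\cdots+|s_k|)}<\infty$ for $r<1$, even though the homogeneous pieces are a priori only uniformly bounded.
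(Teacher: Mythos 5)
Your cycle (ii)$\Rightarrow$(iii)$\Rightarrow$(iv)$\Rightarrow$(i)$\Rightarrow$(ii) differs mildly from the paper's organization ((i)$\Rightarrow$(ii); (ii)$\Rightarrow$(iii) and (iv); (iv)$\Rightarrow$(iii) trivially; (iii)$\Rightarrow$(i)), but the steps (ii)$\Rightarrow$(iii), (iii)$\Rightarrow$(iv) and (iv)$\Rightarrow$(i) are all sound: the averaging estimate $\|q_{s_1,\ldots,s_k}(r{\bf W},r{\bf W}^*)\|\le\|\varphi(r{\bf W},r{\bf W}^*)\|$ followed by $r\to 1$ is a legitimate substitute for the paper's derivation of the uniform bound from $\|T_{s_1,\ldots,s_k}\|\le\|T\|$, and your (iv)$\Rightarrow$(i) is essentially the paper's (iii)$\Rightarrow$(i) computation (orthogonality, monotone convergence in $r$, bounded increasing net of positive partial sums).

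The one genuine gap is in (i)$\Rightarrow$(ii), at the sentence claiming that $T_r$ satisfies the Brown--Halmos condition because the finite partial sums of $\sum_s r^{|s_1|+\cdots+|s_k|}q_{s_1,\ldots,s_k}({\bf W},{\bf W}^*)$ do and the Brown--Halmos class is WOT-closed. To transfer the property to $T_r$ you need those partial sums to converge to $T_r$ in WOT (or some operator topology), whereas hypothesis (i) only gives norm convergence of the orthogonal vector series $\sum r^{|(\boldsymbol\alpha,\boldsymbol\beta)|}A_{(\boldsymbol\alpha,\boldsymbol\beta)}x\otimes{\bf W}_{\boldsymbol\alpha}{\bf W}_{\boldsymbol\beta}^*e_{\boldsymbol\gamma}$ for $p\in\cP_\cK$; pointwise convergence on a dense subspace does not yield WOT convergence without a uniform bound on the partial sums, and no such bound is established --- indeed, under (i) it is not even immediate that each $q_{s_1,\ldots,s_k}({\bf W},{\bf W}^*)$ is a bounded operator. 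The gap is repairable: identifying $q_{s_1,\ldots,s_k}({\bf W},{\bf W}^*)$ with $(r')^{-(|s_1|+\cdots+|s_k|)}(T_{r'})_{s_1,\ldots,s_k}$ for a fixed $r'<1$ gives $\|q_{s_1,\ldots,s_k}({\bf W},{\bf W}^*)\|\le (r')^{-(|s_1|+\cdots+|s_k|)}\sup_\rho\|T_\rho\|$, whence $\sum_s r^{|s_1|+\cdots+|s_k|}\|q_{s_1,\ldots,s_k}({\bf W},{\bf W}^*)\|<\infty$ for $r<r'$ and the series converges to $T_r$ in operator norm, so the Brown--Halmos property does pass to the limit. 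The paper sidesteps all of this: from the explicit formula for $T_r(x\otimes e_{\boldsymbol\gamma})$ it reads off $\left<T_r(x\otimes e_{\boldsymbol\gamma}),y\otimes e_{\boldsymbol\omega}\right>=\tau_{(\boldsymbol\omega,\boldsymbol\gamma)}\,r^{|{\bf s}(\boldsymbol\omega,\boldsymbol\gamma)|}\left<A_{{\bf s}(\boldsymbol\omega,\boldsymbol\gamma)}x,y\right>$ (and $0$ off $\cC$), so $T_r$ is weighted multi-Toeplitz directly from Definition \ref{MT}, and then Lemma \ref{monom2}(ii) applied to the bounded SOT-convergent family $T_r\to T$ yields (ii) with no appeal to the Brown--Halmos machinery. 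Either that direct verification or the norm-convergence repair should be inserted; the rest of your argument, including the closing identities via Theorem \ref{main} and Corollary \ref{norm}, matches the paper.
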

\begin{proof}
Assume that item (i) holds. As in the proof of Theorem \ref{formal2}, it is easy to see that, for every $x\in\cK$,
$$
 \sum_{\boldsymbol\omega\in \cF: (\boldsymbol\omega, \boldsymbol\gamma)\in \cC}
 \tau_{(\boldsymbol\omega,\boldsymbol\gamma)}A_{{\bf s}(\boldsymbol \omega, \boldsymbol\gamma)}x \otimes e_{\boldsymbol\omega }
$$
is a vector in $\cK\otimes \bigotimes_{i=1}^k F^2(H_{n_i})$ and, therefore, so are  $ \varphi ({\bf W}, {\bf W}^*)p$ and $ \varphi (r{\bf W}, r{\bf W}^*)p$ for every $p\in \cP_\cK$, $r\in [0,1)$.
Hence, we deduce that
\begin{equation}
\label{frr}
 \lim_{r\to 1}\varphi (r{\bf W}, r{\bf W}^*)p= \varphi ({\bf W}, {\bf W}^*)p,\qquad p\in \cP_\cK.
\end{equation}
Using the fact that, for each $r\in [0,1)$,
 \begin{equation*}
 \sup_{p\in\cP_\cK, \|p\|\leq 1}\|\varphi (r{\bf W}, r{\bf W}^*)p\|<\infty,
\end{equation*}
we conclude that there is a bounded linear operator $T_r\in B(\cK\otimes \bigotimes_{i=1}^k F^2(H_{n_i}))$ such that
\begin{equation}
\label{Trf}
  T_r p=\varphi (r{\bf W}, r{\bf W}^*)p,\qquad p\in \cP_\cK.
\end{equation}
Note that, for every $ \boldsymbol\omega, \boldsymbol\gamma \in {\bf F}_{n}^+$ and $x,y\in \cK$,
\begin{equation*}
\begin{split}
\left<T_r(x\otimes e_{\boldsymbol\gamma }), y\otimes e_{\boldsymbol\omega} \right>
&=
\left<\varphi (r{\bf W}, r{\bf W}^*)(x\otimes e_{\boldsymbol\gamma }), y\otimes e_{\boldsymbol\omega} \right>\\
&=\begin{cases}
\tau_{(\boldsymbol\omega,\boldsymbol\gamma)}\left<r^{|{\bf s}(\boldsymbol \omega, \boldsymbol\gamma)|}A_{{\bf s}(\boldsymbol \omega, \boldsymbol\gamma)}x,y\right>, & \text{ if  } (\boldsymbol \omega, \boldsymbol\gamma)\in \cC,\\
0,&  \text{ if  } (\boldsymbol \omega, \boldsymbol\gamma)\in ({\bf F}_{\bf n }^+\times {\bf F}_{\bf n }^+)\backslash \cC.
\end{cases}
\end{split}
\end{equation*}
Consequently, $T_r$ is a weighted multi-Toeplitz operator.
Now, note that due to relation \eqref{frr} and the fact that  $\sup_{ r\in[0,1)} \sup_{p\in\cP_\cK, \|p\|\leq 1}\|\varphi (r{\bf W}, r{\bf W}^*)p\|<\infty$, we deduce that
$$\sup_{p\in \cP_\cK, \|p\|\leq 1}\|\varphi ({\bf W}, {\bf W}^*)p\|<\infty.
$$
Consequently,  there is a bounded linear operator $T$ on
$\cK\otimes \bigotimes_{i=1}^k F^2(H_{n_i})$ such that $Tp=
\varphi ({\bf W}, {\bf W}^*)p $ for every $p\in \cP_\cK$.
Now, it is clear that
$$
\lim_{r\to 1}T_r p=\lim_{r\to 1} \varphi (r{\bf W}, r{\bf W}^*)p=
\varphi ({\bf W}, {\bf W}^*)p=Tp
$$
for every $p\in \cP_\cK$ and, due to item (i), $\sup_{r\in [0,1)}\|T_r\|<\infty$. This implies $T=\text{\rm SOT-}\lim_{r\to 1}T_r$.  Since $T_r$ is a weighted multi-Toeplitz operator, we can use Lemma \ref{monom2} and relation \eqref{Trf}, to deduce that  so is $T$.
Since $Tp=\varphi ({\bf W}, {\bf W}^*)p$ for every $p\in \cP_\cK$, Theorem \ref{formal2} shows that
$ \varphi ({\bf W}, {\bf W}^*)$ is the formal Fourier representation of $T$. Therefore, item (ii) holds.

Now, we prove that (ii)$\implies$(iii) and (ii)$\implies$(iv). Assume that $\varphi({\bf W}, {\bf W}^*)$ is the formal Fourier representation of a weighted multi-Toeplitz operator  $T$.
Due to Theorem \ref{main2} (see also its proof), $T$ satisfies the Brown-Halmos condition \eqref{BH} and the multi-homogeneous parts of $T$ are $T_{s_1,\ldots, s_k}=q_{s_1,\ldots, s_k}({\bf W}, {\bf W}^*)$ for every $(s_1,\ldots, s_k)\in \ZZ^+$.
As we saw in the proof of Theorem \ref{main}, we have
$\|T_{s_1,\ldots, s_k}\|\leq \|T\|$ for every $(s_1,\ldots, s_k)\in \ZZ^+$ and, as a consequence, the series
$\sum_{(s_1,\ldots, s_k)\in \ZZ^+}r^{|s_1|+\cdots +|s_k|} T_{s_1,\ldots, s_k}$ is convergent in the operator norm topology.
Moreover, according to inequality \eqref{VN}, we have
\begin{equation*}
\left\|\sum_{(s_1,\ldots, s_k)\in \ZZ^k}q_{s_1,\ldots, s_k}(r{\bf W}, r{\bf W}^*)\right\|\leq \|T\|,\qquad r\in[0,1),
\end{equation*}
which implies
$\sup_{r\in [0,1)}\|\varphi (r{\bf W}, r{\bf W}^*)p\|<\infty$.
Therefore, items (iii) and (iv) hold. Moreover, in the proof of Theorem \ref{main}, we also proved that
$
T=\text{\rm SOT-}\lim_{r\to 1} \varphi (r{\bf W}, r{\bf W}^*).
$
On the other hand, Corollary \ref{norm} shows that  $\|T\|=\sup_{r\in [0,1)}\|\varphi (r{\bf W}, r{\bf W}^*)\|$.

Since the implication (iv)$\implies$(iii) is obvious, it remains to prove that (iii)$\implies$(i). To this end, assume that item (iii) holds. Then, for each $\gamma\in {\bf F}_{\bf n}^+$ and $x\in \cK$,
$\sup_{r\in [0,1)}\|\varphi (r{\bf W}, r{\bf W}^*)(x\otimes e_\gamma)\|<\infty$. Since
$$
\varphi (r{\bf W}, r{\bf W}^*)(x\otimes e_\gamma)
=
\sum_{\boldsymbol\omega\in {\bf F}_{\bf n}^+: (\boldsymbol\omega,\boldsymbol\gamma)\in \cC} r^{2|{\bf s}(\boldsymbol \omega, \boldsymbol\gamma)|}
\tau_{(\boldsymbol\omega,\boldsymbol\gamma)}^2\|
\|A_{{\bf s}(\boldsymbol\omega,\boldsymbol\gamma)}x\|^2,
$$
we deduce that
$$
\sum_{\boldsymbol\omega\in {\bf F}_{\bf n}^+: (\boldsymbol\omega,\boldsymbol\gamma)\in \cC}
\tau_{(\boldsymbol\omega,\boldsymbol\gamma)}^2 A_{{\bf s}(\boldsymbol\omega,\boldsymbol\gamma)}^*
A_{{\bf s}(\boldsymbol\omega,\boldsymbol\gamma)}\qquad \text{is {\rm WOT}-convergent}.
$$
Due to the fact that
$$\sup_{r\in [0,1)} \sup_{p\in \cP_\cK, \|p\|\leq 1} \|\varphi(r{\bf W}, r{\bf W}^*)p\|
=\sup_{r\in [0,1)}\|\varphi (r{\bf W}, r{\bf W}^*)\|
<\infty,$$
item (i) holds.
The proof is complete.
\end{proof}

For each $i\in \{1,\ldots, k\}$, let $F_{n_i,m_i}^2$ be the Hilbert space of formal power series in noncommutative indeterminates $Z_{i,1},\ldots, Z_{i,n_i}$ with complete orthogonal basis $\{Z_{i,\alpha}: \ \alpha \in \FF_{n_i}^+\}$  such that $\|Z_{i,\alpha}\|_{i,m_i}:=\frac{1}{\sqrt{b_{i,\alpha}^{(m_i)}}}$.  It is clear that
$$
F_{n_i,m_i}^2=\left\{ \varphi:=\sum_{\alpha\in \FF_{n_i}^+} a_\alpha Z_{i,\alpha}: \ a_\alpha\in \CC \ \text{\rm and }\  \|\varphi\|_{i,m_i}^2:=
\sum_{\alpha\in \FF_{n_i}^+} \frac{1}{b_{i,\alpha}^{(m_i)}} |a_\alpha|^2<\infty\right\},
$$
which can be seen as a weighted Fock space with $n_i$ generators.
The left multiplication operators $L_{i,1} ,\ldots, L_{i,n_i} $ are defined by
$L_{i ,j}\xi:=Z_{i,j}\xi$ \, for all $\xi\in F^2_{i,m_i}$.
 For each $i\in \{1,\ldots, k\}$ and $j\in \{1,\ldots, n_i\}$, we
define the operator ${\bf L}_{i,j}$ acting on the tensor Hilbert space
$F^2_{n_1,m_1}\otimes\cdots\otimes F^2_{n_k,m_k}$ by setting
$${\bf L}_{i,j}:=\underbrace{I\otimes\cdots\otimes I}_{\text{${i-1}$
times}}\otimes L_{i,j}\otimes \underbrace{I\otimes\cdots\otimes
I}_{\text{${k-i}$ times}}.
$$

 Note that the operator
$U_{i,m_i}:F^2(H_{n_i})\to F^2_{n_i,m_i}$  defined by
$
U_{i,m_i}(e^i_\alpha):=\sqrt{b_\alpha^{(m_i)}} Z_{i,\alpha}$, $ \alpha\in \FF_{n_i}^+,
$
is unitary and
   $U_{i,m_i}W_{i,j}=L_{i,j} U_{i,m_i}$  for every $ j\in \{1,\ldots, n_i\}.
$
Consequently, the operator  ${\bf U}:=U_{1,m_1}\otimes\cdots \otimes U_{k,m_k}:\otimes_{i=1}^k F^2(H_{n_i})\to\otimes_{i=1}^k F^2_{n_i,m_i}$
is unitary and
   ${\bf U}{\bf W}_{i,j}={\bf L}_{i,j} {\bf U}$  for every  $i\in \{1,\ldots, k\}$ and $ j\in \{1,\ldots, n_i\}.
$
A straightforward calculation reveals that $T\in B(\cK\bigotimes  \otimes_{i=1}^k F^2(H_{n_i}))$  is  a  weighted multi-Toeplitz operator if and only if  there exist operators $\{A_{(\boldsymbol \sigma; \boldsymbol \beta)}\}_{(\boldsymbol \sigma; \boldsymbol \beta)\in \cJ}\subset B(\cK)$ such that the operator $T':={\bf U}T{\bf U}^*$   satisfies the relation
$$
\left<T'(x\otimes {\bf Z}_{\boldsymbol\gamma }), y\otimes {\bf Z}_{\boldsymbol\omega} \right>
=\begin{cases}
\mu_{(\boldsymbol\omega,\boldsymbol\gamma)}\left<A_{{\bf s}(\boldsymbol \omega, \boldsymbol\gamma)}x,y\right>, & \text{ if  } (\boldsymbol \omega, \boldsymbol\gamma)\in \cC,\\
0,&  \text{ if  } (\boldsymbol \omega, \boldsymbol\gamma)\in ({\bf F}_{\bf n}^+\times {\bf F}_{\bf n}^+)\backslash \cC,

\end{cases}
$$
for every $ \boldsymbol\omega, \boldsymbol\gamma \in {\bf F}_{\bf n}^+$,
where the weights $ \{\mu_{(\boldsymbol\omega,\boldsymbol\gamma)}\}_{(\boldsymbol \omega, \boldsymbol\gamma)\in \cC}$ are given by
$$ \mu_{(\boldsymbol\omega,\boldsymbol\gamma)}:=
 \prod_{i=1}^k  \frac{1}{b^{(m_i)}_{i,\max\{\omega_i, \gamma_i\}}}.
 $$
We should mention that all the results of  our paper can be written in the setting of multi-Toeplitz operators on tensor products of weighted Fock spaces.

 In the particular case when $k=1$ and $n_1=1$, the space $F^2_{1,m_1}$  coincides with the weighted Bergman space $A_{m_1}(\DD)$. The results of this section imply the fact that $T'$ is a  Toeplitz operator with operator-valued bounded harmonic symbol on $\DD$  if and only if it satisfies the Brown-Halmos equation where the weighted right creation operators $\Lambda_{i,j}$ are replaced by the right creation operators $R_{i,j}$ acting on the weighted Fock space $F^2_{i,n_i}$ by  $R_{i,j}\xi:=\xi Z_{i,j}$, $j\in \{1,\ldots, n_i\}$.   In the scalar case when $\cK=\CC$,   we recover the corresponding result obtained by Louhichi and Olofsson   in  \cite{LO}.

 We remark that, when $n_i=m_i=1$ for $i\in\{1,\ldots, k\}$, the tensor product  $F^2_{1,1}\otimes \cdots \otimes F^2_{1,1}$ is identified with the Hardy space $H^2(\DD^k)$ and the Brown-Halmos condition becomes $M_{z_i }^*T'M_{z_i}=T'$ for every $i\in \{1,\ldots, k\}$.
 In this case, $T'$ is a multi-Toeplitz operator if and only if $T'=P_{H^2(\DD^k)} M_\varphi |_{H^2(\DD^k)}$ for some
 $\varphi\in L^\infty(\TT^k)$.  We should mention  that the Brown-Halmos type characterization of Toeplitz operators on $H^2(\DD^k)$ was  recently obtained in \cite{MSS}.

 In the particular  case when $n_i=1$  for $i\in \{1,\ldots, k\}$ and and ${\bf m}=(m_1,\ldots, m_k)\in \NN^k$, the tensor product $F^2_{1,m_1}\otimes \cdots \otimes F^2_{1,m_k}$ is identified with the  reproducing kernel Hilbert space with    reproducing kernel
 $$
 \kappa_{\bf m}(z,w):=\prod_{i=1}^k \frac{1}{(1-\bar z_i w_i)^{m_i}}, \qquad z=(z_1,\ldots, z_k), w=(w_1,\ldots, w_k) \in \DD^k.
 $$
 In this case, the standard orthonormal basis is
 $$
 \left\{\sqrt{\prod_{i=1}^k\left(\begin{matrix} s_i+m_i-1\\s_i
\end{matrix}\right)} z_1^{s_1}\cdots z_k^{s_k}:\  (s_1,\ldots, s_k)\in \{0, 1, \ldots\}\right\}.
$$
 All the results of the present paper hold,  in particular,  for these reproducing kernel Hilbert spaces, which
 include the  Hardy space, the Bergman space, and the weighted Bergman space  over the polydisk.

\bigskip

\section{Bounded free $k$-pluriharmonic functions}

In this section,  we prove that the   bounded free $k$-pluriharmonic  functions on the radial poly-hyperball
  are precisely those  that are noncommutative Berezin transforms of the weighted multi-Toeplitz operators. In this setting, we solve the Dirichlet extension problem.

Denote by  ${\bf PH}_\cK^\infty({\bf D}_{{\bf n},rad}^{\bf m})$  the set of all bounded free
 $k$-pluriharmonic functions on the radial  poly-hyperball ${\bf D}_{{\bf n},rad}^{\bf m}$ with coefficients in
$B(\cK)$.
We define the norms $\|\cdot
\|_m:M_m\left({\bf PH}_\cE^\infty({\bf D}_{{\bf n},rad}^{\bf m})\right)\to [0,\infty)$, $m\in \NN$,  by
setting
$$
\|[F_{ij}]_m\|_m:= \sup \|[F_{ij}({\bf X})]_m\|,
$$
where the supremum is taken over all elements ${\bf X}\in {\bf D}_{{\bf n},rad}^{\bf m}(\cH)$ and any Hilbert space $\cH$. It is easy to see that the norms
$\|\cdot\|_m$, $m\in\NN$, determine  an operator space
structure  on ${\bf PH}_\cK^\infty({\bf D}_{{\bf n},rad}^{\bf m})$,
 in the sense of Ruan (see e.g. \cite{ER}).

The   {\it extended noncommutative Berezin transform at} ${\bf X}\in {\bf D}_{{\bf n},rad}^{\bf m}(\cH)$
 is the map
 $$\widetilde{\bf B}_{\bf X}: B(\cK \otimes  \bigotimes_{i=1}F^2(H_{n_i}))\to B(\cK)\otimes_{min} B(\cH)
 $$
 defined by
 \begin{equation*}
 \widetilde{\bf B}_{\bf X}[g]:= \left(I_\cK\otimes {\bf K}_{\bf X} ^*\right) (g\otimes I_\cH)\left(I_\cK\otimes{\bf K}_{\bf X}\right),
 \quad g\in B(\cK\otimes \bigotimes_{i=1}^k F^2(H_{n_i})),
 \end{equation*}
where  ${\bf K}_{\bf X}:\cH \to \left(\otimes_{i=1}^kF^2(H_{n_i})\right)\otimes
\cH$  is noncommutative Berezin kernel associated with ${\bf X}\in {\bf D}_{{\bf n},rad}^{\bf m}(\cH)$.

Throughout this section we assume that $\cH$ is a separable infinitely dimensional Hilbert space. Consequently, one can identify any free $k$-pluriharmonic function with its representation on $\cH$.
Let  $\boldsymbol{\cT }$ be the set of all  of all weighted multi-Toeplitz operators  on $\cK\otimes\bigotimes_{i=1}^k F^2(H_{n_i})$.
The main result of this section is the following characterization of
bounded  free $k$-pluriharmonic  functions on ${\bf D_n^m}$.

 \begin{theorem}\label{bounded}
  If $F: {\bf D}_{{\bf n},rad}^{\bf m}(\cH)\to B(\cK)\otimes_{min}B(\cH)$, then the following statements are equivalent.
\begin{enumerate}
\item[(i)] $F$ is a bounded free $k$-pluriharmonic function.
\item[(ii)]
There exists $T\in \boldsymbol{\cT }$ such
that
$$F({\bf X})=  \widetilde{\bf B}_{\bf X}[T], \qquad
{\bf X}\in {\bf D}_{{\bf n},rad}^{\bf m}(\cH).
$$
\end{enumerate}
In this case,
  $T=\text{\rm SOT-}\lim\limits_{r\to 1}F(r{\bf W}).
  $
   Moreover, the map
$$
\Phi:{\bf PH}_\cK^\infty({\bf D}_{{\bf n},rad}^{\bf m})\to \boldsymbol{\cT}\quad
\text{ defined by } \quad \Phi(F):=T
$$ is a completely   isometric isomorphism of operator spaces.
\end{theorem}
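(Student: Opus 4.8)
The plan is to derive the whole statement from Theorems~\ref{main}, \ref{main2}, \ref{formal2}, \ref{Fourier} and Corollary~\ref{norm}; the one genuinely new ingredient is the evaluation of the extended noncommutative Berezin transform of a weighted multi-Toeplitz operator at an arbitrary point of the radial poly-hyperball. So the first thing I would prove is: \emph{for $T\in\boldsymbol{\cT}$, if $\varphi_T({\bf W},{\bf W}^{*})=\sum_{(\boldsymbol\alpha,\boldsymbol\beta)\in\cJ}A_{(\boldsymbol\alpha,\boldsymbol\beta)}\otimes{\bf W}_{\boldsymbol\alpha}{\bf W}_{\boldsymbol\beta}^{*}=\sum_{(s_1,\ldots,s_k)\in\ZZ^k}q_{s_1,\ldots,s_k}({\bf W},{\bf W}^{*})$ is its formal Fourier representation (Theorem~\ref{formal2}) and $F_T({\bf X}):=\sum_{(s_1,\ldots,s_k)\in\ZZ^k}q_{s_1,\ldots,s_k}({\bf X},{\bf X}^{*})$, then $F_T$ is a bounded free $k$-pluriharmonic function on ${\bf D}_{{\bf n},rad}^{\bf m}$ (Theorems~\ref{main2},~\ref{main}) and $\widetilde{\bf B}_{\bf X}[T]=F_T({\bf X})$ for every ${\bf X}\in{\bf D}_{{\bf n},rad}^{\bf m}(\cH)$.} To get the last identity, write ${\bf X}=r{\bf Y}$ with $r\in[0,1)$ and ${\bf Y}\in{\bf D_n^m}(\cH)$; membership of ${\bf Y}$ forces $\Phi_{Y_i}(I)\le I$, hence $\|\Phi_{Y_i}^{p}(I)\|\le1$ by complete positivity, hence $\Phi_{X_i}^{p}(I)=r^{2p}\Phi_{Y_i}^{p}(I)\to0$, so ${\bf X}$ is pure and ${\bf K}_{\bf X}^{*}{\bf K}_{\bf X}=I$. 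Then the computation establishing~\eqref{KK}, carried out with ${\bf K}_{\bf X}$ in place of ${\bf K}_{r{\bf W}}$ — expanding ${\bf K}_{\bf X}h$ through its defining series, inserting the multi-Toeplitz formula of Definition~\ref{MT} for $\langle T(x\otimes e_{\boldsymbol\beta}),y\otimes e_{\boldsymbol\alpha}\rangle$, collapsing the weights via $b_{\boldsymbol\alpha}^{({\bf m})}b_{\boldsymbol\beta}^{({\bf m})}\tau_{(\boldsymbol\alpha,\boldsymbol\beta)}=(b_{\min(\boldsymbol\alpha,\boldsymbol\beta)}^{({\bf m})})^{2}$, and summing out the "overlap" part by the purity identity ${\bf K}_{\bf X}^{*}{\bf K}_{\bf X}=I$ — yields $\langle\widetilde{\bf B}_{\bf X}[T](x\otimes h),y\otimes h'\rangle=\sum_{(\boldsymbol\sigma,\boldsymbol\delta)\in\cJ}\langle A_{(\boldsymbol\sigma,\boldsymbol\delta)}x,y\rangle\langle X_{\boldsymbol\sigma}X_{\boldsymbol\delta}^{*}h,h'\rangle=\langle F_T({\bf X})(x\otimes h),y\otimes h'\rangle$. (For ${\bf X}=r{\bf W}$ this is~\eqref{KK} itself.) By Theorem~\ref{main} one also has $T=\text{\rm SOT-}\lim_{r\to1}F_T(r{\bf W})$ and, by Corollary~\ref{norm}, $\|T\|=\sup_{r\in[0,1)}\|F_T(r{\bf W})\|$.

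Granting this, the equivalence and the limit formula follow at once. For (ii)$\Rightarrow$(i): if $T\in\boldsymbol{\cT}$, then $F:={\bf X}\mapsto\widetilde{\bf B}_{\bf X}[T]=F_T({\bf X})$ is a bounded free $k$-pluriharmonic function and $T=\text{\rm SOT-}\lim_{r\to1}F(r{\bf W})$. For (i)$\Rightarrow$(ii): a bounded free $k$-pluriharmonic $F$, written as $F({\bf X})=\sum q_{s_1,\ldots,s_k}({\bf X},{\bf X}^{*})$, gives the formal series $\sum q_{s_1,\ldots,s_k}({\bf W},{\bf W}^{*})$, which satisfies condition~(iii) of Theorem~\ref{Fourier}: $\sum q_{s_1,\ldots,s_k}(r{\bf W},r{\bf W}^{*})=F(r{\bf W})$ converges in operator norm (since $r{\bf W}$ lies in the radial part) and $\sup_{r\in[0,1)}\|F(r{\bf W})\|\le\|F\|<\infty$. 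Hence Theorem~\ref{Fourier} produces $T\in\boldsymbol{\cT}$ with this formal Fourier representation, for which $T=\text{\rm SOT-}\lim_{r\to1}F(r{\bf W})$ and $\|T\|=\sup_{r\in[0,1)}\|F(r{\bf W})\|$; since then $F=F_T$, the first paragraph gives $F({\bf X})=\widetilde{\bf B}_{\bf X}[T]$. In both directions $T=\text{\rm SOT-}\lim_{r\to1}F(r{\bf W})$, which is the asserted formula.

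For the map $\Phi$: put $\Phi(F):=T=\text{\rm SOT-}\lim_{r\to1}F(r{\bf W})$, a well-defined linear map ${\bf PH}_\cK^\infty({\bf D}_{{\bf n},rad}^{\bf m})\to\boldsymbol{\cT}$ by the above. It is injective, since $\Phi(F_1)=\Phi(F_2)=T$ forces $F_1({\bf X})=\widetilde{\bf B}_{\bf X}[T]=F_2({\bf X})$ for all ${\bf X}$, and surjective, since for $T\in\boldsymbol{\cT}$ one has $F_T\in{\bf PH}_\cK^\infty({\bf D}_{{\bf n},rad}^{\bf m})$ with $\Phi(F_T)=T$. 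By Corollary~\ref{norm}, $\|\Phi(F)\|=\|T\|=\sup_{r\in[0,1)}\|F(r{\bf W})\|\le\|F\|$; conversely, for any radial ${\bf X}$ the contractivity of ${\bf K}_{\bf X}$ gives $\|F({\bf X})\|=\|\widetilde{\bf B}_{\bf X}[T]\|\le\|T\|$, whence $\|F\|\le\|T\|$; so $\|\Phi(F)\|=\|F\|$. Replacing $\cK$ by $\cK^{(m)}$ — identifying $M_m({\bf PH}_\cK^\infty({\bf D}_{{\bf n},rad}^{\bf m}))$ with the bounded free $k$-pluriharmonic functions having coefficients in $M_m(B(\cK))$, $M_m(\boldsymbol{\cT})$ with the weighted multi-Toeplitz operators having such coefficients, noting $\Phi_m([F_{ij}])=[\Phi(F_{ij})]=\text{\rm SOT-}\lim_{r\to1}[F_{ij}(r{\bf W})]$, and using that $\widetilde{\bf B}_{\bf X}$ acts entrywise on matrices — the same estimate yields $\|[\Phi(F_{ij})]\|=\|[F_{ij}]\|_m$ for every $m$. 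Hence $\Phi$ is a completely isometric isomorphism of operator spaces.

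The main obstacle is the identity $\widetilde{\bf B}_{\bf X}[T]=F_T({\bf X})$ of the first paragraph, i.e. propagating~\eqref{KK} from $r{\bf W}$ to every point of the radial poly-hyperball. It rests on the purity of radial elements — which makes ${\bf K}_{\bf X}$ an isometry, so that the weight bookkeeping and the purity identity ${\bf K}_{\bf X}^{*}{\bf K}_{\bf X}=I$ force the Berezin transform of $T$ to reassemble exactly into $F_T({\bf X})$ — and on the norm convergence of the series defining $F_T$ at radial points, needed to justify the rearrangement of the resulting double series. Everything else is routine assembly on top of the theorems of Sections~2 and~3.
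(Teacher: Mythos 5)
Your overall architecture is sound and, except for one step, coincides with the paper's: both proofs reduce everything to Theorems \ref{main}, \ref{main2}, \ref{Fourier} and Corollary \ref{norm}, and your treatment of the equivalence, of the formula $T=\text{\rm SOT-}\lim_{r\to 1}F(r{\bf W})$, and of the (complete) isometry of $\Phi$ is the same as the paper's (the paper gets the isometry from the von Neumann inequality, you get it from contractivity of ${\bf K}_{\bf X}$ plus Corollary \ref{norm}; these are interchangeable). The genuine divergence is in how the identity $F({\bf X})=\widetilde{\bf B}_{\bf X}[T]$ is obtained at an arbitrary ${\bf X}\in{\bf D}_{{\bf n},rad}^{\bf m}(\cH)$. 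You propose to redo the computation of \eqref{KK} with ${\bf K}_{\bf X}$ in place of ${\bf K}_{r{\bf W}}$, expanding the kernel, inserting the multi-Toeplitz matrix entries of $T$, and resumming via the purity identity ${\bf K}_{\bf X}^*{\bf K}_{\bf X}=I$. The paper avoids this entirely: it first applies $\widetilde{\bf B}_{\bf X}$ to the \emph{norm-convergent} operator $F(r{\bf W})$, where linearity and the intertwining ${\bf K}_{\bf X}X_{i,j}^*=({\bf W}_{i,j}^*\otimes I){\bf K}_{\bf X}$ give $\widetilde{\bf B}_{\bf X}[F(r{\bf W})]=F(r{\bf X})$ term by term, and only then lets $r\to 1$, using that $F(r{\bf W})\to T$ in SOT boundedly, that $Y\mapsto Y\otimes I_\cH$ is SOT-continuous on bounded sets, and that $F$ is continuous at ${\bf X}$ along the radius.

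The one place your write-up is not yet a proof is precisely the step you flag yourself: the regrouping of the double series $\sum_{\boldsymbol\alpha,\boldsymbol\beta}\langle T(x\otimes b_{\boldsymbol\alpha}^{(\bf m)}e_{\boldsymbol\alpha}),y\otimes b_{\boldsymbol\beta}^{(\bf m)}e_{\boldsymbol\beta}\rangle\langle \boldsymbol\Delta_{\bf X}^{\bf m}(I)^{1/2}X_{\boldsymbol\alpha}^*h,\boldsymbol\Delta_{\bf X}^{\bf m}(I)^{1/2}X_{\boldsymbol\beta}^*h'\rangle$ into $\sum_{(\boldsymbol\sigma,\boldsymbol\delta)\in\cJ}\sum_{\boldsymbol\gamma}$. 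In \eqref{KK} this is harmless because the test vectors are $e_{\boldsymbol\gamma},e_{\boldsymbol\omega}$ and only finitely many degree classes contribute (which is why the paper can replace $T$ by $T_q$ there); for a general pure ${\bf X}$ and general $h,h'\in\cH$ the double series is only conditionally convergent (as an iterated limit of the partial sums of ${\bf K}_{\bf X}(x\otimes h)$), and a Fubini-type rearrangement is not automatic. You either need to supply this — e.g.\ by first proving the identity for the truncations $T_q$ and controlling the tail using $\sup_q\|T_q\|\le\|T\|$ and the norm convergence of $\sum_{\boldsymbol\gamma}b_{\boldsymbol\gamma}^{(\bf m)}{}^2X_{\boldsymbol\gamma}\boldsymbol\Delta_{\bf X}^{\bf m}(I)X_{\boldsymbol\gamma}^*$ to $I$ — or, more economically, replace this computation by the paper's two-step limiting argument, which needs nothing beyond what Sections 2 and 3 already provide. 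With that step repaired, the rest of your proposal goes through.
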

\begin{proof}  Assume that  item (i) holds and let  $F$ have the representation
$$
F({\bf X})=\sum_{s_1\in \ZZ}\cdots \sum_{s_k\in \ZZ}
 q_{s_1,\ldots, s_k} ({\bf X}, {\bf X}^*),\qquad {\bf X}\in {\bf D}_{{\bf n},rad}^{\bf m}(\cH).
$$
Then $F(r{\bf W})=\sum_{s_1\in \ZZ}\cdots \sum_{s_k\in \ZZ}
 r^{|s_1|+\cdots+|s_k|}q_{s_1,\ldots, s_k}({\bf W}, {\bf W}^*)
$
is convergent in the operator norm topology and, due to the von Neumann inequality for polydomains (see \cite{Po-Berezin1}),
we have
$\sup_{r\in [0,1)}\|F(r{\bf W})\|=\|F\|$.
According to Theorem \ref{Fourier}, $F({\bf W})$ is the formal Fourier representation of a weighted multi-Toeplitz operator $T$ and
 $
T=\text{\rm SOT-}\lim_{r\to 1} F(r{\bf W}).
$
Due to Theorem \ref{main} the operator $T$ satisfies the Brown-Halmos condition and, Corollary \ref{norm} shows that
$\|T\|=\sup_{r\in [0,1)}\|F(r{\bf W})\|$. On the other hand, due to the properties of the Berezin transform, we have
$$
F(r{\bf X})=\widetilde{\bf B}_{{\bf X}}[F(r{\bf W})]=\left(I_\cK\otimes {\bf K}_{{\bf X}} ^*\right) (F(r{\bf W})\otimes I_\cH)\left(I_\cK\otimes{\bf K}_{{\bf X}}\right),\qquad r\in [0,1),  {\bf X}\in {\bf D}_{{\bf n},rad}^{\bf m}(\cH).
$$
Since the map $Y\mapsto Y\otimes I_\cH$ is SOT-continuous on bounded subsets of $B(\cK \otimes  \bigotimes_{i=1}F^2(H_{n_i}))$ and $F$ is continuous on  ${\bf D}_{{\bf n},rad}^{\bf m}(\cH)$, we obtain
$$
F({\bf X})=\text{\rm SOT-}\lim_{r\to 1} F(r{\bf X})=\left(I_\cK\otimes {\bf K}_{{\bf X}} ^*\right) (T\otimes I_\cH)\left(I_\cK\otimes{\bf K}_{{\bf X}}\right)=\widetilde{\bf B}_{\bf X}[T].
$$
Therefore, item (ii) holds. Conversely, assume that item (ii) is satisfied and let
$$
\varphi({\bf W}, {\bf W}^*): =\sum_{s_1\in \ZZ}\cdots \sum_{s_k\in \ZZ}
 q_{s_1,\ldots, s_k}({\bf W}, {\bf W}^*)
$$
be the formal Fourier representation of a weighted multi-Toeplitz operator  $T\in B(\cK\otimes \bigotimes_{s=1}^k  F^2(H_{n_s}))$ (see Theorem \ref{formal2}). According to Theorem \ref{Fourier}, $\varphi(r{\bf W}, r{\bf W}^*)$ is convergent in the operator norm topology and $T=\text{\rm SOT-}\lim_{r\to 1} \varphi(r{\bf W}, r{\bf W}^*)$.
Due to Theorem \ref{main}, $T$ satisfies the Brown-Halmos condition and, consequently,
$\|T\|=\sup_{r\in [0,1)}\|\varphi(r{\bf W}, r{\bf W}^*)\|$ (see Corollary \ref{norm}).
Consequently, the function ${\bf X}\mapsto \varphi({\bf X}, {\bf X}^*)$ is a free $k$-pluriharmonic on the radial poly-hyperball ${\bf D}_{{\bf n},rad}^{\bf m}(\cH)$.  According to relation \eqref{KK}, we have
\begin{equation*}
(I_\cK\otimes {\bf K}_{r{\bf W}}^*) (T\otimes I_{\otimes_{i=1}^kF^2(H_{n_{i}})})(I_\cK\otimes {\bf K}_{r{\bf W}})
=\sum_{(s_1,\ldots, s_k)\in \ZZ^k}q_{s_1,\ldots, s_k}(r{\bf W}, r{\bf W}^*)
\end{equation*}
for every $r\in [0,1)$. Since we assume   item (ii), we also have
$$
F(r{\bf W})=(I_\cK\otimes {\bf K}_{r{\bf W}}^*) (T\otimes I_{\otimes_{i=1}^kF^2(H_{n_{i}})})(I_\cK\otimes {\bf K}_{r{\bf W}}), \qquad r\in[0,1).
$$
Combining these relations, we obtain $\varphi(r{\bf W}, r{\bf W}^*)=F(r{\bf W})$ for every $r\in [0,1)$, which implies
$\varphi=F$.

To prove the last part of the theorem, let $[F_{ij}]_m\in M_m({\bf PH}_\cK^\infty({\bf D}_{{\bf n},rad}^{\bf m})$ be a matrix  and use the noncommutative von Neumann inequality for polydomains to obtain
$$
\|[F_{ij}]_m\|=\sup_{{\bf X}\in {\bf D}_{{\bf n},rad}^{\bf m}(\cH)}\|[F_{ij}({\bf X})]_m\|
=\sup_{r\in [0,1)}\|[F_{ij}(r{\bf W})]_m\|.
$$
On the other hand, $T_{ij}:=\text{\rm SOT-}\lim_{r\to 1} F_{ij}(r{\bf W})$ is a weighted multi-Toeplitz operator and
$$
 F_{ij}(r{\bf W})=(I_{\cK}\otimes {\bf K}_{r{\bf W}}^*)(T_{ij}\otimes I_{\otimes_{i=1}^kF^2(H_{n_{i}})})
(I_{\cE}\otimes {\bf K}_{r{\bf W}}),\qquad r\in[0,1).
$$
Hence, we obtain
$$
\sup_{r\in [0,1)}\|[F_{ij}(r{\bf W})]_m\|\leq \|[T_{ij}]_m\|.
$$
Since  $[T_{ij}]_m:=\text{\rm SOT-}\lim_{r\to 1} [F_{ij}(rW)]_m$, we deduce that the inequality above is   an equality. This shows that $\Phi$ is a completely isometric isomorphisms of operator spaces. The proof is complete.
\end{proof}

As a consequence, we can obtain the following Fatou type result concerning the boundary behaviour of bounded free $k$-pluriharmonic functions.

\begin{corollary}
If $F:{\bf D}_{{\bf n},rad}^{\bf m}(\cH)\to B(\cK)\otimes_{min} B( \cH)$ is a bounded free $k$-pluriharmonic function and ${\bf X}$ is a pure element in
${\bf D}_{{\bf n}}^{\bf m}(\cH)$, then the limit
$$
\text{\rm SOT-}\lim_{r\to 1} F(r{\bf X})
$$
exists.
\end{corollary}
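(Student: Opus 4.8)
The plan is to deduce this Fatou-type statement from Theorem~\ref{bounded} together with the $\text{\rm SOT}$-convergence $F(r{\bf W})\to T$ that is already established there. First, by Theorem~\ref{bounded} there is a weighted multi-Toeplitz operator $T\in\boldsymbol{\cT}$ with $F({\bf Y})=\widetilde{\bf B}_{\bf Y}[T]$ for every ${\bf Y}\in{\bf D}_{{\bf n},rad}^{\bf m}(\cH)$ and $T=\text{\rm SOT-}\lim_{r\to 1}F(r{\bf W})$; moreover, by inequality~\eqref{VN} (see also Corollary~\ref{norm}) one has $\|F(r{\bf W})\|\le\|T\|$ for all $r\in[0,1)$, so the family $\{F(r{\bf W})\}_{r\in[0,1)}$ is uniformly bounded. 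Note that for a pure ${\bf X}\in{\bf D_n^m}(\cH)$ and $r\in[0,1)$ the element $r{\bf X}$ lies in $r{\bf D_n^m}(\cH)\subset{\bf D}_{{\bf n},rad}^{\bf m}(\cH)$, so $F(r{\bf X})$ is defined.

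The key intermediate identity I would record is
$$
F(r{\bf X})=(I_\cK\otimes{\bf K}_{\bf X}^*)\bigl(F(r{\bf W})\otimes I_\cH\bigr)(I_\cK\otimes{\bf K}_{\bf X}),\qquad r\in[0,1),
$$
valid for every pure ${\bf X}\in{\bf D_n^m}(\cH)$. Here one uses that, ${\bf X}$ being pure, the noncommutative Berezin kernel ${\bf K}_{\bf X}$ is an isometry (property~(ii) of Section~1), and that the intertwining relation ${\bf K}_{\bf X}X_{i,j}^*=({\bf W}_{i,j}^*\otimes I){\bf K}_{\bf X}$ (property~(iii)) together with ${\bf K}_{\bf X}^*{\bf K}_{\bf X}=I_\cH$ forces $\widetilde{\bf B}_{\bf X}[C\otimes{\bf W}_{\boldsymbol\alpha}{\bf W}_{\boldsymbol\beta}^*]=C\otimes{\bf X}_{\boldsymbol\alpha}{\bf X}_{\boldsymbol\beta}^*$ for all $C\in B(\cK)$ and $(\boldsymbol\alpha,\boldsymbol\beta)\in\cJ$. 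Since the Fourier series $\varphi_T(r{\bf W},r{\bf W}^*)=\sum_{(s_1,\ldots,s_k)}r^{|s_1|+\cdots+|s_k|}q_{s_1,\ldots,s_k}({\bf W},{\bf W}^*)$ converges in the operator norm (Theorem~\ref{Fourier}) and $\widetilde{\bf B}_{\bf X}$ is completely contractive, applying $\widetilde{\bf B}_{\bf X}$ term by term gives $\widetilde{\bf B}_{\bf X}[F(r{\bf W})]=\varphi_T(r{\bf X},r{\bf X}^*)=F(r{\bf X})$, which is the displayed formula (this is exactly the computation used in the proof of Theorem~\ref{bounded}, only now for a pure, not necessarily radial, ${\bf X}$).

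Finally I would let $r\to 1$ in that identity. By Theorem~\ref{bounded}, $F(r{\bf W})\to T$ in the strong operator topology; since $\{F(r{\bf W})\}$ is norm-bounded and the amplification $Y\mapsto Y\otimes I_\cH$ is $\text{\rm SOT}$-continuous on bounded sets, we get $F(r{\bf W})\otimes I_\cH\to T\otimes I_\cH$ in $\text{\rm SOT}$, and multiplying on the left by the fixed operator $I_\cK\otimes{\bf K}_{\bf X}^*$ and on the right by $I_\cK\otimes{\bf K}_{\bf X}$ preserves $\text{\rm SOT}$-convergence. Hence
$$
\text{\rm SOT-}\lim_{r\to 1}F(r{\bf X})=(I_\cK\otimes{\bf K}_{\bf X}^*)(T\otimes I_\cH)(I_\cK\otimes{\bf K}_{\bf X})=\widetilde{\bf B}_{\bf X}[T],
$$
which proves the corollary (and identifies the boundary value as the extended Berezin transform of $T$ at ${\bf X}$). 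The one genuinely delicate point is the middle paragraph: checking that $\widetilde{\bf B}_{\bf X}$ really "evaluates" the norm-convergent symbol series at $r{\bf X}$ when ${\bf X}$ is pure but lies on the distinguished boundary rather than in the radial part — this is where purity, i.e.\ the identity ${\bf K}_{\bf X}^*{\bf K}_{\bf X}=I_\cH$, and the intertwining property (iii), rather than merely contractivity of ${\bf K}_{\bf X}$, are indispensable.
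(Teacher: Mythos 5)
Your proof is correct and is exactly the argument the paper intends: the corollary is stated without proof as a consequence of Theorem \ref{bounded}, and your reasoning reproduces the computation from that theorem's proof, correctly isolating the two facts that make it work for a pure boundary point ${\bf X}$ — that ${\bf K}_{\bf X}^*{\bf K}_{\bf X}=I_\cH$ together with the intertwining property gives $F(r{\bf X})=(I_\cK\otimes{\bf K}_{\bf X}^*)(F(r{\bf W})\otimes I_\cH)(I_\cK\otimes{\bf K}_{\bf X})$, and that the uniform bound $\|F(r{\bf W})\|\le\|T\|$ lets one pass the SOT-limit $F(r{\bf W})\to T$ through the amplification.
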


We denote by ${\bf PH}_\cK^c({\bf D}_{{\bf n},rad}^{\bf m}) $ the set of all
  free $k$-pluriharmonic functions on the radial part of ${\bf D_n^m}$ with operator-valued coefficients in $B(\cK)$, which
 have continuous extensions   (in the operator norm topology) to
  ${\bf D_n^m}(\cH)$, for every Hilbert space $\cH$.
In what follows we solve the Dirichlet extension problem for poly-hyperballs.

\begin{theorem}\label{Dirichlet}  If $F:{\bf D}_{{\bf n},rad}^{\bf m}(\cH)\to B(\cK)\otimes_{min} B( \cH)$, then
 the following statements are equivalent.
\begin{enumerate}
\item[(i)] $F$ is a free $k$-pluriharmonic function on the radial poly-hyperball ${\bf D}_{{\bf n},rad}^{\bf m}$
such that \ $F(r{\bf W})$ converges in the operator norm
topology, as $r\to 1$.

\item[(ii)]
There exists $T\in {\boldsymbol\cG}:=\text{\rm span}\left\{ C\otimes {\bf W}_{\boldsymbol\alpha}{\bf W}_{\boldsymbol\beta}^* :\   C\in B(\cK),   (\boldsymbol \alpha; \boldsymbol \beta)\in \cJ\right\}^{\|\cdot\|}$ such
that $$F({\bf X})= \widetilde{\bf B}_{\bf X}[T], \qquad
 {\bf X}\in {\bf D}_{{\bf n},rad}^{\bf m}(\cH).
  $$
   \item[(iii)] $F$ is a free $k$-pluriharmonic function on the radial poly-hyperball      ${\bf D}_{{\bf n},rad}^{\bf m}(\cH)$ which
 has a continuous extension  (in the operator norm topology) to the   poly-hyperball
  ${\bf D}_{{\bf n}}^{\bf m}(\cH)$.

\end{enumerate}
In this case, $T=\lim\limits_{r\to 1}F(r{\bf W})$, where
the convergence is in the operator norm. Moreover, the map
$$
\Phi:{\bf PH}_\cK^c({\bf D}_{{\bf n}}^{\bf m} ) \to \boldsymbol\cG
 \quad \text{ defined
by } \quad \Phi(F):=T
$$ is a  completely   isometric isomorphism of
operator spaces.
\end{theorem}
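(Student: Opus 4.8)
The plan is to reduce everything to the bounded case already handled in Theorem \ref{bounded} and to the norm-density of $\boldsymbol\cG$ (the norm closure of the algebraic span of the operators $C\otimes {\bf W}_{\boldsymbol\alpha}{\bf W}_{\boldsymbol\beta}^*$) inside the space of weighted multi-Toeplitz operators $\boldsymbol{\cT}$. The three implications will be organized cyclically.

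First I would prove (i)$\implies$(ii). Assume $F$ is free $k$-pluriharmonic on ${\bf D}_{{\bf n},rad}^{\bf m}$ and that $T:=\lim_{r\to 1}F(r{\bf W})$ exists in operator norm. Since each $F(r{\bf W})=\sum_{(s_1,\ldots,s_k)\in\ZZ^k} r^{|s_1|+\cdots+|s_k|}q_{s_1,\ldots,s_k}({\bf W},{\bf W}^*)$ lies in $\boldsymbol\cG$ (finite partial sums are in the algebraic span and the series converges in norm by the hypothesis that $F$ is $k$-pluriharmonic on the radial polyball), the norm limit $T$ is in $\boldsymbol\cG$. In particular $T$ is a bounded operator; it is a weighted multi-Toeplitz operator because $\boldsymbol{\cT}$ is norm-closed (indeed WOT-closed, Lemma \ref{monom2}) and contains each $F(r{\bf W})$. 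Now I would invoke Theorem \ref{bounded}: since $T\in\boldsymbol{\cT}$, the function ${\bf X}\mapsto\widetilde{\bf B}_{\bf X}[T]$ is a bounded free $k$-pluriharmonic function, and by the identity \eqref{KK} together with $F(r{\bf W})=\sum q_{s_1,\ldots,s_k}(r{\bf W},r{\bf W}^*)=(I_\cK\otimes{\bf K}_{r{\bf W}}^*)(T\otimes I)(I_\cK\otimes{\bf K}_{r{\bf W}})=\widetilde{\bf B}_{r{\bf W}}[T]$, and the uniqueness of the Fourier representation (Theorem \ref{formal2}), one concludes $F({\bf X})=\widetilde{\bf B}_{\bf X}[T]$ for all ${\bf X}\in{\bf D}_{{\bf n},rad}^{\bf m}(\cH)$. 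This gives (ii) and also the formula $T=\lim_{r\to1}F(r{\bf W})$ claimed in the last sentence.

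Next I would prove (ii)$\implies$(iii). Given $T\in\boldsymbol\cG$, first verify the claim for a single monomial $G=C\otimes{\bf W}_{\boldsymbol\alpha}{\bf W}_{\boldsymbol\beta}^*$: here $\widetilde{\bf B}_{\bf X}[G]=C\otimes{\bf X}_{\boldsymbol\alpha}{\bf X}_{\boldsymbol\beta}^*$, which is manifestly continuous in operator norm on all of ${\bf D}_{\bf n}^{\bf m}(\cH)$ (it is a noncommutative polynomial in ${\bf X},{\bf X}^*$ and the poly-hyperball elements are uniformly bounded in norm). By linearity the same holds for any $T$ in the algebraic span. For general $T\in\boldsymbol\cG$, pick polynomials $T_\ell$ in the span with $\|T-T_\ell\|\to0$; since the Berezin kernels ${\bf K}_{\bf X}$ are contractions, $\|\widetilde{\bf B}_{\bf X}[T]-\widetilde{\bf B}_{\bf X}[T_\ell]\|\le\|T-T_\ell\|$ uniformly over ${\bf X}\in{\bf D}_{\bf n}^{\bf m}(\cH)$ and over all $\cH$, so the functions ${\bf X}\mapsto\widetilde{\bf B}_{\bf X}[T_\ell]$ converge uniformly to ${\bf X}\mapsto\widetilde{\bf B}_{\bf X}[T]$; a uniform limit of norm-continuous functions is norm-continuous, giving the continuous extension to ${\bf D}_{\bf n}^{\bf m}(\cH)$. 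Restricting to the radial polyball and using Theorem \ref{bounded} to see that ${\bf X}\mapsto\widetilde{\bf B}_{\bf X}[T]$ is free $k$-pluriharmonic, (iii) follows. The implication (iii)$\implies$(i) is essentially immediate: a function continuous in operator norm on ${\bf D}_{\bf n}^{\bf m}(\cH)$, evaluated along $r{\bf W}$ (which lies in the polyball for $r<1$ and whose closure at $r=1$ is handled by continuity at ${\bf W}\in{\bf D}_{\bf n}^{\bf m}$, since ${\bf W}$ is a pure element of the polyball), yields a norm-convergent limit as $r\to1$.

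Finally, for the completely isometric isomorphism statement I would argue exactly as in the proof of Theorem \ref{bounded}: for a matrix $[F_{ij}]_m\in M_m({\bf PH}_\cK^c({\bf D}_{\bf n}^{\bf m}))$, the noncommutative von Neumann inequality for polydomains gives $\|[F_{ij}]_m\|=\sup_{r\in[0,1)}\|[F_{ij}(r{\bf W})]_m\|$, and since $[F_{ij}(r{\bf W})]_m=(I\otimes{\bf K}_{r{\bf W}}^*)([T_{ij}]_m\otimes I)(I\otimes{\bf K}_{r{\bf W}})$ with ${\bf K}_{r{\bf W}}$ an isometry, the right-hand side equals $\|[T_{ij}]_m\|$; combined with $[T_{ij}]_m=\lim_{r\to1}[F_{ij}(r{\bf W})]_m$ in norm this yields $\|[F_{ij}]_m\|=\|[T_{ij}]_m\|$, so $\Phi$ is a complete isometry; surjectivity onto $\boldsymbol\cG$ is the content of (ii)$\implies$(iii), and it is clearly a linear bijection. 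The main obstacle I anticipate is being careful about the passage to the boundary $r\to1$ in establishing that $T\in\boldsymbol\cG$ precisely when the radial limit of $F(r{\bf W})$ exists in norm — one must use that the partial sums of the homogeneous decomposition are norm-approximated by $F(r{\bf W})$ uniformly, which relies on the norm bound $\|q_{s_1,\ldots,s_k}({\bf W},{\bf W}^*)\|\le\|T\|$ from Theorem \ref{Fourier}(iv) and the norm-convergence of $\varphi(r{\bf W},r{\bf W}^*)$; everything else is a routine transcription of the arguments in Theorem \ref{bounded} with "SOT" and "WOT-closed span" replaced by "operator norm" and "norm-closed span."
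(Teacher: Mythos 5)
Your overall architecture follows the paper's: (i)$\Leftrightarrow$(ii) via the radial limits $F(r{\bf W})=\widetilde{\bf B}_{r{\bf W}}[T]$ and the norm-closedness of $\boldsymbol\cG$ (you are in fact more explicit than the paper about why $T\in\boldsymbol\cG$), and the complete-isometry statement via the von Neumann inequality exactly as in Theorem \ref{bounded}. The problem is in your proof of (ii)$\implies$(iii).

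The identity $\widetilde{\bf B}_{\bf X}[C\otimes {\bf W}_{\boldsymbol\alpha}{\bf W}_{\boldsymbol\beta}^*]=C\otimes {\bf X}_{\boldsymbol\alpha}{\bf X}_{\boldsymbol\beta}^*$ is \emph{not} valid on all of ${\bf D}_{\bf n}^{\bf m}(\cH)$: the intertwining relation ${\bf K}_{\bf X}{\bf X}_{i,j}^*=({\bf W}_{i,j}^*\otimes I){\bf K}_{\bf X}$ only gives $\widetilde{\bf B}_{\bf X}[C\otimes {\bf W}_{\boldsymbol\alpha}{\bf W}_{\boldsymbol\beta}^*]=C\otimes {\bf X}_{\boldsymbol\alpha}\,{\bf K}_{\bf X}^*{\bf K}_{\bf X}\,{\bf X}_{\boldsymbol\beta}^*$, and ${\bf K}_{\bf X}^*{\bf K}_{\bf X}=I$ only for \emph{pure} ${\bf X}$. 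All elements of the radial part are pure, but boundary points of ${\bf D}_{\bf n}^{\bf m}(\cH)$ need not be: for $k=n_1=m_1=1$ the polyball is the closed unit ball of $B(\cH)$, and at $X=I$ one has ${\bf K}_X^*{\bf K}_X=0$, so $\widetilde{\bf B}_I[SS^*]=0$ while $\lim_{r\to 1}\widetilde{\bf B}_{rI}[SS^*]=I$. Consequently ${\bf X}\mapsto\widetilde{\bf B}_{\bf X}[T_\ell]$ is \emph{not} norm-continuous up to the boundary, your uniform-limit argument does not produce a continuous function, and in any case ${\bf X}\mapsto\widetilde{\bf B}_{\bf X}[T]$ is not the continuous extension of $F$. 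The repair is either (a) replace $\widetilde{\bf B}_{\bf X}[T_\ell]$ by the genuine polynomial evaluation ${\bf X}\mapsto\sum C\otimes{\bf X}_{\boldsymbol\alpha}{\bf X}_{\boldsymbol\beta}^*$, which is norm-continuous on the closed polyball, agrees with $\widetilde{\bf B}_{\bf X}[T_\ell]$ on the radial part, and is uniformly Cauchy there by the von Neumann inequality $\sup_{\bf X}\|p({\bf X},{\bf X}^*)\|\le\|p({\bf W},{\bf W}^*)\|$; or (b), as the paper does, define the boundary value by $G({\bf Y}):=\lim_{r\to 1}\widetilde{\bf B}_{r{\bf Y}}[T]$ (so every Berezin transform is evaluated only at pure points) and prove continuity of $G$ from the uniform estimate $\|G({\bf Y})-F(t_0{\bf Y})\|\le\|T-F(t_0{\bf W})\|$ for $t_0$ close to $1$, combined with the continuity of ${\bf Y}\mapsto F(t_0{\bf Y})$. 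With either repair the rest of your argument goes through.
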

\begin{proof} We prove the equivalence of (i) with (ii).
Let $F$ have a representation
$$
F({\bf X})=\sum_{s_1\in \ZZ}\cdots \sum_{s_k\in \ZZ}
 q_{s_1,\ldots, s_k} ({\bf X}, {\bf X}^*),\qquad {\bf X}\in {\bf D}_{{\bf n},rad}^{\bf m}(\cH),
$$
where the series converge in the operator norm topology, such that
$T:=\lim_{r\to 1} F(r{\bf W})$ exists in the operator norm topology. Since
$$
F(r{\bf X})=\widetilde{\bf B}_{{\bf X}}[F(r{\bf W})]=\left(I_\cK\otimes {\bf K}_{{\bf X}} ^*\right) (F(r{\bf W})\otimes I_\cH)\left(I_\cK\otimes{\bf K}_{{\bf X}}\right),\qquad r\in [0,1), {\bf X}\in {\bf D}_{{\bf n},rad}^{\bf m},
$$
and taking $r\to 1$, we deduce that
$$
F({\bf X})=\left(I_\cK\otimes {\bf K}_{{\bf X}} ^*\right) (T\otimes I_\cH)\left(I_\cK\otimes{\bf K}_{{\bf X}}\right),
$$
which proves item (ii).

Conversely, assume that item (ii) holds. According to Theorem \ref{main} and Theorem \ref{main2}, the operator $T$ is a weighted multi-Toeplitz operator. Due to Theorem \ref{bounded}, the function $F$ defined by
$F({\bf X})=\widetilde{\bf B}_{{\bf X}}[T]$, ${\bf X}\in {\bf D}_{{\bf n},rad}^{\bf m}(\cH)$, is a bounded free
 $k$-pluriharmonic function and $\|T\|=\sup_{r\in [0,1)} \|F(r{\bf W})\|$.
Since $T\in {\boldsymbol\cG}$, we can find a sequence
$$
g_n\in \text{\rm span}\left\{ C\otimes {\bf W}_{\boldsymbol\alpha}{\bf W}_{\boldsymbol\beta}^* :\   C\in B(\cK),   (\boldsymbol \alpha; \boldsymbol \beta)\in \cJ\right\}
$$
such that $g_n\to T$ in norm as $n\to\infty$.
Let $\epsilon>0$ and choose $N$ such that $\|T-g_N\|<\epsilon$. Choose also $\delta\in (0,1)$ such that
$\|\widetilde{\bf B}_{r{\bf W}}[g_N]-g_N\|<\epsilon$ for every $r\in (\delta, 1)$.
Since
\begin{equation*}
\begin{split}
\|\widetilde{\bf B}_{r{\bf W}}[T] -T\|&\leq \|\widetilde{\bf B}_{r{\bf W}}[T-g_N]\|+\|\widetilde{\bf B}_{r{\bf W}}[g_N]-g_N\|+\|g_N-T\|\\
&\leq \|T-g_N\|+2\epsilon<3\epsilon,
\end{split}
\end{equation*}
for every $r\in (0,\delta)$, we deduce that $T=\lim_{r\to 1} \widetilde{\bf B}_{r{\bf W}}[T]$ in the norm topology.
Taking into account that $F(r{\bf W})=\widetilde{\bf B}_{r{\bf W}}[T]$, we conclude that $T=\lim_{r\to 1} F(r{\bf W})$ in the norm topology. Therefore, item (i) holds.
Since the implication (iii)$\implies (i)$ is clear, it remains to prove that (ii)$\implies$(iii). To this end, assume that item (ii) holds. According to Theorem \ref{bounded}, $F$ is a bounded free $k$-pluriharmonic function on the radial poly-hyperball.
Let ${\bf Y}\in {\bf D}_{{\bf n}}^{\bf m}(\cH)$ and note that,  as in the proof of the implication (ii)$\implies(i)$, one can show that
$G(Y):=\lim_{r\to 1}\widetilde{\bf B}_{r{\bf Y}}[T]$
exists in the operator norm and $\|G({\bf Y})\|\leq \|T\|$.
Note that $G$ is an extension of $F$. It remains to prove that
$G$ is continuous on ${\bf D}_{{\bf n}}^{\bf m}(\cH)$.
Due to the equivalence of (i) with (ii) and its proof, we have $T=\lim_{r\to 1} F(r{\bf W})$ in norm. Consequently, if $\epsilon>0$, we can find $t_0\in (0,1)$ such that $\|T-F(t_0{\bf W})\|<\epsilon$. Since $T-F(t_0{\bf W})\in \boldsymbol\cG$, we have
\begin{equation*}
\begin{split}
\|G({\bf Y})-F(t_0 {\bf Y})\|&= \|\lim_{r\to 1}\widetilde{\bf B}_{r{\bf Y}}[T]-F(t_0{\bf Y})\|\\
&\leq \limsup_{r\to 1}\| \widetilde{\bf B}_{r{\bf Y}}[T-F(t_0{\bf W})]\|\leq \|T-F(t_0{\bf W})\|<\epsilon
\end{split}
\end{equation*}
for every ${\bf Y}\in {\bf D}_{{\bf n}}^{\bf m}(\cH)$. Due to the continuity of $F$  on ${\bf D}_{{\bf n},rad}^{\bf m}(\cH)$, there is $\delta>0$ such that $\|F(t_0{\bf Y})-F(t_0{\bf Z})\|<\epsilon$ for every ${\bf Z}\in {\bf D}_{{\bf n}}^{\bf m}(\cH)$ with $\|{\bf Z}-{\bf Y}\|<\delta$.
Since
$$
\|G({\bf Y})-G({\bf Z})\|\leq \|G({\bf Y})-F(t_0{\bf Y})\|+\|F(t_0{\bf Y})-F(t_0{\bf Z})\|+\|F(t_0{\bf Z})-G({\bf Z})\|<\epsilon
$$
for every  for every ${\bf Z}\in {\bf D}_{{\bf n}}^{\bf m}(\cH)$ with $\|{\bf Z}-{\bf Y}\|<\delta$, the proof is complete.
\end{proof}

      \bigskip

       %


\begin{thebibliography}{99}

















 %




















\bibitem{BS} {\sc A.~Bottcher and B.~Silbermann}, {\it Analysis of Toeplitz operators}, Springer-Verlag, Berlin, 1990.

\bibitem{BH}{\sc A.~Brown  and  P.R.~Halmos},  Algebraic properties of Toeplitz operators, {\it J. Reine Angew. Math.} {\bf 213} 1963/1964 89--102.

















\bibitem{DP2} {\sc K.~R.~Davidson and D.~Pitts},
The algebraic structure of non-commutative analytic Toeplitz algebras,
{\it  Math. Ann.}
   {\bf 311} (1998),  275--303.


\bibitem{DKP}  {\sc K.R.~Davidson, E.~Katsoulis, and D.~Pitts},
  The structure of free semigroup algebras,
 {\it J. Reine Angew. Math.}
  {\bf 533} (2001), 99--125.



\bibitem{DLP} {\sc K.~R.~Davidson, J.~Li, and D.R.~Pitts},
  Absolutely
continuous representations and a Kaplansky density theorem for free
semigroup algebras, {\it J. Funct. Anal.} {\bf 224} (2005), no. 1,
160--191.

\bibitem{Dou} {\sc   R. G.~Douglas}, {\em Banach algebra techniques in operator theory}, Second edition. Graduate Texts in Mathematics, {\bf 179}, Springer-Verlag, New York, 1998. xvi+194 pp.
















\bibitem{ER} {\sc E.G.~Effros and Z.J.~Ruan},
  {\em Operator spaces},
 London Mathematical Society Monographs. New Series, {\bf 23}.
 The Clarendon Press, Oxford University Press, New York, 2000.





\bibitem{EL} {\sc J.~Eschmeier and S.~Langend\" orfer}, Toeplitz operators with pluriharmonic symbol, preprint 2017.





\bibitem{HKZ} {\sc H.~Hedenmalm, B.~ Korenblum, and K.~Zhu},  {\it Theory of Bergman spaces}, Graduate Texts in Mathematics, {\bf 199}, Springer-Verlag, New York, 2000. x+286 pp.










\bibitem{K} {\sc I.~Katznelson} {\em An introduction to harmonic analysis}, Cambridge University Press, Cambridge, 2004.

    \bibitem{Ken1} {\sc M.~ Kennedy}, Wandering vectors and the reflexivity of free semigroup algebras, {\it J. Reine Angew. Math.} {\bf 653} (2011), 47--73.

\bibitem{Ken2} {\sc M.~ Kennedy}, The structure of an isometric tuple, {\it  Proc. Lond. Math. Soc.} (3) {\bf 106} (2013), no. 5, 1157--1177.



     \bibitem{LO} {\sc I.~Louhichi and A.~Olofsson}, Characterizations of Bergman space Toeplitz operators with harmonic symbols, {\it J.Reine Angew. Math.} {\bf 617} (2008), 1--26.

     \bibitem{MSS} {\sc  A.~Maji, J.~ Sarkar and S.~ Sarkar},  Toeplitz and asymptotic Toeplitz operators on $H^2(\DD^n)$, {\it  Bull. Sci. Math.}  {\bf 146} (2018), 33--49.




















\bibitem{Po-multi} {\sc G.~Popescu},
 Multi-analytic operators and some  factorization theorems,
 {\it Indiana Univ. Math.~J.}
 {\bf 38} (1989),   693--710.










      \bibitem{Po-analytic} {\sc G.~Popescu},
      {Multi-analytic operators on Fock spaces,}
      {\it Math. Ann.} {\bf 303} (1995), 31--46.










      \bibitem{Po-poisson} {\sc G.~Popescu},
     {Poisson transforms on some $C^*$-algebras generated by isometries,}
       {\it J. Funct. Anal.} {\bf 161} (1999),  27--61.
















\bibitem{Po-entropy} {\sc G.~Popescu}, Entropy and multivariable interpolation, {\it Mem. Amer. Math. Soc.}  {\bf 184} (2006), no. 868, vi+83 pp.





\bibitem{Po-pluriharmonic} {\sc G.~Popescu},
{Noncommutative transforms and free pluriharmonic functions},
 {\it Adv. Math.} {\bf 220} (2009), 831-893.

 \bibitem{Po-domains-models} {\sc G.~Popescu}, Noncommutative Berezin transforms and multivariable operator model theory, {\it  J. Funct. Anal.}  {\bf 254}  (2008),  no. 4, 1003--1057.



\bibitem{Po-domains} {\sc  G.~Popescu},
Operator theory on noncommutative domains, {\it Mem. Amer. Math. Soc.}  {\bf 205}  (2010),  no. 964, vi+124 pp.

\bibitem{Po-Berezin2} {\sc  G.~Popescu}, Berezin transforms on noncommutative varieties in polydomains, {\it  J. Funct. Anal.}  {\bf 265}  (2013),  no. 10, 2500--2552.



\bibitem{Po-Berezin1} {\sc  G.~Popescu}, Berezin transforms on noncommutative polydomains, {\it Trans. Amer. Math. Soc.}  {\bf 368}  (2016),  no. 6, 4357--4416.





     \bibitem{Po-Toeplitz} {\sc G.~Popescu},   Multi-Toeplitz operators and free pluriharmonic functions,  {\it J. Math. Anal. Appl.}  {\bf 478} (2019), no. 1, 256--293.





\bibitem{RR} {\sc M.~Rosenblum and J.~Rovnyak}, {\it Hardy classes and operator theory}, Oxford University Press-New York,  1985.




%









\bibitem{T} {\sc O.~Toeplitz}, Zur Theorie der quadratischen und bilinearen Formen von unendlichvielen Ver\" anderlichen,  {\it Math. Ann.}  {\bf 70}  (1911),  no. 3, 351--376.

\bibitem{U} {\sc H.~Upmeier}, {\it Toeplitz operators and index theory in several complex variables}, Operator Theory: Advances and Applications  {\bf 81}, Birkhauser Verlag, Basel, 1996.
        %


       \end{thebibliography}
      \end{document}